\documentclass[11pt,reqno]{amsart}
\usepackage{amssymb,latexsym}
\usepackage{amsmath,color}
\usepackage{amsthm}
\usepackage{epsfig}
\usepackage{graphicx}
\usepackage{hyperref}
\usepackage{titletoc}
\usepackage[numbers,sort&compress]{natbib}
\numberwithin{equation}{section}
\newtheorem{theorem}{Theorem}[section]
\newtheorem{proposition}[theorem]{Proposition}
\newtheorem{lemma}[theorem]{Lemma}
\newtheorem{corollary}[theorem]{Corollary}

\newtheorem{definition}{Definition}[section]
\topmargin=-0.20in
\textheight=23.5cm
\textwidth=165mm
\oddsidemargin=0.10in
\evensidemargin=0.10in

\theoremstyle{definition}

\newtheorem{remark}{Remark}[section]

\def\XXint#1#2#3{{\setbox0=\hbox{$#1{#2#3}{\int}$}
\vcenter{\hbox{$#2#3$}}\kern-.5\wd0}}

\def\R{\mathbb{R}_+^{n+1}}

\def\s{\lambda}
\def\chi{p}

\def\e{\varepsilon}

\def\R{\mathbb{R}}

\def\C{C}
\def\te{\sigma}

\title[Boundary spike-layer solutions of the singular Keller-Segel system]{Boundary spike-layer solutions of the singular Keller-Segel system: existence, profiles and stability}

\author[J. Carrillo, J. Li, Z.-A. Wang, W. Yang]{Jose A. Carrillo$^{\star}$, Jingyu Li$^{\dagger}$, Zhi-An Wang$^{\ddagger}$, Wen Yang$^{\sharp}$}

\subjclass[2020]{35K57, 35Q92, 92D25}
\keywords{Boundary layer; singularity; nonlocal problem; boundary curvature; nonlinear stability}

\begin{document}
\footnotetext[1]{Mathematical Institute, University of Oxford, Oxford OX2 6GG, UK; carrillo@maths.ox.ac.uk}
\footnotetext[2]{School of Mathematics and Statistics, Northeast Normal University, Changchun, 130024, P.R. China; lijy645@nenu.edu.cn}
\footnotetext[3]{Department of Applied Mathematics, The Hong Kong Polytechnic University, Hung Hom,  Hong Kong; mawza@polyu.edu.hk}
\footnotetext[4]{Department of Mathematics, Faculty of Science and Technology, University of Macau, Taipa, Macau; wenyang@um.edu.mo}

\begin{abstract}
This paper is concerned with the boundary-layer solutions of the singular Keller-Segel model proposed in \cite{KS} in a multi-dimensional domain describing the chemotactic movement of cells up to the concentration gradient of the nutrient consumed by cells, where the zero-flux boundary condition is imposed to the cell while inhomogeneous Dirichlet boundary condition to the nutrient. The steady-state problem of the Keller-Segel system is reduced to a scalar Dirichlet nonlocal elliptic problem with singularity. Studying this nonlocal problem, we obtain the unique steady-state solution which possesses a boundary spike-layer profile as nutrient diffusion coefficient $\e>0$ tends to zero. When the domain is radially symmetric, we find the explicit expansion for the slope of boundary-layer profiles at the boundary and boundary-layer thickness in terms of the radius as $\e>0$ is small, which pinpoints how the boundary curvature affects the boundary-layer profile and thickness. Furthermore, we establish the nonlinear exponential stability of the boundary-layer steady-state solution for the radially symmetric domain. The main challenge encountered in the analysis is that the singularity will arise when the nutrient diffusion coefficient $\e>0$ is small for both stationary and time-dependent problems. By relegating the nonlocal steady-state problem to local problems and performing a delicate analysis using the barrier method and Fermi coordinates, we can obtain refined estimates for the solution of local steady-state problem near the boundary. This strategy finally helps us to find the asymptotic profile of the solution to the nonlocal problem as $\e \to 0$ so that the singularity is accurately captured and hence properly handled to achieve our results. For the time-dependent problem in the radially symmetric domain, we perform a change of variable to remove the singularity allowing us to finally establish the nonlinear stability of the boundary-layer steady-state solution. We make use of the equation satisfied by the relative radial mass distribution function to the steady state and sophisticated time-weighted energy estimates.
\end{abstract}

\maketitle
\section{introduction}
Proposed in \cite{KS}, the well-known Keller-Segel model with logarithmic chemotactic sensitivity in a smooth bounded domain $\Omega\subset \R^n$ reads as
 \begin{equation}\label{KS0}
	\begin{cases}
		u_t=\Delta u-\nabla\cdot(\chi u\nabla\log w)\quad &\mbox{in}~\Omega,\\
		w_t=\varepsilon\Delta w - uw^\beta&\mbox{in}~\Omega,
 	\end{cases}
\end{equation}
where $u(x,t)$ denotes the bacterial density and $w(x,t)$ the chemical (oxygen or nutrient) concentration at position $x\in \Omega$ and time $t\geq 0$. $\varepsilon\geq 0$ is the chemical diffusion coefficient, $\chi> 0$ denotes the chemotactic coefficient and $\beta\geq0$ the chemical consumption rate. The most prominent feature of the Keller-Segel model \eqref{KS0} lies in the logarithmic sensitivity $\log w$ which leads to a singular at $w=0$. The logarithmic sensitivity was used by Keller and Segel in \cite{KS} based on the Weber-Fechner law to explain the propagation of traveling bands driven by the {E}scherichia {C}oli bacterial chemotaxis observed in the celebrated experiment of Adler \cite{Adler66}, but later was employed to describe many other important biological processes such as the initiation of angiogenesis \cite{LSN, levine2001mathematical}, boundary movement of chemotactic bacteria \cite{Nossal72}, reinforced random walks \cite{LeSl, OS}, boundary layer formation of bacterial chemotaxis \cite{Carrillo,Tuval}, and so on. The mathematical derivation of logarithmic sensitivity was given in \cite{LeSl, OS} based on the random-walk framework. It was experimentally confirmed in \cite{Kalinin} that {E}scherichia {C}oli bacteria use logarithmic sensing for chemotactic movement.

Mathematically the singular logarithmic sensitivity was necessary generate traveling wave solutions from the system \eqref{KS0} (cf. \cite{KS,LuiWang}).
This singularity brings various challenges to analysis but attracts immense attention due to the interest in its own right.   Up to now, most studies are limited to the case $\beta=1$ except the existence of traveling wave solutions (cf. \cite{Wang-DCDSB-review}) and a recent work \cite{Carrillo} on the boundary-layer solutions in one dimension. When $\beta=1$, a clever Cole-Hopf type transformation \cite{LeSl} can be used to remove the singularity. This stimulated massive interesting works, for example the stability of traveling waves, see \cite{Davis, JLW, LW09,LLW, Chae, Choi2} for instance,   global well-posedness of solutions, see \cite{Choi1,Li-Pan-Zhao,peng-ruan-zhu2012global,MWZ,zhang-zhu2007,LPZ,LZ,Wang-Zhao13,TWW} in one-dimensional bounded domain with various boundary conditions or $\R$, and \cite{LLZ,Hao,DL,PWZ,Rebholz,WXY,LPZ,WWZ} in multidimensional spaces, just to mention a few. Among other things, this paper will be focused on the boundary-layer solutions of \eqref{KS0} and hence will only review the relevant results in this direction.

The observation of boundary-layer formation driven by chemotaxis was first reported in \cite{Tuval} where the chemotaxis model was coupled to fluid dynamics, with numerical studies followed in \cite{chertock2012sinking}. The analytical result of boundary-layer solutions of \eqref{KS0} was not available until in \cite{HWZ,HLWW,HWJMPA} where the Neumann boundary condition was imposed. It was shown therein that the (spatial) gradient of the solution instead of the solution itself possessed boundary-layer profile near the boundary. This is not consistent with the experimental observation of \cite{Tuval} where the model was imposed by the zero-flux boundary condition for $u$ while Dirichlet boundary condition for $w$. Therefore the authors \cite{Carrillo} later considered the Keller-Segel system \eqref{KS0} with $\beta\geq 0$ in the half-space $\R^+=[0,\infty)$ endowed with the zero-flux boundary condition for $u$ while Dirichlet boundary condition for $w$ at the boundary $x=0$. The unique steady-state boundary-layer solution was explicitly obtained and shown to be locally asymptotically stable. The work \cite{Carrillo} took advantage of the fact that the steady-state problem of \eqref{KS0} in $\Omega=[0,\infty)$ can be explicitly solved and hence the vanishing limit of the solution as $\varepsilon \to 0$ can be determined. In addition, the technique of taking anti-derivative or working at the level of the mass distribution function can be used in one dimension to establish the stability of the boundary-layer solution. All these advantages can only be used for one-dimensional space, and therefore the multi-dimensional problem still remains open. The main goal of the present work is to fill this gap and consider the singular Keller-Segel system with physical mixed zero-flux and Dirichlet boundary conditions
\begin{equation}\label{KS}
	\begin{cases}
		u_t=\Delta u-\nabla\cdot(\chi u\nabla\log w)\quad &\mbox{in}~\Omega,\\
		w_t=\varepsilon\Delta w - uw&\mbox{in}~\Omega,\\
		(\nabla u-\chi u\nabla\log w)\cdot \nu=0,~w=b\quad &\mbox{on}~\partial\Omega,\\
(u,w)(x,t)=(u_0,w_0)(x)&\mbox{in}~\Omega,
	\end{cases}
\end{equation}
where $b>0$ is a positive constant denoting the boundary value of $w$, and $\nu$ is the unit outer normal vector of $\partial\Omega$. We note that in this paper we consider the case $\beta=1$ only to avoid excessive technicalities.
We start with the steady-state (stationary) problem of \eqref{KS}. First we remark that the integration of the first equation of \eqref{KS} immediately gives
$$\int_\Omega u(x,t)dx=\int_\Omega u_0(x)dx:=m$$
which entails that the mass of $u$ is preserved, denoted by $m>0$, where $u_0\geq (\not \equiv 0)$ denotes the initial value of $u$.

Then the steady-state solutions of \eqref{KS}, denoted by $({U},~{W})$, satisfy
\begin{equation}
	\label{2}
	\begin{cases}
		\Delta {U}-\nabla\cdot(\chi {U}\nabla\log {W})=0\quad &\mbox{in}~\Omega,\\
		\varepsilon\Delta {W} - {U}{W}=0 &\mbox{in}~\Omega,\\
		(\nabla {U}-\chi {U}\nabla\log {W})\cdot \nu=0,~{W}=b\quad &\mbox{on}~\partial\Omega,\\
        \int_\Omega U(x)dx=m.
	\end{cases}
\end{equation}
 Multiplying the first equation of \eqref{2} by $\ln U-\chi \ln W$, and integrating the equation on $\Omega$, we have
\begin{equation}\label{new2.2}
\int_{\Omega} U|\nabla(\ln U-\chi\ln W)|^2dx=0.
\end{equation}
Since we are interested in the nonnegative solutions, we have $U(x)\geq0$ and $W(x)\geq0$ for any $x\in\overline{\Omega}$. Applying the strong maximum principle to the second equation of  \eqref{2}, we have $W(x)>0$ for any $x\in\overline{\Omega}$. We next write the first equation of  \eqref{2} as
\[-\Delta U+\chi\nabla U\frac{\nabla W}{W}+\chi U^2=\chi U\frac{|\nabla W|^2}{W^2}\geq0.\]
Then by the strong maximum principle and the Hopf's boundary point lemma along with the fact $\int_{\Omega} Udx=m$, one has $U(x)>0$ for all $x\in \overline{\Omega}$. Thus, it follows from \eqref{new2.2} that
\[\ln U-\chi\ln W=c_0\]
for an arbitrary constant $c_0$. Therefore, we get a constant $\lambda=e^{c_0}>0$ such that
\begin{equation}
	\label{3}
	{U}=\s{W}^\chi, \ \ \s=\frac{m}{\int_\Omega W^\chi dx},
\end{equation}
where the constant $\s=\frac{m}{\int_\Omega W^\chi dx}$ is due to the mass constraint in \eqref{2}.
Then the second equation of \eqref{2} can be rewritten as a nonlocal problem as follows:
\begin{equation}\label{4}
	\begin{cases}
		\varepsilon \Delta {W}=\displaystyle \frac{m}{\int_\Omega W^\chi dx} {W}^{\chi+1}\quad &\mbox{in}\quad \Omega,\\[3mm]
		{W}=b>0\quad &\mbox{on}\quad \partial \Omega.
	\end{cases}
\end{equation}
 As such the steady-state problem \eqref{2} is reduced to a scalar nonlocal problem \eqref{4} with \eqref{3}. That is, the existence of solutions to \eqref{2} is equivalent to the existence of solutions to \eqref{3}-\eqref{4}. The Keller-Segel system \eqref{KS} was considered in a one-dimensional half space $\Omega=[0,\infty)$ in our previous work \cite{Carrillo} and an explicit expression of the unique solution $W$ of \eqref{4} was found. By taking the advantage of this explicit formula, the steady-state solution $(U,W)$ was shown to possess a boundary spike-layer profile (i.e. $U$ is a Dirac mass concentrated at the boundary $x=0$ and $W$ is a boundary-layer profile) as $\e \to 0$ and be nonlinearly asymptotically stable as time tends to infinity. However, the explicit solution of \eqref{4} can not be obtained in a multi-dimensional domain.  As far as we know,  the multi-dimensional problem of \eqref{KS} or \eqref{4} remains open. This paper aims to study the nonlocal elliptic problem \eqref{4} and the time-dependent problem \eqref{KS} in multi-dimensions. Before proceeding, we recall a related nonlocal problem considered in \cite{Lee} given by
 \begin{equation}\label{4n}
	\begin{cases}
		\varepsilon \Delta {W}=\displaystyle \frac{m}{\int_\Omega \mathrm{e}^W dx} W \mathrm{e}^W\quad &\mbox{in}\quad \Omega,\\[3mm]
		{W}=b>0\quad &\mbox{on}\quad \partial \Omega,
	\end{cases}
\end{equation}
which results from the steady-state problem of the Keller-Segel model with linear chemotactic sensitivity (i.e. replacing $\log w$ in \eqref{KS} by $w$ with $p=1$). It was shown in \cite{Lee} that the nonlocal problem \eqref{4n} admits a unique solution which is a boundary-layer profile as $\e \to 0$. When $\Omega$ is radially symmetric, how the boundary curvature influences the boundary-layer profile and thickness was studied in \cite{Lee}.

Compared to \eqref{4n}, the nonlocal problem \eqref{4} is significantly more difficult to study.  A major difference in the analysis lies in the nonlocal term in \eqref{4} which may become singular (i.e. $\int_\Omega W^p dx \to 0$) as $\varepsilon \to 0$ and hence brings considerable difficulties to study the $\e$-vanishing limit of solutions, while the nonlocal term $\int_\Omega \mathrm{e}^W dx$ in \eqref{4n} has an inherent  positive lower bound.  Nevertheless, some ideas developed in \cite{Lee} like the barrier method and the use of the Beltrami Laplacian representing the Euclidean Laplacian strongly inspire a part of the present analysis. This paper has multiple goals as outlined below.
 \begin{enumerate}
 \item[(G1)] Establish the existence and uniqueness of the solution to \eqref{4} in any dimension $n\geq 1$, denoted by $W_\e(x)$, which along with \eqref{3} yields a unique steady-state solution $(U_\e, W_\e)$ of \eqref{KS} satisfying \eqref{2} (see Theorem \ref{th1.1});
 \item[(G2)] Prove that the solution $(U_\e, W_\e)$ is a boundary spike-layer profile as $\e \to 0$, namely $U_\e$ converges to a Dirac mass concentrate at the boundary and $W_\e$ to a boundary layer profile with boundary-layer thickness at the order of $\e$ as $\e \to 0$ (see Theorem \ref{th-existence});
 \item[(G3)] Explore how the boundary curvature affects the boundary-layer profile and thickness when the domain is radially symmetric (see Theorem \ref{radial});
 \item[(G4)] Prove the nonlinear stability of the unique boundary-layer profile $(U_\e, W_\e)$ of Keller-Segel system \eqref{KS} (see Theorem \ref{th-stability}) in the radially symmetric domain.
 \end{enumerate}
\vspace{0.2cm}

\noindent {\bf Strategy of the proofs and Structure of the paper}. As already mentioned, our current polynomial nonlinearity in \eqref{4} is much more difficult to deal with compared to the exponential nonlinearity, since estimating the nonlocal term $\int_\Omega W^p dx$ is significantly more challenging as it is not bounded from below by a positive constant. To overcome this difficulty, we first relegate the nonlocal problem to a local problem and compute the integral $\int_\Omega W^pdx$ based on the constructed sub- and super-solutions for the local problem. It is crucial to observe that as the diffusion coefficient $\e$ tends to zero, the integrals of the sub- and super-solutions share the same leading-order term. This allows us to determine the leading-order behavior of the nonlocal integral for the small $\e>0$. Once this relationship is established, we can effectively treat the nonlocal problem as a local one and extract further quantitative estimates on the solution needed for our purpose. We briefly further elaborate these ideas below.

For our goal (G1), inspired by the idea of \cite{Lee}, we first consider the nonlocal problem \eqref{4} by replacing the term ${m}/{\int_\Omega W^p dx}$ with a constant $\lambda$ and reducing \eqref{4} to a local problem parameterized by $\lambda$. Then we establish the existence and uniqueness of the solution to the local problems by the standard monotone iteration scheme and comparison arguments, a similar strategy is used in \cite{Ca98}. By studying the continuity and the asymptotic limits of $\int_\Omega W^p dx$ with respect to the parameter $\lambda$, we finally obtain the unique solution for the nonlocal elliptic problem \eqref{4}, and as a consequence of \eqref{2} too, via a fixed point argument in the parameter $\lambda$ (see Theorem \ref{th1.1} and its proof in Section 3).

To study the asymptotic behavior of the solution to \eqref{4} near the boundary as $\e \to 0$, i.e. our goal (G2), we treat the product of the diffusion coefficient $\e$ and the integral $\int_\Omega W^pdx$ as a unified diffusion coefficient and reformulate the nonlocal problem as a local one. Then we derive some quantitative results that describe the asymptotic behavior of the reformulated local problem near the boundary by constructing sharp sub-solutions and super-solutions based on the barrier method (cf. \cite{sattinger2006topics}) and a representation of Laplacian operator in terms of Fermi coordinates proved in our previous work \cite{Lee}. Then based on these results, we can find how the integral $\int_\Omega W^p dx$ in the nonlocal problem \eqref{4} depends on $\e$ and finally identify the asymptotic profile of $W$ as $\e \to 0$, which is shown to be a boundary-layer profile with boundary-layer thickness of the order of $\e$. Further with \eqref{3}, we show that $U_\e$ is a Dirac mass concentration at the boundary as $\e \to 0$ (see Theorem \ref{th-existence} with its proof in Section 4 and Remark \ref{bsl}).

In (G3), we are devoted to investigating how the boundary curvature affects the boundary-layer profile and thickness. This turns out to be a very difficult problem due to the nonlocal singular term. Hence we compromise to consider a radially symmetrical domain where the boundary curvature is the reciprocal of the radius.  For the radially symmetric domain, we can gain more detailed estimates on the integral $\int_\Omega W^p dx$ and find the expansion for the slope of the solution on the boundary in terms of the radius as $\e>0$ is small, and further identify how the boundary-layer thickness depends on the radius. It turns out such expansion is not a regular one as expected, but a non-regular one  (see Theorem \ref{radial}-(i) with its proof in Section 5). This is due to the singularity of the nonlocal term in \eqref{4}. In the radially symmetric case, we further can find the explicit asymptotic behavior of the solution (see Theorem \ref{radial}-(ii)) as $p \to \infty$ (strong chemotaxis) which was not found for the general domain. Based on our findings for the radially symmetrical domain, we conjecture that the boundary-layer thickness for the general domain is proportional to the boundary curvature times the volume of the domain, which remains unproved in our present work but has been verified by numerical simulations (see Figure \ref{fig1} and Figure \ref{fig2}).

Finally, we consider our goal (G4) concerning the stability of the unique boundary spike-layer profile $(U_\e, W_\e)$ for the time-dependent problem \eqref{KS}. There are two challenging issues to study the stability. The first one is the singularity at $w=0$, and the second one is that the Dirichet boundary condition on $w$ is insufficient to gain the regularity of gradient of $w$ required by the first equation of \eqref{KS}. To overcome the first barrier, we use a change of variable to transform the problem \eqref{KS} into a new one \eqref{trans} without singularity. To  overcome the second one, we consider the radially symmetrical domain to employ the technique of taking anti-derivative or the radial mass distribution function reducing the order of the first equation of \eqref{trans} so that the Dirichet boundary condition can be fully used. Then we perform sophisticated time-weighted energy estimates to obtain the nonlinear and exponential stability of $(U_\e, W_\e)$ for the radially symmetric domain (see Theorem \ref{th-stability} and its proof in Section 6).
\vspace{0.2cm}

\section{Main results and conjectures}
The first result concerning the existence and uniqueness of solutions is the following.

\begin{theorem}
\label{th1.1}
Let $\Omega$ be a bounded smooth domain in $\mathbb{R}^n~(n\ge1)$ with smooth boundary and let $m$ and $b$ be given positive constants. Then for any $\e>0$, the nonlocal problem \eqref{4} admits a unique positive classical solution $W_\e\in C^1(\overline{\Omega})\cap C^\infty(\Omega)$ and hence the Keller-Segel system \eqref{KS} admits a unique positive classical steady state $(U_\e, W_\e)\in [C^1(\overline{\Omega})\cap C^\infty(\Omega)]^2$ satisfying \eqref{3}.
\end{theorem}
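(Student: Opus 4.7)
The plan is to reduce the nonlocal problem \eqref{4} to a one-parameter family of local Dirichlet problems and then close the loop by a one-dimensional fixed-point argument in the parameter, in the spirit of \cite{Ca98,Lee}. Fix $\e>0$. For each $\lambda>0$ I first solve the local auxiliary problem $\e\Delta W=\lambda W^{p+1}$ in $\Omega$ with $W=b$ on $\partial\Omega$. The constant $b$ is a super-solution and $0$ a sub-solution, so a standard monotone iteration with Schauder estimates produces a classical solution $W_\lambda\in C^{2,\alpha}(\overline\Omega)$ with $0\le W_\lambda\le b$, and the strong maximum principle applied to the rewritten equation $-\Delta W_\lambda+(\lambda/\e)W_\lambda^{p}\cdot W_\lambda=0$ gives $W_\lambda>0$. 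Uniqueness of $W_\lambda$ follows from an interior-maximum argument on $W_1-W_2$, and the same comparison yields the strict monotonicity $\lambda_1<\lambda_2\Rightarrow W_{\lambda_1}>W_{\lambda_2}$ since $W_{\lambda_1}$ is a strict super-solution for the $\lambda_2$-problem; continuous dependence $\lambda\mapsto W_\lambda$ in $C(\overline\Omega)$ then follows from uniform Schauder bounds together with uniqueness.

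Set $F(\lambda)=\int_\Omega W_\lambda^{p}\,dx$. The crucial step is to prove that $\lambda\mapsto \lambda F(\lambda)$ is strictly increasing, which upgrades existence to uniqueness at the nonlocal level. Differentiating the local equation in $\lambda$, the derivative $Z=\partial_\lambda W_\lambda$ satisfies
\begin{equation*}
LZ:=\e\Delta Z-(p+1)\lambda W_\lambda^{p}Z=W_\lambda^{p+1},\qquad Z|_{\partial\Omega}=0.
\end{equation*}
A direct computation yields $L(W_\lambda/p)=-\lambda W_\lambda^{p+1}$, so $L(\lambda Z+W_\lambda/p)=0$ with boundary value $b/p>0$; since the zero-order coefficient of $L$ is nonpositive, the strong maximum principle forces $\lambda Z+W_\lambda/p>0$ pointwise in $\Omega$, i.e.\ $W_\lambda+p\lambda Z>0$. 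Therefore
\begin{equation*}
\frac{d}{d\lambda}\bigl[\lambda F(\lambda)\bigr]=\int_\Omega W_\lambda^{p}\,dx+p\lambda\int_\Omega W_\lambda^{p-1}Z\,dx=\int_\Omega W_\lambda^{p-1}\bigl(W_\lambda+p\lambda Z\bigr)\,dx>0.
\end{equation*}

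For the endpoint behaviour, as $\lambda\to 0^+$ the function $b-W_\lambda\ge 0$ satisfies $-\e\Delta(b-W_\lambda)=\lambda W_\lambda^{p+1}$ with zero boundary data, so $\|b-W_\lambda\|_\infty=O(\lambda)$ and hence $\lambda F(\lambda)\to 0$. For $\lambda\to\infty$ I compare $W_\lambda$ with the linear Helmholtz solution $V_\lambda$ of $-\e\Delta V+\lambda b^{p}V=0$, $V|_{\partial\Omega}=b$: since $W_\lambda\le b$ forces $\e\Delta W_\lambda\le\lambda b^{p}W_\lambda$, one obtains $W_\lambda\ge V_\lambda$ by comparison, and an explicit exponential barrier $\underline V(x)=c_\lambda\bigl(e^{-\alpha d(x)}-e^{-\alpha\delta_0}\bigr)_+$ in a fixed tubular neighborhood $\{d<\delta_0\}$ with $\alpha\sim\sqrt{\lambda b^{p}/\e}$ (and $c_\lambda\nearrow b$ matching the boundary datum) shows, via Fermi coordinates, that $F(\lambda)\gtrsim\sqrt{\e/\lambda}$ and consequently $\lambda F(\lambda)\gtrsim\sqrt{\lambda\e}\to\infty$. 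Combined with continuity and the monotonicity above, the intermediate value theorem supplies a unique $\lambda_\ast=\lambda_\ast(\e)$ with $\lambda_\ast F(\lambda_\ast)=m$; then $W_\e:=W_{\lambda_\ast}$ is the unique solution of \eqref{4}, and $U_\e:=\lambda_\ast W_\e^{p}$ via \eqref{3} completes the steady state. The regularity $W_\e\in C^{\infty}(\Omega)\cap C^{1}(\overline\Omega)$ comes from standard elliptic bootstrap, using the smoothness of $\partial\Omega$ and the strict lower bound $W_\e\ge c>0$.

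The main obstacle is the strict monotonicity of $\lambda F(\lambda)$: because $F$ itself is decreasing in $\lambda$, the sign of $(\lambda F)'$ is not transparent, and the entire argument hinges on the algebraic identity $L(W_\lambda/p)=-\lambda W_\lambda^{p+1}$, which exactly cancels $LZ=W_\lambda^{p+1}$ to produce the positive auxiliary combination $\lambda Z+W_\lambda/p$ through the maximum principle.
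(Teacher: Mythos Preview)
Your argument is correct and follows the same overall scheme as the paper (reduce \eqref{4} to the local family $\e\Delta W_\lambda=\lambda W_\lambda^{p+1}$, $W_\lambda|_{\partial\Omega}=b$, and then show that $\lambda\mapsto \lambda\int_\Omega W_\lambda^p$ is continuous, strictly increasing, and sweeps $(0,\infty)$), but the key monotonicity step is handled differently. The paper makes the substitution $v_\lambda=\lambda^{1/p}W_\lambda$, which turns the family into a \emph{single} equation $\e\Delta v=v^{p+1}$ with increasing Dirichlet data $\lambda^{1/p}b$; then $\lambda F(\lambda)=\int_\Omega v_\lambda^p$ and the monotonicity is immediate from the comparison principle. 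You instead linearize in $\lambda$ and exploit the identity $L(W_\lambda/p)=-\lambda W_\lambda^{p+1}$ to deduce $\lambda Z+W_\lambda/p>0$ via the maximum principle. These are really the same insight in two guises: since $W_\lambda=\lambda^{-1/p}v_\lambda$, one computes $\lambda\partial_\lambda W_\lambda+W_\lambda/p=\lambda^{1-1/p}\partial_\lambda v_\lambda$, so your positive auxiliary function is precisely (a positive multiple of) $\partial_\lambda v_\lambda$, whose positivity is what the paper obtains by comparison. The paper's substitution is slightly more elementary in that it avoids justifying the $C^1$-dependence $\lambda\mapsto W_\lambda$ (which you use implicitly and which does follow from the invertibility of $L$ with Dirichlet data), while your linearization makes the structure behind the monotonicity more explicit. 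The paper also includes an independent uniqueness argument for the nonlocal problem via the ratio $Q=W_1/W_2$, which your route renders unnecessary since strict monotonicity of $\lambda F(\lambda)$ already pins down $\lambda_\ast$ uniquely.
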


Next we shall characterize the asymptotic profile of the steady-state solution $(U_\e, W_\e)$ as $\e \to 0$, which is a tricky problem since $\varepsilon$ is a singular parameter. We shall show that $U_\e$ is a Dirac measure while $W_\e$ is a boundary-layer profile near the boundary as $\e \to 0$. To state our results, for any constant $\rho>0$, we define $\Omega_\rho$ as (see an illustration in Figure \ref{fig}-(a))
\begin{equation}\label{Omega}
\Omega_\rho=\{x\in\Omega\mid 0<\mathrm{dist}(x,\partial\Omega)<\rho\},
\end{equation}
and we denote by $\Omega_\rho^c$ its complement in $\Omega$.

\begin{figure}[h]
\centering

\includegraphics[width=5.5cm]{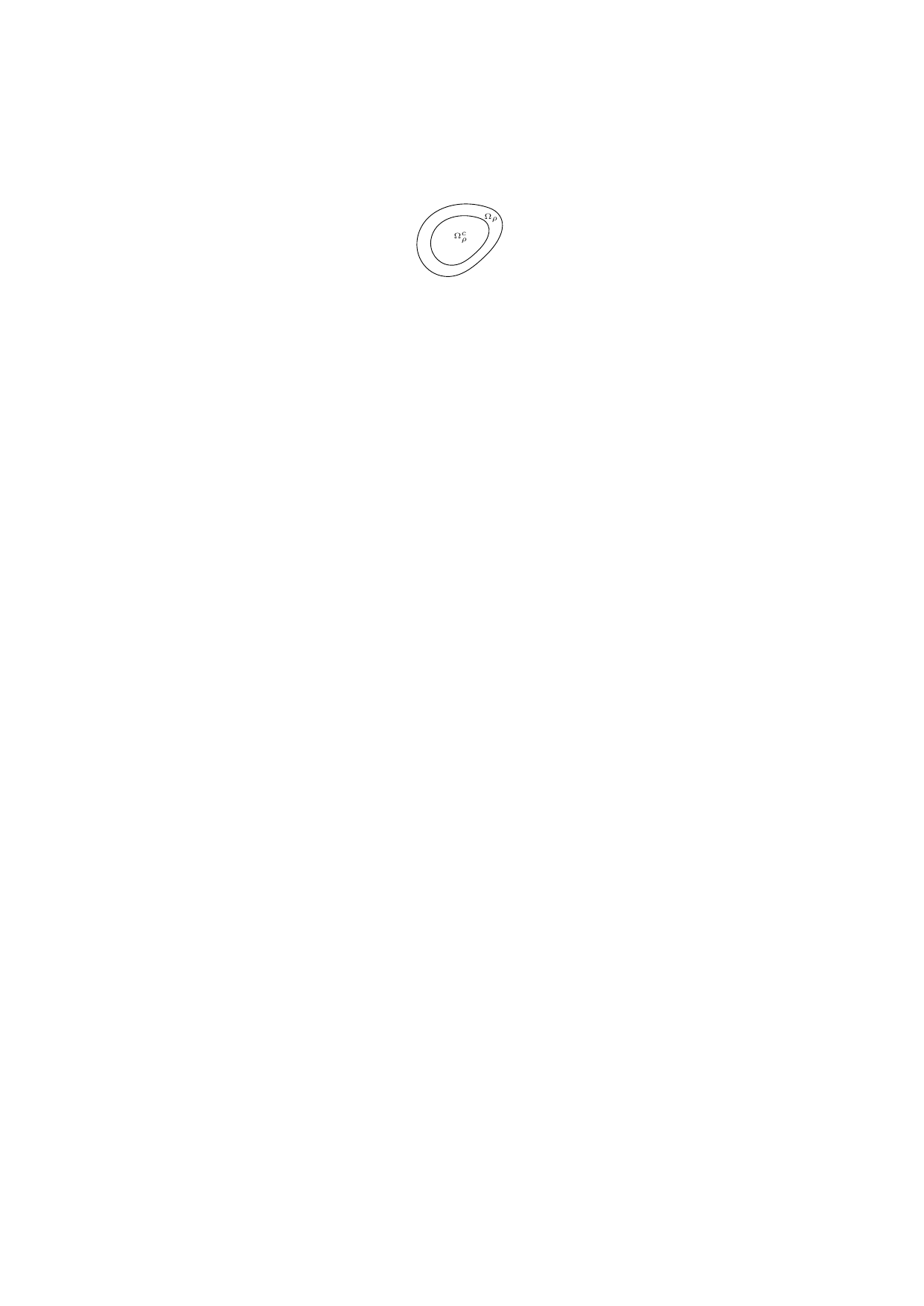} \hspace{1cm}
\includegraphics[width=6cm]{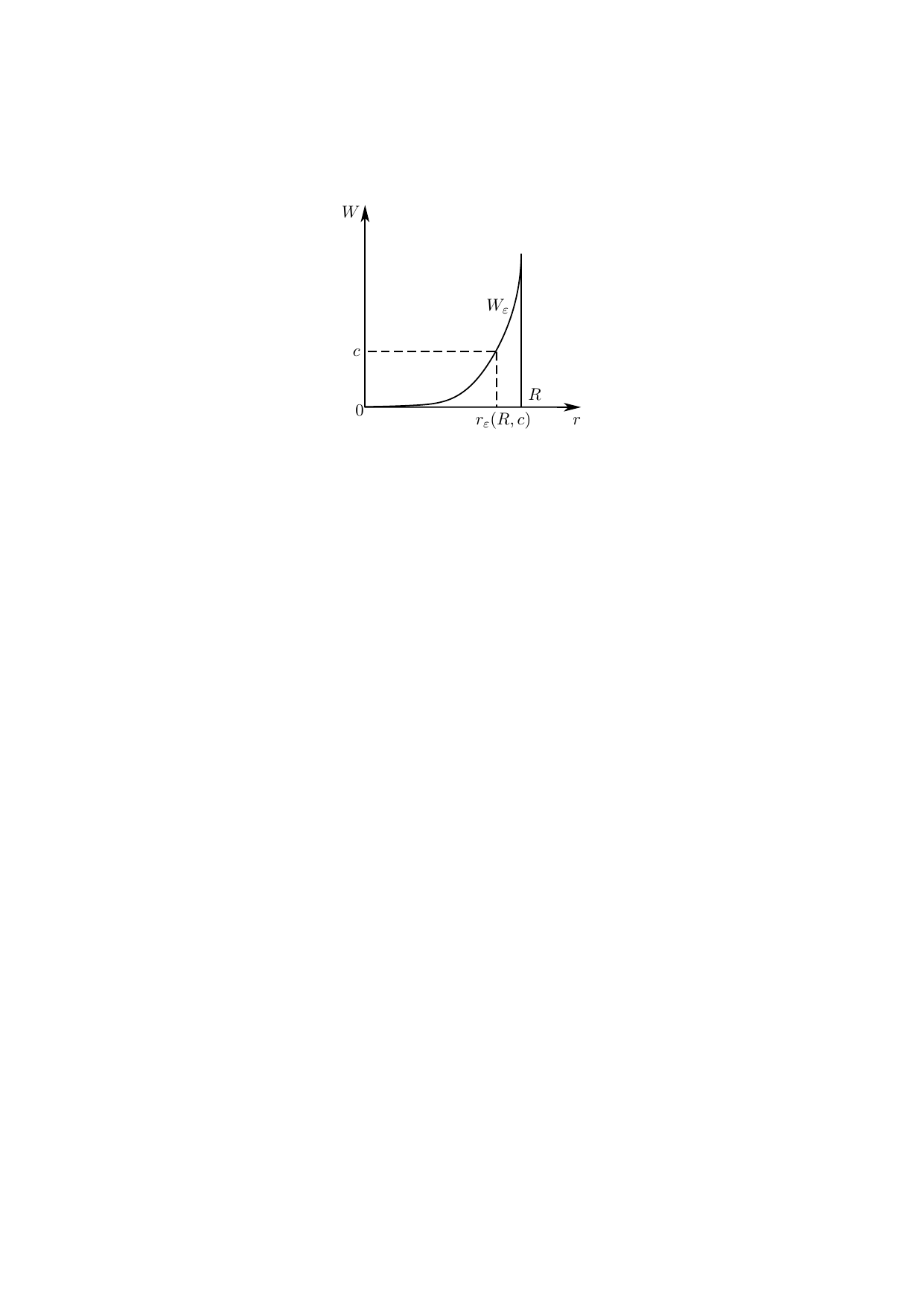}

(a) \hspace{6.5cm} (b)
\caption{(a) A schematic of domain $\Omega_\rho$. (b) Illustration of radial boundary-layer thickness}
\label{fig}
\end{figure}

We will say that two functions of one real variable are similar $f \sim g$ if and only if $r_1 f<g<r_2g$  for all values of the variable with some constants $r_1,r_2>0$. We now give the definition of boundary-layer thickness as a function of the positive parameter $\e$.

\begin{definition}
	\label{defi-bl}
	Let $\mu(\e)$ be a non-negative function satisfying $\mu(\e)\to0$ as $\e\to0$ and
 ${W}_\e(x)$ be the solution of \eqref{4}. Denoting $\ell_\e={\rm dist}(x_{in},\partial\Omega)$ for any interior point of $\Omega$,  we say the boundary-layer thickness of $W_\e(x)$ is the same order of $\e$ as $\mu(\e)$ if the the following conditions are fulfilled:
	\begin{enumerate}
		\item  If $\lim\limits_{\e\to0}\frac{\ell_\e}{\mu(\e)}=0$, then $\lim_{\e\to0}{W}_\e(x_{in})=b;$
		
		\item  If $\lim\limits_{\e\to0}\frac{\ell_\e}{\mu(\e)}=L\in(0,\infty)$, then $\lim_{\e\to0}{W}_\e(x_{in})\in(0,b);$
		
		\item  If $\lim\limits_{\e\to0}\frac{\ell_\e}{\mu(\e)}=\infty$, then $\lim_{\e\to0}{W}_\e(x_{in})=0.$
	\end{enumerate}
\end{definition}

Then our second main results are stated below.

\begin{theorem}
\label{th-existence}
Let $\Omega \subset \mathbb{R}^n~(n\geq 1)$ be a bounded domain with smooth boundary. Then for any small fixed constant $\delta>0$, the solution obtained in Theorem \ref{th1.1} satisfies
\begin{equation}
\label{eq1.3-1}
U_\e(x)\sim \frac{1}{\e\left(1+\frac{\mathrm{dist}(x,\partial\Omega)}{\e}\right)^2}, \ \ \ W_\e(x)\sim \left(1+\frac{\mathrm{dist}(x,\partial\Omega)}{\e}\right)^{-\frac2p} \quad\mbox{in}\quad\Omega_\delta,
\end{equation}
and there are some constants $c_1,c_2>0$ independent of $\e$ such that
\begin{equation}
\label{eq1.3-3}
\|U_\e(x)\|_{L^\infty} \leq c_{2}\e \quad\mbox{and}\quad \|W_\e(x)\|_{L^\infty} \leq c_1\e^{\frac2p}\quad\mbox{in}\quad\Omega_\delta^c .
\end{equation}
Moreover, the boundary-layer thickness is of order of $\e$.
\end{theorem}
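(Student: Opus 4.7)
The plan is to reduce the nonlocal equation \eqref{4} to a local semilinear Dirichlet problem parameterised by the unknown constant $\s_\e:=m/\int_\Omega W_\e^\chi\,dx$, and then to pin down the small-$\e$ asymptotics of $\s_\e$ self-consistently through sharp two-sided barriers built from the explicit one-dimensional profile. Writing $\e\Delta W_\e=\s_\e W_\e^{\chi+1}$ and setting $\tilde\e:=\e/\s_\e$ converts the problem to $\tilde\e\,\Delta W_\e=W_\e^{\chi+1}$ in $\Omega$ with $W_\e=b$ on $\partial\Omega$. Treating $\tilde\e$ temporarily as a free small parameter, the companion half-line ODE $\tilde\e\,\phi''=\phi^{\chi+1}$ with $\phi(0)=b$, $\phi(\infty)=0$, has the explicit solution
\[
\phi_{\tilde\e}(d)=b\bigl(1+\gamma\,d\bigr)^{-2/\chi},\qquad \gamma=\frac{\chi}{\sqrt{2(\chi+2)}}\,\sqrt{b^\chi/\tilde\e}.
\]
This is my candidate boundary-layer profile.

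I will transplant $\phi_{\tilde\e}$ into $\Omega$ via Fermi coordinates. In the tubular neighbourhood $\Omega_\delta$ with coordinates $(d,y)$, $d=\mathrm{dist}(x,\partial\Omega)$, $y\in\partial\Omega$, I will use the decomposition $\Delta=\partial_d^2-H(d,y)\partial_d+\Delta_{\partial\Omega_d}$ proved in \cite{Lee}, where $H$ is the mean curvature of the parallel surface and $\Delta_{\partial\Omega_d}$ its intrinsic Laplace-Beltrami operator. Test functions of the form $\phi_{\tilde\e}((1\pm\eta)d)$ with $\eta=\eta(\tilde\e)\to 0$ chosen optimally become sub- and super-solutions of $\tilde\e\,\Delta W=W^{\chi+1}$ inside $\Omega_\delta$, since the geometric correction terms $H\partial_d\phi$ and $\Delta_{\partial\Omega_d}\phi$ are of strictly lower order than $\phi^{\chi+1}$ on the scale $d\lesssim 1/\gamma$. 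Outside $\Omega_\delta$ the super-solution is continued by the constant $\phi_{\tilde\e}(\delta)$ and the sub-solution by $0$; the scalar comparison principle together with the monotonicity of $W\mapsto W^{\chi+1}$, as in \cite{sattinger2006topics}, then yields two-sided barriers on $W_\e$ throughout $\Omega$ matching the profile $\phi_{\tilde\e}$ up to a $(1+o(1))$ factor as $\tilde\e\to 0$.

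With the barriers in hand, integrating $W_\e^\chi$ in Fermi coordinates and applying Fubini gives
\[
\int_\Omega W_\e^\chi\,dx=(1+o(1))\,b^\chi|\partial\Omega|\int_0^{\infty}(1+\gamma d)^{-2}\,dd=(1+o(1))\,\frac{b^\chi|\partial\Omega|}{\gamma},
\]
so that $\s_\e=(1+o(1))\,m\gamma/(b^\chi|\partial\Omega|)$. Inserting this into $\gamma\propto(\s_\e/\e)^{1/2}$ produces the algebraic relation $\gamma\sim 1/\e$, whence $\s_\e\sim 1/\e$ and $\tilde\e\sim\e^{2}$. Re-running the barrier construction with these scalings yields exactly $W_\e(x)\sim(1+\mathrm{dist}(x,\partial\Omega)/\e)^{-2/\chi}$ in $\Omega_\delta$, and multiplying by $\s_\e$ through \eqref{3} gives the asserted profile of $U_\e$ in \eqref{eq1.3-1}. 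The $L^\infty$ bounds on $\Omega_\delta^c$ in \eqref{eq1.3-3} follow at once by evaluating the super-solution at any $d\geq\delta$, and the boundary-layer thickness claim drops out of Definition \ref{defi-bl} upon plugging $d=\ell_\e$ into the profile and exhausting the three cases $\ell_\e/\e\to 0, L, \infty$.

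The main obstacle is the coupling between the barrier construction and the unknown parameter $\tilde\e$: the sub- and super-solutions can only be written down once $\tilde\e$ is known, yet $\tilde\e=\e/\s_\e$ is itself determined a posteriori from those barriers via the nonlocal integral. Breaking this circularity requires the $\chi$-power integrals of the sub- and super-solutions to coincide at leading order up to a sharp $(1+o(1))$ factor, which in turn forces very careful calibration of the dilation parameter $\eta(\tilde\e)$ and meticulous book-keeping of the mean-curvature and tangential remainders produced by the Fermi decomposition. This matching — not the mere existence of individual barriers — is where the real analytical work lies, and it is precisely the point at which the polynomial nonlinearity $W^{\chi+1}$ is substantially harder than the exponential nonlinearity handled in \cite{Lee}, for which the nonlocal integral enjoys an \emph{a priori} positive lower bound.
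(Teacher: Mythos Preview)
Your overall strategy coincides with the paper's: rewrite \eqref{4} as the local problem $\tilde\e\,\Delta W=W^{\chi+1}$ with $\tilde\e=\e/\s_\e$, build sub- and super-solutions from the explicit half-line profile transplanted via Fermi coordinates, integrate to find $\int_\Omega W_\e^{\chi}$, and close the self-consistency relation $\s_\e\sim 1/\e$, $\tilde\e\sim\e^{2}$. This is exactly the skeleton of Lemmas \ref{le4.1}--\ref{le3.thick} and the proof of Theorem \ref{th-existence}.

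There are, however, two points where your write-up diverges from what actually works. First, you claim barriers of the form $\phi_{\tilde\e}((1\pm\eta)d)$ with $\eta=\eta(\tilde\e)\to 0$, yielding a $(1+o(1))$ match. A short computation shows this fails for the super-solution throughout $\Omega_\delta$: the curvature correction satisfies $\tilde\e\,|H\partial_d\phi|/\phi^{\chi+1}\sim (1/\gamma+d)$, which is of order $\delta$ at $d\sim\delta$, forcing $\eta$ to be bounded below by a constant depending on $\delta$ and $\|H\|_{L^\infty}$. The paper accordingly builds barriers with \emph{fixed} constants (Lemma \ref{le4.2}): a sub-solution of the form $(2\delta-d)(1+d/(c_p\tilde\e^{1/2}))^{-2/p}$ that vanishes on the inner boundary, and a super-solution $b(1+b^{p/2}d/(\Lambda c_p\tilde\e^{1/2}))^{-2/p}$ with a large $\Lambda$. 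Crucially, the self-consistency closure (Lemma \ref{le3.thick}) only needs the coarse relation $\int_\Omega W_\e^{\chi}\sim \tilde\e^{1/2}$, not the $(1+o(1))$ you insist on; with $r_1/\gamma\le \int W^\chi\le r_2/\gamma$ one still gets $\gamma\sim 1/\e$. Your final paragraph misidentifies the difficulty: the circularity is broken by order-of-magnitude barriers, not by a sharp leading-order match.

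Second, you dispose of the interior too quickly. Extending the super-solution by the constant $\phi_{\tilde\e}(\delta)$ and the sub-solution by $0$ does not by itself give the interior bound \eqref{eq1.3-3}, nor the information needed to compare on $\partial\Omega_\delta\setminus\partial\Omega$. The paper handles this through an independent interior estimate (Lemma \ref{le4.1}), obtained by comparing with the radial problem on balls inscribed at each point; that estimate then feeds into the super-solution comparison on $\partial\Omega_\delta\setminus\partial\Omega$. A related consequence is the verification of Definition \ref{defi-bl}(1): since the lower barrier does \emph{not} equal $b$ on $\partial\Omega$, the limit $W_\e(x_{\mathrm{in}})\to b$ when $\ell_\e/\e\to 0$ does not ``drop out'' of the barriers; the paper proves it via an $\e^{-1}$ gradient bound obtained by rescaling and elliptic regularity (Case 1 in the proof of Theorem \ref{th-existence}).
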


\begin{remark}\label{bsl}
From \eqref{eq1.3-3}, we can see that $U_\e(x)\to0$ as $\e\to0$ provided $x$ is away from $\partial\Omega$. Combined with the fact $\int_{\Omega}U_\e dx=m$ and the value $U_\e|_{\partial \Omega}$ are the same from \eqref{3} since $W_\e |_{\partial \Omega}=b>0$, we get that in the distribution sense
$$U_\e\rightharpoonup m\delta_{\partial\Omega} \  \ \mathrm{as} \ \ \e \to 0$$
where $\delta_{\partial\Omega}$ denotes the Dirac source located in the $(n-1)$-th Hausdorff dimension set $\partial\Omega$.
From \eqref{eq1.3-3}, we also have that
\begin{equation*}
\label{th1.2-profile}
\lim_{\e\to0}\|{W}_\e\|_{L^\infty(\overline{\Omega_{\delta}^c})}=0,
\end{equation*}
but $W_\e=b>0$ on $\partial \Omega$. Hence $W_\e(x)$ is a boundary-layer profile as $\e \to 0$.
\end{remark}

In Theorem \ref{th1.1}, the existence and uniqueness of non-trivial steady state solutions of the Keller-Segel system \eqref{KS} are established. In Theorem \ref{th-existence}, we further show that the non-trivial steady state solution $(U_\e,V_\e)$ obtained in Theorem 1.1 is a boundary spike-layer profile as $\e \to 0$ and find the explicit asymptotic profile of $(U_\e,V_\e)$ near the boundary as $\e>0$ is small as given in \eqref{eq1.3-1}.  Now we proceed to investigate how the boundary curvature affects the boundary-layer profile and thickness, for which we can only give an answer when $\Omega=B_R(0)$ with radius $R>0$ where the boundary curvature is $\frac{1}{R}$. In this case, we are able to find how the slope of boundary layer solution at the boundary $r=R$ depends on the boundary curvature $1/R$ and hence quantify the boundary-layer thickness in terms of the radius. Below we sketch this idea. We first show the radial boundary layer profile $U_\e(r)$ and $W_\e(r)$ is strictly increasing with respect to $r$, and derive the expansion of their slopes at the boundary $r=R$ in terms of $R$ for small $\e>0$.  Then for given level set such that $W_\e(r_\e)=c\in(0,b)$, the distance from boundary $r=R$ to the point $r_\e$ varies with respect to $R$. To be precise, for any $c\in(0,b)$, we define
\begin{equation}\label{thick}
r_\e(R,c):=W_\e^{-1}(c)\quad\mbox{and}\quad \Gamma_\e(R,c):=\{r\in[0,R]:W_\e\in[c,b]\}=[r_{\e}(R,c),R]
\end{equation}
as functions of $R$ and $c$, where $\Gamma_\e(R,c)$ is a closed interval with width
$R-r_\e(R,c)=O(\e)$
which is nothing but the boundary layer thickness (see an illustration in Figure \ref{fig}-(b)). Then we shall quantitatively expand $R-r_\e(R,c)$ in terms of $R$ and $\e$ up to the first order expansion to see how the boundary layer thickness depends on $R$ and $\e$. Precisely we have the following results.

\begin{theorem}\label{radial}
Let $\Omega=B_R(0)$, where $B_R(0)$ denotes a ball in $\mathbb{R}^n (n\geq 1)$ with radius $R>0$. Then \eqref{2} admits a unique steady-state $(U_\e,W_\e)(r)$ with $r=|x|$, which is radially symmetric and satisfies ${U}_\e'(r)>0, {W}_\e'(r)>0$. Furthermore, the following conclusions hold.
\begin{enumerate}
\item[(i)] As $\varepsilon \to 0$, $(U_\e,W_\e)(r)$ has the following expansion at the boundary:
\begin{equation}\label{asy}
\begin{aligned}
&{U}_\e'(R)=\frac{p^4m^3}{2(2+p)^2 \omega_n^3  R^{3(n-1)}}\frac{1}{\e^2}+O\Big(\frac{\log\e}{\e}\Big), \\
&{W}_\e'(R)=\frac{p mb}{(2+p) \omega_nR^{n-1}}\frac{1}{\e}+O(\log\e),
\end{aligned}
\end{equation}
where $\omega_n$ denotes the surface area of the unit ball in $\mathbb{R}^n$.
Furthermore, the boundary layer thickness has the following expansion
\begin{equation}\label{thick2}
R-r_\e(R,c)=\Big(\big(b/c)^{\frac{p}{2}}-1\Big)\frac{2n(p+2)}{mp^2}\frac{\alpha_n(R)\e}{R}+o_\e(1)\e,
\end{equation}
where $r_\e(R,c)$ is defined in \eqref{thick} and $\alpha_n(R)=\frac{\omega_n}{n} R^n$ denotes the volume of $B_R(0)$.

\item[(ii)] As $\chi\rightarrow\infty$, it holds that for any $\e>0$
\begin{equation}\label{eqn2.15-1}
\omega_nr^{n-1}U_\e(r)\rightarrow m\delta(r-R) \text{ in the sense of distribution},\end{equation}
\begin{equation}\label{eqn2.16-1}
W_\e(r)\rightarrow b \text{ in} \ \C(\overline{B_R}),
\end{equation}
where $\delta(r-R)$ is the Dirac function centered at $r=R$.
\end{enumerate}
\end{theorem}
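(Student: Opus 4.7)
The plan is to exploit radial symmetry to reduce the nonlocal problem \eqref{4} to an ODE, and then refine the ``$\sim$''-estimates of Theorem \ref{th-existence} via an explicit inner-layer analysis closed by a self-consistent fit of the nonlocal normalization. By uniqueness (Theorem \ref{th1.1}) and $O(n)$-invariance of the problem on $B_R(0)$, the solution must be radial, so $W_\e=W_\e(r)$ satisfies
\[ \e\bigl(W_\e'' + \tfrac{n-1}{r}W_\e'\bigr) = \lambda_\e W_\e^{p+1}, \quad \lambda_\e := \frac{m}{\int_{B_R} W_\e^p\,dx}, \quad W_\e'(0)=0,\ W_\e(R)=b. \]
Multiplying by $r^{n-1}$ and integrating from $0$ to $r$ gives $r^{n-1}W_\e'(r)=(\lambda_\e/\e)\int_0^r s^{n-1}W_\e^{p+1}(s)\,ds>0$, hence $W_\e'>0$ and then $U_\e'=p\lambda_\e W_\e^{p-1}W_\e'>0$. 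Setting $r=R$ (equivalently, integrating $\e\Delta W_\e=U_\e W_\e$ over $B_R$ and applying the divergence theorem) yields the key identity
\[ \e\omega_n R^{n-1}W_\e'(R) = \lambda_\e\int_{B_R}W_\e^{p+1}\,dx, \]
so part (i) is reduced to the asymptotic evaluation of $\lambda_\e$ and of the nonlocal integrals as $\e\to0$.

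For part (i) I would introduce the inner variable $y=(R-r)/\e$ and write $W_\e(r)=b\phi_\e(y)$. The ODE becomes $\phi_\e''-\tfrac{(n-1)\e}{R-\e y}\phi_\e'=\e\lambda_\e b^p\phi_\e^{p+1}$; with $k:=\lim_{\e\to0}\e\lambda_\e b^p$ to be determined, the leading-order equation $\phi_0''=k\phi_0^{p+1}$ with $\phi_0(0)=1$, $\phi_0(\infty)=0$ integrates (multiply by $\phi_0'$, separate variables) to
\[ \phi_0(y)=(1+\alpha y)^{-2/p}, \qquad \alpha=\sqrt{\tfrac{kp^2}{2(p+2)}}. \]
Using Theorem \ref{th-existence} to bound the contribution from $\Omega_\delta^c$ (where $W_\e\lesssim\e^{2/p}$) and Taylor-expanding the Jacobian $(R-\e y)^{n-1}=R^{n-1}+O(\e y)$, one obtains $\int_{B_R}W_\e^p\,dx=\omega_n b^p R^{n-1}\e/\alpha+O(\e^2\log(1/\e))$, the logarithm coming from $\int y(1+\alpha y)^{-2}\,dy$. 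Substituting into $\lambda_\e=m/\int W_\e^p\,dx$ closes the self-consistency: $k=m\alpha/(\omega_n R^{n-1})+o(1)$ combined with $\alpha^2=kp^2/(2(p+2))$ forces $\alpha=\frac{mp^2}{2(p+2)\omega_n R^{n-1}}+o(1)$. Then $W_\e'(R)=2b\alpha/(p\e)$ yields the second line of \eqref{asy}, $U_\e'(R)=p\lambda_\e b^{p-1}W_\e'(R)$ the first, and inverting $b\phi_0((R-r_\e)/\e)=c$ gives $R-r_\e=\e((b/c)^{p/2}-1)/\alpha+o(\e)$; rewriting $1/\alpha$ using $\alpha_n(R)/R=\omega_n R^{n-1}/n$ produces \eqref{thick2}.

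For part (ii), with $\e>0$ fixed, the bound $r^{n-1}W_\e'(r)=(\lambda_\e/\e)\int_0^r s^{n-1}W_\e^{p+1}(s)\,ds\le (\lambda_\e r^n/(n\e))W_\e^{p+1}(r)$ (using the monotonicity of $W_\e$) yields, after separating variables and integrating from $r$ to $R$, the quantitative lower bound $W_\e(r)\ge b[1+C(r)p/\e]^{-1/p}$ with $C(r)=m(R^2-r^2)/(2\omega_n R^n)$, so $W_\e\to b$ uniformly on $\overline{B_R}$ as $p\to\infty$, giving \eqref{eqn2.16-1}. A companion lower bound on $b-W_\e(r)$ for $r<R$ (obtained by estimating $b-W_\e(r)=\int_r^R W_\e'\,ds$ from below using monotonicity on $[r,R]$) produces $(W_\e(r)/b)^p=O(p^{-c(r)})$ with $c(r)>0$; inserted into the identity $\int_0^R\eta(r)\,\omega_n r^{n-1}U_\e\,dr = m\cdot\int_0^R\eta(r)r^{n-1}W_\e^p\,dr/\int_0^R r^{n-1}W_\e^p\,dr$ and separating $[0,R-\eta]$ from $[R-\eta,R]$ for small $\eta>0$, this forces the weighted average of $\eta$ to converge to $\eta(R)$ for every continuous $\eta$, giving \eqref{eqn2.15-1}.

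The main obstacle will be the refined inner expansion in the second paragraph: Theorem \ref{th-existence} only controls $W_\e$ up to multiplicative constants, so pinning down the exact $\alpha$ requires both the explicit phase-plane integration of the reduced 1D ODE and careful tracking of the $O(\e\log(1/\e))$ correction in the denominator $\int W_\e^p\,dx$, which itself tends to $0$ with $\e$ (unlike the $\int e^W\,dx$ term in \eqref{4n}) and therefore amplifies small errors; the self-consistency must be carried through quantitatively rather than just to leading order, and the curvature contribution from $(R-\e y)^{n-1}$ must be tracked to separate the $O(1/\e)$ coefficient from the $O(\log\e)$ remainder in \eqref{asy}.
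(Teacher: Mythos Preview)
Your treatment of part (i) takes a genuinely different route from the paper. The paper first builds explicit sub- and super-solutions for the local problem $\sigma\Delta W=W^{p+1}$ (Lemma~5.1), then derives $W_\e'(R)$ from the energy identity obtained by multiplying the radial ODE by $W_\e'$ and carefully estimating $\int_0^R\frac{n-1}{s}(W_\e')^2\,ds$ (Lemma~5.2), and only afterwards computes $\lambda_\e$ by integrating the squeezed profile (Lemma~5.3). Your inner-layer ansatz with self-consistent determination of $\alpha$ is the formal matched-asymptotics version of the same computation; it produces the correct constants more transparently, but the rigorous justification of the $O(\e^2\log\e)$ remainder in $\int W_\e^p$ still requires some two-sided control of $W_\e$ with matching leading coefficients---which is exactly what the paper's sub/super solutions supply. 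So for (i) your plan is sound and the obstacle you flag is the right one; the paper's barrier construction is one way to resolve it.

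For part (ii) there is a genuine gap. Your differential inequality $W_\e'/W_\e^{p+1}\le \lambda_\e r/(n\e)$ integrates to
\[
W_\e(r)\ \ge\ b\Bigl(1+\tfrac{p\,\lambda_\e b^{p}\,(R^2-r^2)}{2n\e}\Bigr)^{-1/p},
\]
and to get your stated $C(r)=m(R^2-r^2)/(2\omega_n R^n)$ you have tacitly used $\lambda_\e b^{p}=m/|B_R|$. But $\lambda_\e b^{p}=U_\e(R)=\max U_\e$, and from $m=\int U_\e\le U_\e(R)|B_R|$ one only has $\lambda_\e b^{p}\ge m/|B_R|$; the inequality goes the wrong way. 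Without an a~priori upper bound on $\lambda_\e b^{p}$ that is sub-exponential in $p$, the factor $[1+\cdots]^{-1/p}$ need not tend to $1$, and your argument is circular (bounding $\lambda_\e$ requires a lower bound on $\int W_\e^p$, which requires a lower bound on $W_\e$, which requires bounding $\lambda_\e$). The paper avoids this by a compactness argument: Helly's theorem gives a pointwise limit $U_\infty$ on $[0,R)$, and if $U_\infty\not\equiv 0$ one shows via the equation for $F=W_r/W$ that $F_\infty>0$ on some interval, whence $U(r)\ge \tfrac{d_1}{2}e^{p d_2(r-r_2)}$ contradicts $\int U=m$ as $p\to\infty$. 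The convergence $W_\e\to b$ then follows from $U_\infty\equiv 0$ via Arzel\`a--Ascoli and the integrated second equation. If you want to keep a direct quantitative argument you will need an independent sub-exponential bound on $U_\e(R)$ as $p\to\infty$, which does not come for free.
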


\begin{figure}[h]
\centering
\includegraphics[width=6.5cm]{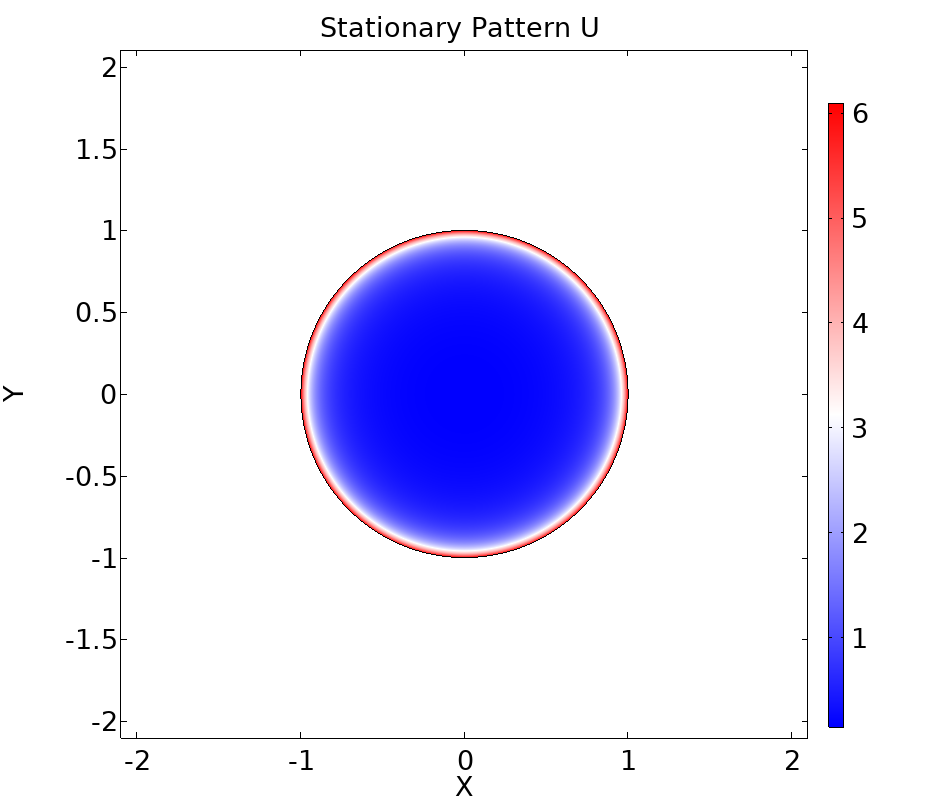}
\includegraphics[width=6.5cm]{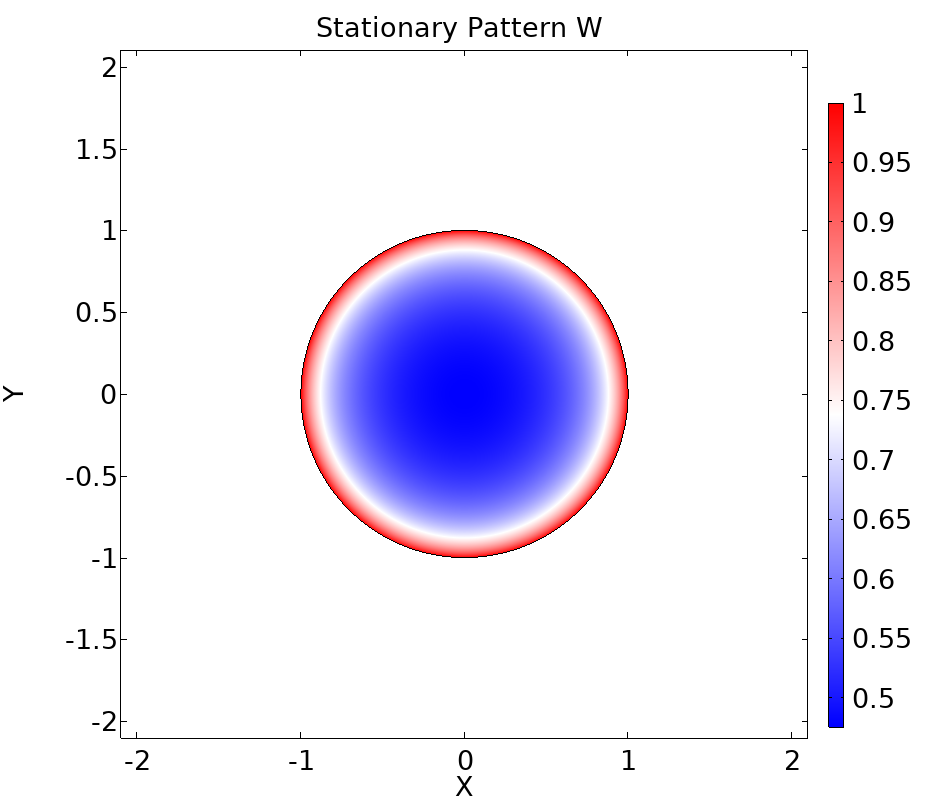}

\includegraphics[width=6.5cm]{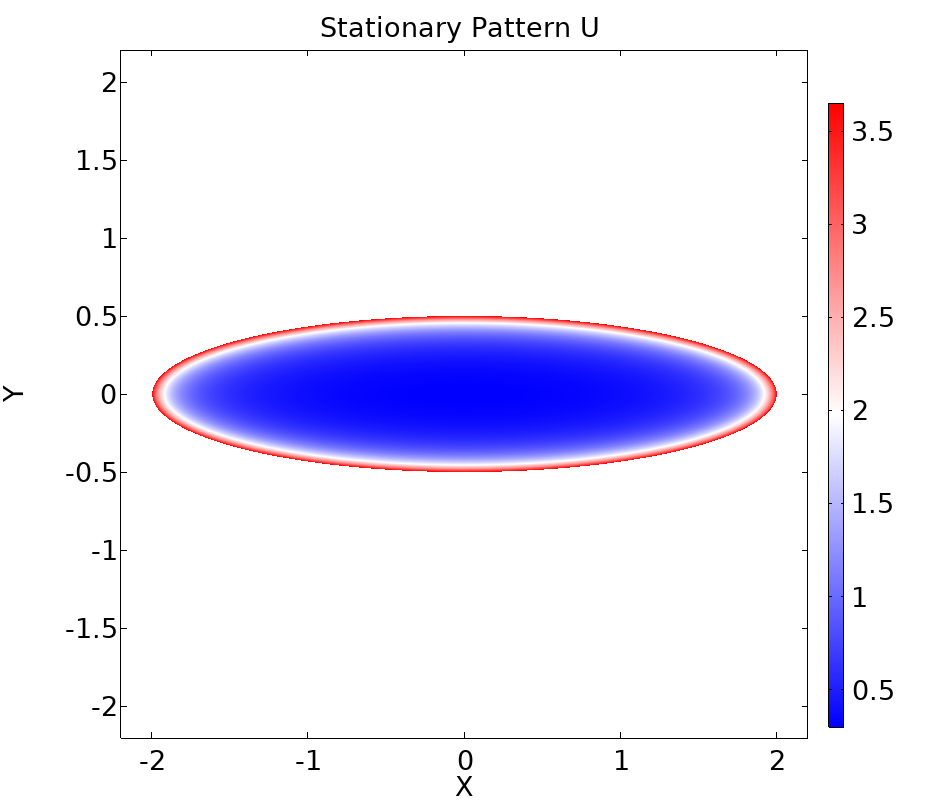}
\includegraphics[width=6.5cm]{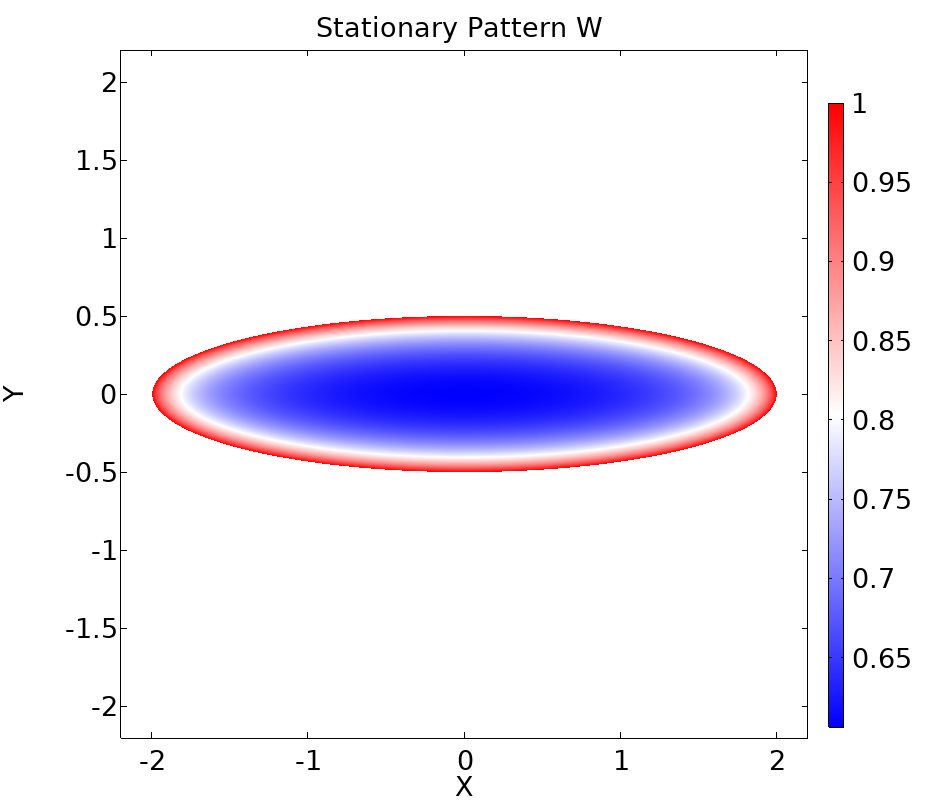}

\caption{Numerical simulations of steady-state boundary layer profiles of \eqref{KS} in a disk (first row) and in an ellipse (second row) with the same area, where the parameter values are $p=5, \e=0.1, b=1$ and initial value $(u_0, w_0)=(1,1)$.   }
\label{fig1}
\end{figure}
\begin{remark}\label{expansion}
In the expansion of ${W}_\e'(R)$ given in \eqref{asy}, one generally expects that the second term should be constant order, but surprisingly it is order of $\log\e$. Consequently, the second order term of ${U}_\e'(R)$ is $\frac{\log\e}{\e}$ instead of $\frac{1}{\e}$. These unexpected results come from the singularity in \eqref{4} (i.e. $\frac{m}{\int_\Omega W^pdx} \to \infty$) as $\e \to 0$.
\end{remark}

\begin{remark}
The expansion \eqref{thick2} asserts that the leading order term of the boundary layer thickness expansion is proportional to the product of the volume $\alpha_n(R)$ of the ball $B_R(0)$ and the boundary curvature $1/R$. We suspect that this is also true for general domains.  That is, we conjecture that the boundary layer thickness is proportional to the product of the volume of the domain and the boundary curvature. Though we are unable to prove this conjecture currently, we shall use two sets of numerical simulations to illustrate this conclusion. In the first set, we choose two domains, one is a disk and the other is an ellipse,  having different geometry but the same area. Then we numerically solve \eqref{KS} in these two domains with the same parameter values and initial data, and plot their numerical results (boundary-layer patterns) in Figure \ref{fig1}, where we see that the boundary-layer thickness is increasing with respect to the boundary curvature. In the second set, we choose one arbitrary domain with obviously different boundary curvatures at different parts of the boundary. The numerical patterns are plotted in Figure \ref{fig2}, where boundary-layer profiles are clearly observed and the boundary-layer is thicker at the boundary with larger curvature. All these numerical results are consistent with our conjecture on the influence of the boundary curvature on the boundary-layer thickness.
\end{remark}

Finally we state the nonlinear stability of the unique radial steady-state solution which is of a boundary-layer profile as $\e$ is small. This result is valid for $1\leq n\leq 3$ by using the Sobolev embedding theorem in the corresponding \emph{a priori} estimates in Proposition \ref{aprioriestimate}.

\begin{theorem}[Nonlinear stability of the radial steady state]\label{th-stability}
Let $\Omega=B_R(0)$ in $\mathbb{R}^n (1\leq n\leq 3)$ and the initial data $(u_0,w_0)$ be radially symmetric with $u_0>0$ and $w_0>0$, and that $u_0\in H^2(B_R)$, $w_0-b\in H_0^1(B_R)\cap H^2(B_R)$. Let $(U,W)$ be the unique steady state obtained in Theorem \ref{radial} with $m=\int_{B_R}u_0(x)dx.$
Then there exists a constant $\delta_1>0$ such that if the initial datum satisfies
\[\|(u_0-U,w_0-W)\|_{H^2(B_R)}\leq\delta_1,\]
the system \eqref{KS}  admits a unique global radial solution  $(u,w)\in\C([0,+\infty);H^2(B_R))$  satisfying
\begin{equation}\label{3.1-2.9}
\|(u-U,w-W)(\cdot,t)\|_{L^\infty(B_R)}\leq C e^{-\mu t},\end{equation}
where $C$ and $\mu$ are positive constants independent of $t$.
\end{theorem}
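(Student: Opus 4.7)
My strategy follows the four-stage outline the authors announce in the introduction. First, I would remove the singularity at $w=0$ through a nonlinear change of variables; second, exploit the radial symmetry by passing to the cumulative mass distribution so that the zero-flux boundary condition on $u$ becomes a homogeneous Dirichlet condition; third, close an a priori hierarchy of time-weighted energy estimates giving exponential decay in $H^2$; and fourth, upgrade that decay to $L^\infty$ via the embedding $H^2(B_R)\hookrightarrow L^\infty(B_R)$ valid for $1\le n\le 3$. Concretely I set $\phi=u-U$ and $h=\log(w/W)$. Since the background $W$ is uniformly positive on $\overline{B_R}$ by Theorem \ref{th1.1}, this change of variable is legitimate for small perturbations and converts $\nabla\log w=\nabla\log W+\nabla h$, eliminating the singularity; moreover, $h$ inherits a homogeneous Dirichlet datum on $\partial B_R$.

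In radial coordinates I then work with
\begin{equation*}
M(r,t)=\omega_n\int_0^r s^{n-1}u(s,t)\,ds,\qquad N(r,t)=M(r,t)-M_\ast(r),
\end{equation*}
where $M_\ast$ is the cumulative mass of $U$. Integrating the first equation of the transformed system from $0$ to $r$ and using radial symmetry reduces the order by one and produces a scalar parabolic equation of the schematic form
\begin{equation*}
N_t=N_{rr}-\tfrac{n-1}{r}N_r-p\,N_r(\log w)_r+\text{lower order},
\end{equation*}
together with \emph{homogeneous} Dirichlet conditions $N(0,t)=N(R,t)=0$ (the latter because total mass is conserved). Splitting $(\log w)_r=(\log W)_r+h_r$ exhibits a bounded steady-state coefficient plus a cubic cross-term with $h_r$, which is the payoff of the mass-function reduction. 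I would then carry out $L^2$, $H^1$, $H^2$ energy identities for $N$ paired with $h$, weighted by $r^{n-1}$ and by a time exponential $e^{\mu t}$, and combined in a hierarchy where the $L^2$-norm of $N$ is tied to the $H^1$-norm of $h$, the $H^1$-norm of $N$ to the $H^2$-norm of $h$, and so on. Summed, these identities produce a Gr\"onwall inequality that, under the smallness hypothesis $\|(u_0-U,w_0-W)\|_{H^2}\le\delta_1$, yields exponential decay of $\|N\|_{H^2}+\|h\|_{H^2}$ and hence of $\|(\phi,\psi)\|_{H^2}$ with $\psi=w-W$.

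The main obstacle is the cross-coupling between $N$ and $h$: the source $-\phi=-M_r/(\omega_n r^{n-1})$ in the $h$-equation formally contains a derivative of $N$, while the drift in the $N$-equation contains $h_r$, so each term sits at exactly the borderline of what the diffusion in the opposite equation can absorb. Choosing the weighted combination of energies so that these critical cross-terms cancel at leading order, and controlling the cubic residuals by the smallness of $\|(\phi,h)\|_{H^2}$, will be the delicate part of the analysis. The degenerate weight $r^{n-1}$ at the origin is handled by weighted Hardy-type inequalities, and positivity of $w$ along the evolution is preserved by the maximum principle applied to the $h$-equation as long as $\|h\|_{L^\infty}$ stays small, which is guaranteed by the Sobolev embedding together with the energy bound. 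Combining these a priori estimates with local well-posedness of the transformed (now non-singular) system and a standard continuation argument produces the unique global radial solution claimed, and the embedding $H^2\hookrightarrow L^\infty$ converts the exponential $H^2$-decay into the pointwise bound \eqref{3.1-2.9}.
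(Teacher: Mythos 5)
Your outline does track the paper's strategy (log change of variable $\psi=\log(w/W)$, radial anti-derivative/mass function with homogeneous Dirichlet data at $r=0,R$ --- your $N$ is exactly $\omega_n r^{n-1}\phi$ with $\phi$ as in \eqref{anti} --- then $e^{\mu t}$-weighted energy estimates and the embedding $H^2\hookrightarrow L^\infty$ for $n\le 3$). But the step you explicitly defer, ``choosing the weighted combination of energies so that these critical cross-terms cancel,'' is not a technicality to be postponed: it is the mathematical content of the proof, and without naming the mechanism your Gr\"onwall inequality does not close. The reason is that the coefficients coupling the two equations are steady-state quantities with no smallness whatsoever: $V_r=(\ln W)_r$ and $U$ are only bounded for fixed $\varepsilon$, and in the boundary-layer regime $V_r$ is of size $\varepsilon^{-1}$ near $r=R$. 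If you estimate the linear terms $-p\int_0^R r^{n-1}V_r(u-U)\phi$, $-p\int_0^R r^{n-1}U\psi_r\phi$, $-p\int_0^R(r^{n-1}\phi)_r\psi$ and $2p\varepsilon\int_0^R r^{n-1}V_r\psi\psi_r$ by Young/Poincar\'e/Hardy, the resulting quadratic contributions carry constants proportional to $\|V_r\|_{L^\infty}$ or $\|U\|_{L^\infty}$ that cannot be absorbed by the only available dissipations, $\int_0^R r^{n-1}(u-U)^2$ (coefficient $1$) and $p\varepsilon\int_0^R r^{n-1}\psi_r^2$, no matter how the Young parameters are tuned; one would need a smallness condition on the steady gradient that is false.

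What actually closes the basic $L^2$ step in the paper (see \eqref{3.4}) are three exact structural facts that your proposal does not identify: (i) the $\phi$-equation is tested with the weight $r^{n-1}\phi/U$, so that the steady relation $U_r=pUV_r$, i.e.\ $(1/U)_r+pV_r/U=0$, makes the dangerous drift term vanish identically; (ii) pairing this with $p\,r^{n-1}\psi$ as multiplier for the $\psi$-equation makes the two linear coupling terms cancel exactly after one integration by parts, using $\phi(0,t)=\phi(R,t)=0$; (iii) the term coming from $2\varepsilon V_r\psi_r$ is integrated by parts into $+p\varepsilon\int_0^R(r^{n-1}V_r)_r\psi^2$, which has a favorable sign only because of the separate sign lemma $(r^{n-1}V_r)_r>0$ (Lemma \ref{lemma2.6}), itself proved from the steady ODE together with the monotonicity $U_r>0$ of Theorem \ref{radial}. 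These ingredients (plus, at the top of the hierarchy, the auxiliary variable $ue^{-pv}$ used to obtain the $H^2$ bound for $u-U$ compatibly with the no-flux condition) must be supplied, not deferred. A smaller omission: the theorem asserts uniqueness for the original system \eqref{KS}, which the paper proves by a direct $L^2$-contraction argument for two solutions of \eqref{KS}; your route via uniqueness of the transformed problem needs at least the remark that every $H^2$ radial solution with $w_0>0$ stays positive (maximum principle with $w=b$ on the boundary) and hence arises from the transformation.
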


\begin{figure}[t]
\centering

\includegraphics[width=6.5cm]{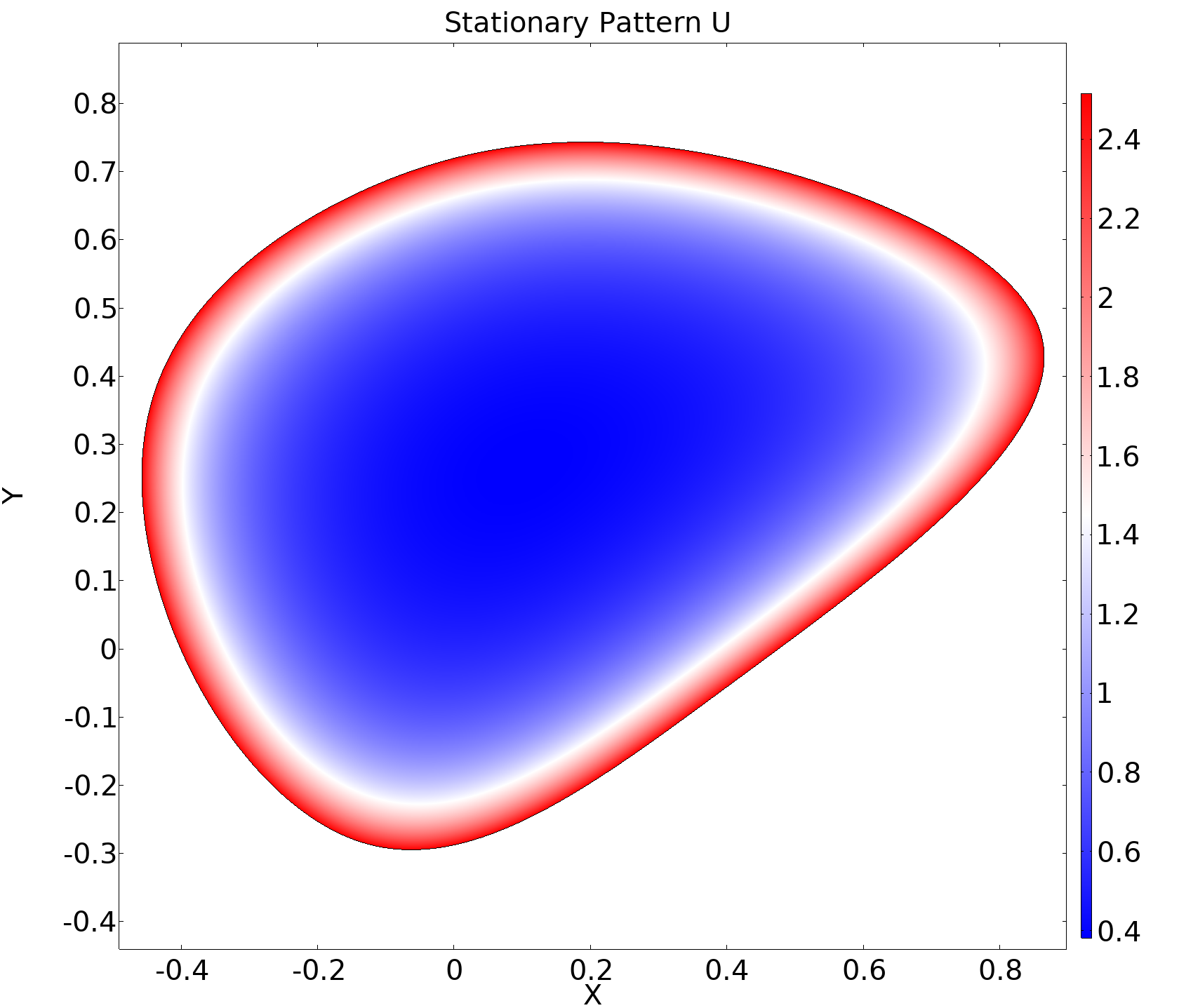}
\includegraphics[width=6.5cm]{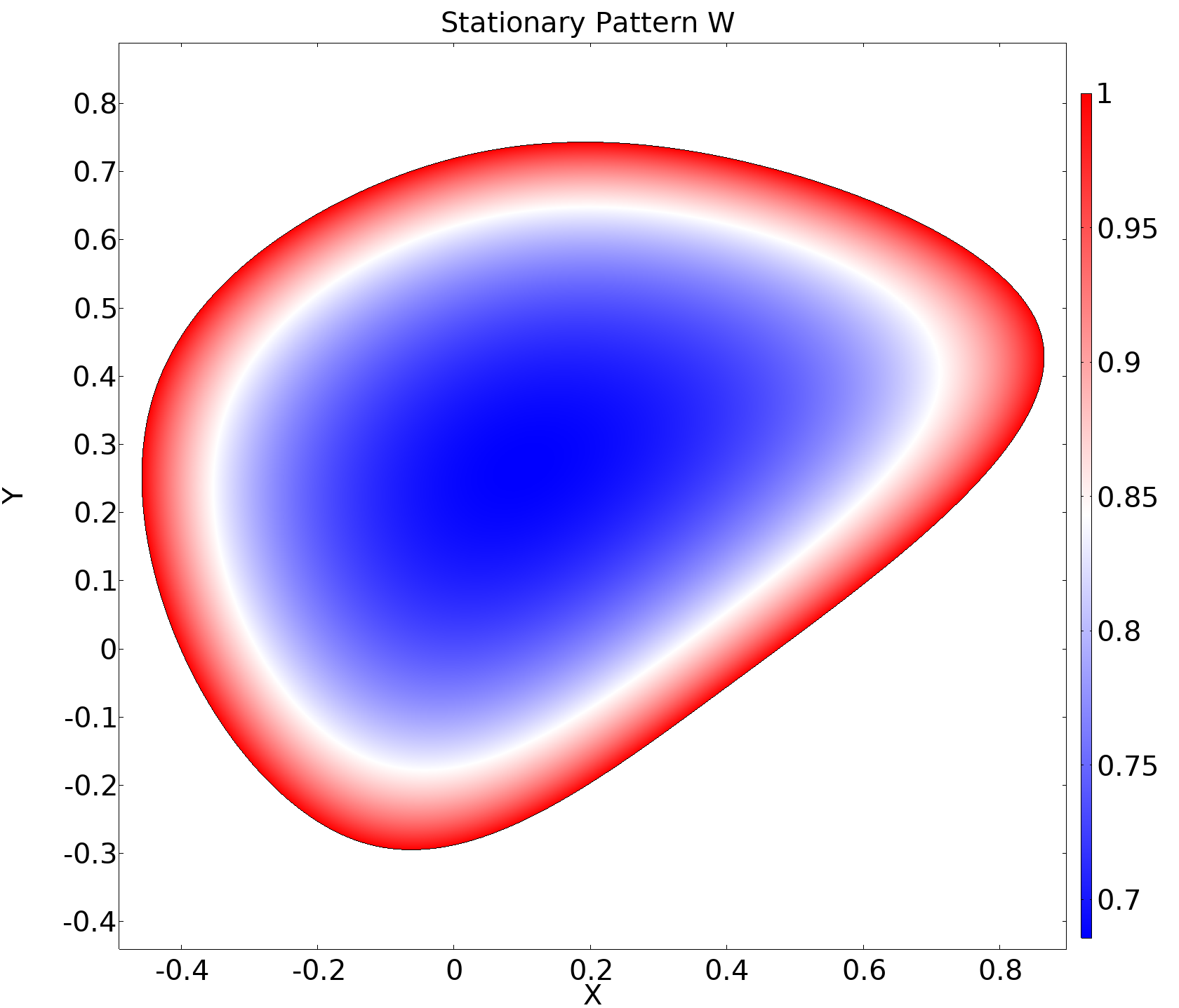}

\caption{Numerical simulations of steady-state boundary layer profiles of \eqref{KS} in a two dimensional general domain, where the parameter values are $p=5, \e=0.1, b=1$ and initial value $(u_0, w_0)=(1,1)$.     }
\label{fig2}
\end{figure}

\section{Existence and uniqueness}
This section is devoted to proving the existence and uniqueness of non-negative solutions to \eqref{2} stated in Theorem \ref{th-existence}. It suffices to prove the existence and uniqueness of solutions to the nonlocal problem \eqref{4} due to the relation given in \eqref{3}.

\subsection{Existence}\label{sec.existence}
In the sequel, without confusion, we shall denote $W_\e$ by $W$. Then the problem \eqref{4} is equivalent to the following local Dirichlet problem
\begin{equation}
\label{6}
\begin{cases}
\varepsilon \Delta {W}=\s{W}^{1+p}\quad &\mbox{in}\quad \Omega,\\
{W} = b>0,\quad &\mbox{on}\quad \partial\Omega
\end{cases}
\end{equation}
subject to the constraint
\begin{equation}\label{cons1}
\lambda \int_\Omega W^pdx=m
\end{equation}
for a given constant $m>0$. Note that we are concerned with positive solution for $W$ only (see the discussion in the Introduction).
Clearly ${W}_{\mathrm{super}}= b$ and ${W}_{\rm{sub}}=0$ are a super-solution and sub-solution of \eqref{6}, respectively. Since the function $f(W)=W^{p+1}$ is increasing, by the method of standard super-sub solutions, we immediately get that for any $\lambda>0$ equation \eqref{6} admits a unique classical positive solution depending on $\lambda$, denoted by  ${W}_\s$, which must be non-constant.  Now it remains to show that there is a unique $\lambda>0$ satisfying \eqref{cons1}, namely $\lambda \int_\Omega W_\s^pdx=m$. It turns out this is difficult since how $W_\s$ depends on $\lambda$ is unknown. Here we overcome this barrier by introducing a change of variable
$${v}_\s=\s^{\frac{1}{p}}W$$
and rewrite \eqref{6} as
\begin{equation}\label{6-u}
\begin{cases}
\e\Delta {v}_\s={v}_\s^{1+p}\quad &\mbox{in}\quad \Omega,\\
{v}_\s=\s^{\frac1p}b>0 &\mbox{on}\quad \Omega,
\end{cases}
\end{equation}
and \eqref{cons1} as
\begin{equation}\label{cons2}
\int_{\Omega}{v}_\s^pdx=m.
\end{equation}
In the new transformed problem \eqref{6-u}-\eqref{cons2}, we see that the parameter $\lambda$ only appears in the boundary condition. The existence and uniqueness of classical solutions to \eqref{6-u} for each given $\lambda>0$ is clearly obtained  by the method of super-sub solutions (or directly from the existence of $W$). The crucial point is to find a unique $\s$ such that the constraint \eqref{cons2} holds for given $m>0$. Next we prove this in the following two steps.
\medskip

\noindent {\bf Step 1} (continuity and monotonicity of $\int_{\Omega}{v}_\s^pdx$ with respect to $\lambda$).
 We denote the solution of \eqref{6-u} by ${v}_\s$ and prove that $\int_{\Omega}{v}_\s^pdx$ is continuous with respect to $\s.$ We start by showing that ${v}_\s(x)$ is non-decreasing with respect to $\s$. Indeed, for any two positive numbers $0<\s_1<\s_2,$ we claim that ${v}_{\s_1}\leq  {v}_{\s_2}$. If it is false, then  ${v}_{\s_1}-{v}_{\s_2}$ admits an interior global maximum point $q\in\Omega$ due to the boundary conditions in \eqref{6-u} and the maximal value is positive. Then we have
\begin{equation}
\label{8}
\e\Delta ({v}_{\s_1}-{v}_{\s_2})={v}_{\s_1}^{1+p}-{v}_{\s_2}^{1+p}>0
\end{equation}
in a small neighbourhood of $q$, which is a contradiction for $q$ to be an interior maximum. Thus the claim is proved.
Next, we observe again that denoting by
${v}_{\s_1,\s_2}={v}_{\s_2}-{v}_{\s_1}$,
then ${v}_{\s_1,\s_2}$ satisfies
\begin{equation}
\label{9}
\begin{cases}
\e\Delta {v}_{\s_1,\s_2}={v}_{\s_2}^{1+p}-{v}_{\s_1}^{1+p}\geq 0\quad &\mbox{in}\quad \Omega,\\[2mm]
{v}_{\s_1,\s_2}=\Big(\s_2^{\frac1p}-\s_1^{\frac1p}\Big){b} &\mbox{on}\quad \partial\Omega.
\end{cases}
\end{equation}
By the strong maximum principle either ${v}_{\s_1,\s_2}$ is constant, which is not true due to \eqref{6-u}, or
\begin{equation}
\label{10}
0\leq {v}_{\s_1,\s_2}<\Big(\s_2^{\frac1p}-\s_1^{\frac1p}\Big){b}\quad \mbox{for}\quad x\in\Omega
\end{equation}
which yields the continuity of ${v}_\s$ with respect to $\s$. As a consequence, the continuity of $\int_{\Omega}{v}_\s^pdx$ with respect to $\s$ is proved. Moreover, the function $\int_{\Omega}{v}_\s^pdx$ is increasing in $\lambda$. Assume otherwise $\int_{\Omega}{v}_{\s_1}^pdx=\int_{\Omega}{v}_{\s_2}^pdx$, since ${v}_{\s_1}\leq  {v}_{\s_2}$ for some $0<\s_1<\s_2,$ then it follows that ${v}_{\s_1}=  {v}_{\s_2}$ in $\bar{\Omega}$, which is not true in view of their continuity and the boundary conditions \eqref{6-u}. In fact, one can additionally prove that ${v}_{\s_1}<{v}_{\s_2}$, this requires an argument using the Hopf-boundary point lemma that we do not detail here for brevity.
\medskip

\noindent {\bf Step 2}. We claim that $\int_\Omega {v}_\s^pdx$ can take any value in $[0,\infty)$ as $\s$ ranges in $[0,+\infty)$. First notice that $\int_\Omega v_\s dx \to 0$ as $\s \to 0$ since $v_\s\to 0$ as $\s \to 0$. Next, we study the case that $\s>0$ is large.
We set
\begin{equation}
\label{11}
{\Theta}(y)=\s^{-\frac1p}{v}_\s(\s^{-\frac12}y).
\end{equation}
Then ${\Theta}(y)$ satisfies
\begin{equation}
\label{12}
\begin{cases}
\e\Delta {\Theta}={\Theta}^{1+p}\quad &\mbox{in}\quad \Omega^\s,\\
{\Theta}=b &\mbox{on}\quad \partial\Omega^\s,
\end{cases}
\end{equation}
where $\Omega^\s$ is defined as
\begin{equation*}
\Omega^\s=\{y\mid \s^{-\frac12}y\in\Omega\}.
\end{equation*}
By standard elliptic regularity theory, we gain that $\|{\Theta}\|_{C^1(\Omega^\s)}\leq {C_b}$ for some uniform positive constant $ {C_b}$. We choose $\ell$ such that $ {C_b}\ell<\frac{b}{2}.$ Then in the following set
\begin{equation}
\label{13}
\Omega^\s_{\ell}=\{y\mid \mbox{dist}(y,\partial\Omega^\s)<\ell\},
\end{equation}
we have
\begin{equation}
\label{14}
{\Theta}(y)\ge{b}- {C_b}\ell \ge\frac{b}{2}.
\end{equation}
It is not difficulty to check that
$$|\Omega^\s_{\ell}|=\ell C_{\Omega}\s^{\frac{n-1}{2}}~ \mbox{for some constant}~ C_\Omega>0~\mbox{depending only on}~\Omega.$$
Therefore
\begin{equation}
\label{15}
\begin{aligned}
\int_{\Omega}{v}_\s^pdx~&=\s^{1-\frac{n}{2}}\int_{\Omega^\s}{\Theta}^p(y)dy
\ge \s^{1-\frac{n}{2}}\int_{\Omega_\ell^{\s}}{\Theta}^p(y)dy\\
&\ge \left(\frac{b}{2}\right)^{p} \ell C_\Omega\s^{1-\frac{n}{2}}\s^{\frac{n-1}{2}}
=\ell C_\Omega \s^{\frac12}\left(\frac{b}{2}\right)^{p}.
\end{aligned}
\end{equation}
Therefore $\int_{\Omega}{v}_\s^pdx\to\infty$ as $\s\to+\infty$, and the claim is proved.

Combining the conclusions in Step 1 and Step 2, by the mean value theorem we can find a $\s$ such that $\int_{\Omega}{v}^pdx=m$ for the given $m$. Then we obtain a solution for \eqref{6-u}-\eqref{cons2}, which gives a solution to \eqref{6}-\eqref{cons1} and hence to \eqref{4}.

\subsection{Uniqueness}\label{sec.uniqueness}
In Section \ref{sec.existence}, the existence of solutions to the nonlocal problem \eqref{4} has been obtained. Now we prove the uniqueness of solutions to \eqref{4}.
Supposing there are two distinct solutions ${W}_1$, ${W}_2$, we shall prove ${W}_1\equiv {W}_2$ by the argument of contradiction and divide our analysis into two steps.

 {\it Step 1}. We prove that either ${W}_1\geq {W}_2$ or ${W}_1\leq {W}_2$. Without loss of generality, we may assume $\int_{\Omega}{W}_1^pdx\geq\int_{\Omega} {W}^p_2dx$. Under this assumption, we claim that ${W}_1\geq {W}_2$. If it is false, then there exists a point $q\in \Omega$, such that
$$({W}_1-{W}_2)|_{q}=\min_\Omega({W}_1-{W}_2)<0.$$
As a consequence, we have
\begin{equation*}
\left.\left(\frac{{W}_1^{1+p}}{\int_{\Omega}{W}_1^pdx}-\frac{{W}_2^{1+p}}{\int_{\Omega}{W}_2^pdx}\right)\right|_{q}<0,
\end{equation*}
which yields
\begin{equation*}
\left.\left[\varepsilon\Delta({W}_1-{W}_2)-m\left(\frac{{W}_1^{1+p}}{\int_{\Omega}{W}_1^pdx}-\frac{{W}_2^{1+p}}{\int_{\Omega}{W}_2^pdx}\right)\right]\right|_q>0.
\end{equation*}
Contradiction arises. Thus the claim holds. Therefore for any two solutions ${W}_1$ and ${W}_2$, either ${W}_1\geq {W}_2$ or ${W}_1\leq {W}_2$.
\smallskip

{\it Step 2}. Next we prove that if ${W}_1\geq {W}_2$, then ${W}_1={W}_2$. We set $Q=\frac{{W}_1}{{W}_2}$, it is obvious that
$$Q\geq 1~\mbox{in}~\Omega\quad\mbox{and}\quad Q=1~\mbox{on}~\partial\Omega.$$
Suppose that $Q\neq 1$ and
$$Q(q_0)=\max_{\Omega}Q>1.$$
Then
\begin{equation*}
\frac{{W}_1^p(q_0)}{{W}_2^p(q_0)}\geq\frac{{W}_1^p}{{W}_2^p}\quad\mbox{in}\quad \Omega.
\end{equation*}
It implies that
\begin{equation}
\label{2.positive}
\frac{{W}_1^p(q_0)}{{W}_2^p(q_0)}>\frac{\int_{\Omega}{W}_1^pdx}{\int_{\Omega}{W}_2^pdx},\quad\mbox{and}\quad
\left.\left(\frac{{W}_1^{p}}{\int_{\Omega}{W}_1^pdx}-\frac{{W}_2^{p}}{\int_{\Omega}{W}_2^pdx}\right)\right|_{q_0}>0.
\end{equation}
On the other hand, it is known that
\begin{equation}
\label{2.banlance}
\begin{aligned}
0\geq \Delta Q|_{q_0}=~&\left.\nabla\cdot\left(\frac{\nabla {W}_1\cdot {W}_2-{W}_1\cdot \nabla {W}_2}{{W}_2^2}\right)\right|_{q_0}\\
=~&\left.\left(\frac{\Delta {W}_1\cdot {W}_2-{W}_1\cdot\Delta {W}_2}{{W}_2^2}
-2\nabla Q\cdot\frac{\nabla{W}_2}{{W}_2}\right)\right|_{q_0}\\
=~&\frac{\Delta {W}_1(q_0)\cdot {W}_2(q_0)-{W}_1(q_0)\cdot\Delta {W}_2(q_0)}{{W}_2^2(q_0)}\\
=~&\frac{mW_1(q_0){W}_2(q_0)}{\varepsilon{W}_2^2(q_0)}\left.\left(\frac{{W}_1^{p}}{\int_{\Omega}W_1^pdx}-\frac{{W}_2^{p}}{\int_{\Omega}{W}_2^pdx}\right)\right|_{q_0},
\end{aligned}
\end{equation}
where we have used $\nabla Q(q_0)=0$ since $q_0$ is the maximal point of $Q$ in $\Omega.$ Using \eqref{2.positive} we see that the right hand side of \eqref{2.banlance} is positive, then contradiction arises and $Q\equiv 1$ holds. Thus, we finish the proof.

\begin{proof}[\bf Proof of Theorem \ref{th1.1}]
The existence and uniqueness of the solution to \eqref{4} has been proved in Section \ref{sec.existence} and Section \ref{sec.uniqueness}.  Since the existence of \eqref{2} and \eqref{4} has one-to-one correspondence via \eqref{3}, we obtain the existence of unique positive solution for \eqref{2} and complete the proof.
\end{proof}

\section{boundary-layer profile and thickness}
This section is devoted to the proof of Theorem \ref{th-existence}. We start with the following auxiliary problem
\begin{equation}
\label{3.bd1}
\begin{cases}
\e\Delta v=v^{1+p}\quad &\mbox{in}\quad \Omega,\\
v=b &\mbox{on}\quad \partial\Omega.
\end{cases}
\end{equation}

\begin{lemma}
\label{le4.1}
Let $v_\e\in C^\infty(\overline{\Omega})$ be the unique solution of \eqref{3.bd1}. For any compact subset $K\subset \Omega$ and sufficiently small $\e>0$, there exists a positive constant $C_K$ independent of $\e$ such that
\begin{equation}
\label{3.b-1}
\max_Kv_\e\leq C_K \e^{\frac{1}{p}}.
\end{equation}
\end{lemma}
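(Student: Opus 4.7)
The strategy is to scale away the parameter $\e$ and then invoke the classical Keller--Osserman interior bound. Setting $w_\e := \e^{-1/p}v_\e$, a direct substitution shows that $w_\e$ satisfies the $\e$-free equation $\Delta w_\e = w_\e^{p+1}$ in $\Omega$ with boundary data $w_\e = b\e^{-1/p}$ on $\partial\Omega$. Thus \eqref{3.b-1} is equivalent to the $\e$-independent bound $\max_K w_\e \le M_K$, from which $\max_K v_\e = \e^{1/p}\max_K w_\e \le M_K\,\e^{1/p}$ follows immediately.

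To establish this bound, I would proceed by a barrier argument on a ball inside $\Omega$. For each $x_0\in K$, set $d_0 := \mathrm{dist}(x_0,\partial\Omega)\ge d_K$ and $B := B_{d_0}(x_0) \subset \Omega$, and construct on $B$ the radial ``large'' super-solution $\bar W(x) = d_0^{-2/p}\,U(|x-x_0|/d_0)$, where $U:[0,1)\to\mathbb{R}_+$ is the radial blow-up profile solving
\begin{equation*}
U'' + \frac{n-1}{s}U' = U^{p+1},\qquad U'(0) = 0,\qquad U(s)\to+\infty \ \text{as } s\to 1^-.
\end{equation*}
By scale invariance, $\bar W$ is an exact solution of $\Delta\bar W = \bar W^{p+1}$ in $B$ that blows up on $\partial B$. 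Since $w_\e = b\e^{-1/p}$ is finite on $\partial B$, a standard maximum-principle comparison applies: if $w_\e-\bar W$ had a positive interior maximum at some $q\in B$, then $\Delta(w_\e-\bar W)(q)\le 0$ would contradict the identity $\Delta(w_\e-\bar W)(q) = w_\e^{p+1}(q)-\bar W^{p+1}(q)>0$ (valid since $w_\e(q)>\bar W(q)>0$ and $u\mapsto u^{p+1}$ is strictly increasing). Thus $w_\e\le \bar W$ in $B$, so that $w_\e(x_0)\le \bar W(x_0) = \gamma(n,p)\,d_0^{-2/p}\le \gamma(n,p)\,d_K^{-2/p}$, where $\gamma(n,p):= U(0)<+\infty$. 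Reverting to $v_\e$ yields \eqref{3.b-1} with $C_K := \gamma(n,p)\,d_K^{-2/p}$.

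The main obstacle is the existence of the blow-up profile $U$ with $U(0)$ finite. One obtains $U$ as the monotone limit of the Dirichlet problems $U_k'' + \tfrac{n-1}{s}U_k' = U_k^{p+1}$ with $U_k(1) = k$ and $U_k'(0) = 0$; the Keller--Osserman integrability $\int^{\infty}du/\sqrt{F(u)} < \infty$, with $F(u) = u^{p+2}/(p+2)$ (valid for every $p>0$), provides the $k$-uniform a priori bound on $U_k(0)$ needed to pass to the limit. In one dimension the explicit profile $U_{\mathrm{1D}}(s) = A(1-s)^{-2/p}$ with $A^p = 2(p+2)/p^2$ realizes $U$ directly, while in higher dimensions the extra positive drift $\tfrac{n-1}{s}U_k'$ requires a slightly enlarged radial super-solution of the same power-law form, a standard ODE argument. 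Once $\gamma(n,p)$ is known finite, both the comparison above and the reverse scaling are entirely routine.
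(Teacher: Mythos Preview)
Your argument is correct. The rescaling $w_\e=\e^{-1/p}v_\e$ reduces the claim to the classical Keller--Osserman interior bound for positive solutions of $\Delta w=w^{p+1}$, and your use of the radial blow-up (large) solution on $B_{d_0}(x_0)$ as a barrier is the standard way to prove that bound. One small point worth making explicit is that since $\bar W=+\infty$ on $\partial B$ you should run the comparison on $B_{d_0-\delta}(x_0)$ and let $\delta\to0$; you gesture at this but do not say it.

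The paper takes a more hands-on route. It does not rescale $\e$ away, nor does it appeal to the abstract existence of the Keller--Osserman large solution. Instead, after passing to an interior ball $B_{R_q}(q)$ and noting (via moving planes) that the comparison solution $\hat v_\e$ is radial and increasing, it writes down an \emph{explicit} radial supersolution of the form $(R_q/r)^{a}\bigl(1+(R_q-r)/(C\e^{1/2})\bigr)^{-2/p}$, with $a=\max\{n-2,\tfrac{n-1}{2},\tfrac{2}{p}\}$ and $C$ chosen by a direct computation of the sign of the resulting differential expression. Your approach is cleaner and shorter, and it yields the transparent constant $C_K=\gamma(n,p)\,d_K^{-2/p}$. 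The paper's approach is longer, but its explicit power-law barriers are exactly the objects that get refined in the subsequent lemmas (the boundary-profile Lemma~\ref{le4.2} and the radial Lemma~\ref{le5.1}), where sharp constants in front of $(1+\mathrm{dist}/\sqrt{\e})^{-2/p}$ are needed; so the extra computation here is reused downstream.
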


\begin{proof}
Let us first remark that we can reduce to analyze the case $b=1$ in \eqref{3.bd1} by a simple scaling argument. In fact, take $\bar v=v/b$, it is simple to show that it satisfies
\begin{equation}
\label{3.b-1aux}
\begin{cases}
\e b^{-p}\Delta \bar v=\bar v^{1+p}\quad &\mbox{in}\quad \Omega,\\
\bar v=1 &\mbox{on}\quad \partial\Omega.
\end{cases}
\end{equation}
Then proving the estimate \eqref{3.b-1} for the problem \eqref{3.bd1} with $b>0$ is equivalent to prove
$$
\max_K \bar v_\e\leq C(K)\e^{\frac{1}{p}}b^{-1}
$$
for the solution of \eqref{3.b-1aux} or equivalently to prove \eqref{3.b-1} for the problem \eqref{3.bd1} with $b=1$. Thus, in the rest of this proof, we assume $b=1$.

When $n=1$, without loss of generality we can assume that $\Omega=[-1,1]$. It is straightforward to check that the following function
$$v_{\e,s}(x)=\left(1+\frac{x+1}{c_p\e^\frac12}\right)^{-\frac{2}{p}}+\left(1+\frac{1-x}{c_p\e^\frac12}\right)^{-\frac{2}{p}},\quad
c_p=\sqrt{\frac{2}{p}\left(\frac{2}{p}+1\right)}.$$
provides a super-solution to the above equation. Indeed, by direct computation
\begin{equation*}
	\e\Delta v_{\e,s}=\left(1+\frac{x+1}{c_p\e^\frac12}\right)^{-\frac{2+2p}{p}}
	+\left(1+\frac{1-x}{c_p\e^\frac12}\right)^{-\frac{2+2p}{p}}
	\leq v_{\e,s}^{1+p}.
\end{equation*}
Together with the trivial fact $v_{\e,s}>1$ at $x=\pm 1$, one can easily conclude that  $v_\e<v_{\e,s}$ by the strong maximal principle. Then \eqref{3.b-1} follows easily.

Now we give the proof for $n\geq2$. For any $q\in \Omega$, we select $R_q$ such that $B_{R_q}(q)\subset\Omega$ and $\partial B_{R_q}(q)\cap\partial\Omega\neq \emptyset.$ Then we consider the solution $\hat v_\e$ of the following intermediate problem
\begin{equation}
\label{3.int}
\begin{cases}
\e\Delta v=v^{1+p}\quad &\mbox{in}\quad B_{R_q}(q),\\
v=1\quad  &\mbox{on}\quad\partial B_{R_q}(q).
\end{cases}
\end{equation}
By standard comparison argument, we obtain that $v_\e\leq \hat v_\e$ in $B_{R_q}(q)$. Next, using the method of moving planes, we see that $\hat v_\e(x)$ is a radially symmetric function with respect to $q$. We write $\hat v_\e(x)=\tilde v_\e(r)$ with $r=|x-q|$. Then it is not difficult to find that $\tilde v_\e(r)$ is a non-decreasing function in $r$ and verifies that
\begin{equation}
\label{3.ode}
\begin{cases}
\e\left(\tilde v''_\e+\frac{n-1}{r}\tilde v_\e'\right)-(\tilde v_\e)^{1+p}=0, \\
\tilde v_{\e}(R_q)=1,\quad \tilde v_\e'(0)=0.
\end{cases}
\end{equation}
We claim that
\begin{equation}
\label{3.claim}
\tilde v_\e\leq
\begin{cases}
2^{\max\{n-2,\frac{n-1}{2},\frac{2}{p}\}}\left(1+\frac{R_q}{2C\e^\frac12}\right)^{-\frac{2}{p}},\quad &\mbox{for}\quad r\in[0,R_q/2],\\[2mm]
2^{\max\{n-2,\frac{n-1}{2},\frac{2}{p}\}}\left(1+\frac{R_q-r}{C \e^\frac12}\right)^{-\frac{2}{p}},\quad &\mbox{for}\quad r\in[R_q/2,R_q],\\
\end{cases}
\end{equation}
where $C$ is some positive constant independent of $\e$ to be determined later. It suffices to prove the claim for $r\in [R_q/2,R_q]$ while the rest one for $r\in [0,R_q/2]$  follows easily by the non-decreasing property of the function. We define a barrier function for $r\in(0,R_q]$ by
$$\tilde v_{\e,l}=\left(\frac{R_q}{r}\right)^{a}\left(1+\frac{R_q-r}{C\e^\frac12}\right)^{-\frac{2}{p}},$$ where $a$ is a constant determined later. By a straightforward computation, we have
\begin{equation}
\label{3.exp}
	\begin{aligned}
		\e\Delta \tilde{v}_{\e,l}-(\tilde v_{\e,l})^{1+p}&=\e\left(\tilde v''_{\e,l}+\frac{n-1}{r}\tilde v'_{\e,l}\right)-(\tilde v_{\e,l})^{1+p}\\
		&=\e a(a+2-n)\frac{R_q^a}{r^{a+2}}\left(1+\frac{R_q-r}{C\e^\frac12}\right)^{-\frac{2}{p}}\\
		&\quad+\e^\frac12\frac{2}{Cp}(n-1-2a)\frac{R_q^a}{r^{a+1}}\left(1+\frac{R_q-r}{C\e^\frac12}\right)^{-\frac{2+p}{p}}\\
		&\quad+\left(\frac{c_p^2}{C^2}-\left(\frac{R_q}{r}\right)^{ap}\right)\frac{R_q^a}{r^{a}}
		\left(1+\frac{R_q-r}{C\e^\frac12}\right)^{-\frac{2+2p}{p}}.
	\end{aligned}
\end{equation}
We choose $a=\max\{n-2,\frac{n-1}{2},\frac{2}{p}\}$. Then the second term on the right hand-side of \eqref{3.exp} is non-positive. For the first and third terms, we can rewrite them as
\begin{equation*}
\begin{aligned}
&\e a(a+2-n)\frac{R_q^a}{r^{a+2}}\left(1+\frac{R_q-r}{C\e^\frac12}\right)^{-\frac{2}{p}}
+\left(\frac{c_p^2}{C^2}-\left(\frac{R_q}{r}\right)^{ap}\right)\frac{R_q^a}{r^{a}}
\left(1+\frac{R_q-r}{C\e^\frac12}\right)^{-\frac{2+2p}{p}}\\
&=F(\e,r)\frac{R_q^a}{r^{a}}\left(1+\frac{R_q-r}{C\e^\frac12}\right)^{-\frac{2+2p}{p}},
\end{aligned}
\end{equation*}
where
$$F(\e,r)=\frac{1}{r^2}a(a+2-n)\left(\e+\frac{2\e^\frac12}{C}(R_q-r)+\frac{1}{C^2}(R_q-r)^2\right)+\frac{c_p^2}{C^2}-\left(\frac{R_q}{r}\right)^{ap}.$$
Considering the function $F(\e,r)$, in the limit case $\e=0$  we have
\begin{equation*}
\begin{aligned}
r^{ap}F(0,r)=~&\frac{1}{C^2}\left(r^{ap-2}a(a+2-n)(R_q-r)^2+c_p^2r^{ap}\right)-R_q^{ap}\\
\leq~&\frac{1}{C^2}\left(a(a+2-n)+c_p^2-C^2\right)R_q^{ap}.
\end{aligned}
\end{equation*}
Now taking $C>\sqrt{a(a+2-n)+c_p^2+1}$, we see that $F(0,r)$ is negative for $r\in(0,R_q]$. Thus if $\e$ is sufficiently small, we see that $F(\e,r)<0$ for $r\in(0,R_q]$. On the other hand, it is straightforward to check that
\begin{equation}
\label{3.boundary}
\tilde{v}_{\e,l}(R_q)=1\quad\mathrm{and}\quad \lim_{r\to0}\tilde{v}_{\e,l}(r)=\infty.
\end{equation}
By the standard comparison argument we obtain that $\tilde v_{\e}\leq\tilde v_{\e,l}$ for $r\in (0,R_q]$. Then the claim \eqref{3.claim} follows directly. As a consequence, for any point $x\in B_{{R_q}/{2}}(q)$ we have
\begin{equation}
\label{3.est-1}
v_\e(x)\leq\hat v_\e(x)\leq 2^{\max\{n-2,\frac{n-1}{2},\frac{2}{p}\}}\left(1+\frac{R_q}{2C\e^\frac12}\right)^{-\frac{2}{p}}.
\end{equation}
Since $K$ is a compact subset of $\Omega$, there exist finitely many open balls
$$B_{R_{q_j}/2}(q_j)\subsetneq B_{R_{q_j}}(q_j)\subset\Omega~\mbox{with}~q_j\in K,\quad j=1,\cdots,m,$$
such that $K\subset\bigcup_{j=1}^mB_{R_{q_j}/2}(q_j)$. Let $R_0=\min_{1\leq j\leq m}R_{q_j}$. Then by \eqref{3.est-1}, we have
\begin{equation*}
v_\e(x)\leq 2^{\max\{n-2,\frac{n-1}{2},\frac{2}{p}\}}\left(1+\frac{R_0}{2C\e^\frac12}\right)^{-\frac{2}{p}},\quad\forall x\in K,
\end{equation*}
which implies \eqref{3.b-1}. Hence we finish the proof.
\end{proof}

From \eqref{3.b-1}, we shall deduce that $W_\e \to 0$ as $\e \to 0$  for any fixed compact subset $K$ of $\Omega$ (see the proof of Theorem \ref{th-existence} later).  To capture the behavior of $W_\e$ near $\partial\Omega,$ we introduce the Fermi coordinates for any $x\in\Omega_\delta$, that is
	$$X:(y,z)\in\partial\Omega\times\mathbb{R}^+\longmapsto x=X(y,z)=y+z\nu(y)\in\Omega_\delta,$$
	where $\nu$ is the unit normal vector on $\partial\Omega$, and $\Omega_\delta$ is defined in \eqref{Omega}. 
	There is a number $\delta_0>0$ such that for any $\delta\in(0,\delta_0)$, the map $X$ is {from $\Omega_{\delta}$ to a subset of $\mathcal{O}$ (cf. \cite[Remark 8.1]{kowalczyk2011giorgi}, where}
	\begin{equation*}
	\mathcal{O}=\{(y,z)\in\partial\Omega\times(0,2\delta)\}.
	\end{equation*}
	It follows that $X$ is actually a diffeomorphism onto its image $\mathcal{N}=X(\mathcal{O})$.  For any fixed $z$, we set
	\begin{equation*}
	\Gamma_z(y)=\{p\in\Omega\mid p=y+z\nu(y)\}.
	\end{equation*}
	It is straightforward to check that the distance between any point of $\Gamma_z(y)$ and $\partial\Omega$ is $|z|$. Hence we have the following results for the Laplacian operator in terms of Fermi coordinate shown in \cite[Lemma 6.1]{Lee} motivated by \cite[Lemma 10.5]{pacard2003constant}.
\begin{lemma}\label{Fermi}
		The Euclidean Laplacian $\Delta$ can be computed by a formula in terms of the coordinate $(y,z)\in\mathcal{O}$ as
		\begin{equation*}
		\Delta_x=\partial_z^2-H_{\Gamma_z(y)}\partial_z+\Delta_{\Gamma_z},\quad
		x=X(y,z),\quad (y,z)\in\mathcal{O},
		\end{equation*}
		where $\Gamma_z(y)$ is the submanifold
		\begin{equation*}
		\Gamma_z(y)=\left\{y+z\nu(y)\mid y\in\partial\Omega\right\},
		\end{equation*}
		and $H_{\Gamma_z(y)}$ is the mean curvature at the point in $\Gamma_z(y)$ and $\Delta_{\Gamma_z(y)}$ stands for the Beltrami-Laplacian operator on $\Gamma_z(y)$.
	\end{lemma}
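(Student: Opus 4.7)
The plan is to verify the decomposition of $\Delta_x$ by computing the metric of the Fermi coordinate system $(y,z)$ explicitly on $\mathcal{O}$ and then applying the Laplace--Beltrami formula in those coordinates. The key structural fact is that the pullback metric $X^{\ast} g_{\mathrm{Eucl}}$ is block-diagonal with a unit $z$-direction; once this is established the decomposition of $\Delta_x$ follows almost mechanically by splitting the $z$ and tangential derivatives.

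First I would verify this block structure. Differentiating $X(y,z)=y+z\nu(y)$ yields $\partial_z X=\nu(y)$ and $\partial_{y^{i}}X=e_{i}(y)+z\,\partial_{y^{i}}\nu(y)$, where $\{e_{i}\}$ is a local frame on $T_{y}\partial\Omega$. Because $|\nu|\equiv 1$ forces $\partial_{y^{i}}\nu(y)\in T_{y}\partial\Omega$, one obtains immediately $g_{zz}=1$ and $g_{zi}=0$. The remaining block $g_{ij}(y,z)=\langle\partial_{y^{i}}X,\partial_{y^{j}}X\rangle$ is precisely the induced metric on the parallel hypersurface $\Gamma_{z}(y)$, so $\sqrt{|g|}=\sqrt{|g_{\Gamma_{z}}|}$, $g^{zz}=1$, $g^{zi}=0$, and the $(i,j)$-block of $g^{\alpha\beta}$ is the inverse induced metric on $\Gamma_{z}$.

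Next I would invoke the standard intrinsic formula
\begin{equation*}
\Delta_x f=\frac{1}{\sqrt{|g|}}\,\partial_{\alpha}\!\left(\sqrt{|g|}\,g^{\alpha\beta}\,\partial_{\beta}f\right),
\end{equation*}
and separate the $z$-derivative from the tangential ones using the block-diagonal structure above. This produces
\begin{equation*}
\Delta_x f=\partial_z^{2}f+\partial_z\!\bigl(\log\sqrt{|g_{\Gamma_z}|}\bigr)\,\partial_z f+\Delta_{\Gamma_z}f,
\end{equation*}
so the whole content of the lemma reduces to identifying the coefficient of $\partial_{z}f$ with $-H_{\Gamma_{z}(y)}$.

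This last identification is the main obstacle and the only step that requires real care. I would derive it from the first variation of the induced metric under the normal flow generated by $\nu$: a direct computation gives $\partial_{z}g_{ij}=2\,\mathrm{II}_{ij}^{(z)}$, where $\mathrm{II}^{(z)}$ is the second fundamental form of $\Gamma_{z}$, and hence
\begin{equation*}
\partial_{z}\log\sqrt{|g_{\Gamma_{z}}|}=\tfrac{1}{2}g^{ij}\partial_{z}g_{ij}=\operatorname{tr}_{g_{\Gamma_z}}\mathrm{II}^{(z)}.
\end{equation*}
The delicate point is matching the sign with the convention for $H_{\Gamma_{z}(y)}$ used in \cite{Lee}: with the orientation of $\nu$ chosen so that $X(y,z)\in\Omega_{\delta}$ for $z>0$, and $H$ normalized so that the unit sphere has positive mean curvature, the trace above equals $-H_{\Gamma_{z}(y)}$. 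Once this sign is fixed consistently with the convention of Lemma 6.1 of \cite{Lee}, the three pieces $\partial_{z}^{2}$, $-H_{\Gamma_{z}(y)}\partial_{z}$, and $\Delta_{\Gamma_{z}}$ assemble into the stated formula.
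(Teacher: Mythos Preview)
The paper does not actually prove this lemma: it is stated and then attributed to \cite[Lemma 6.1]{Lee}, which in turn is motivated by \cite[Lemma 10.5]{pacard2003constant}. Your argument---block-diagonal structure of the pullback metric in Fermi coordinates, the Laplace--Beltrami formula, and identification of $\partial_z\log\sqrt{|g_{\Gamma_z}|}$ with $-H_{\Gamma_z(y)}$ via the first variation of area---is the standard correct derivation and is essentially what one would find in those references, so there is nothing to compare.
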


\begin{lemma}
\label{le4.2}
Let $\Omega$ be a smooth domain in $\mathbb{R}^n~(n\geq1)$ and $v_\e\in C^{2,\alpha}(\Omega)\cap C^0(\overline\Omega)$ be the unique solution of \eqref{3.bd1}.
Then there exist positive constants $\e_0$ and $\delta_0$ such that for any $\e\in(0,\e_0)$ and $\delta\in\left(0,\min\{\frac12,\delta_0\}\right)$,
it holds that
\begin{equation}
b_1\left(1+b_2\frac{\mathrm{dist}(x,\partial\Omega)}{\e^\frac12}\right)^{-\frac{2}{p}}\leq v_\e\leq b_3\left(1+b_4\frac{\mathrm{dist}(x,\partial\Omega)}{\e^\frac12}\right)^{-\frac{2}{p}}\quad \mbox{in}\quad \Omega_\delta,
\end{equation}
where the definition of $\Omega_{\delta}$ is given in \eqref{Omega}
and $b_1,\cdots,b_4$ are positive constants independent of $\e.$
\end{lemma}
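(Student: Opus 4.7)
The plan is to prove the two-sided bound by constructing explicit barriers depending only on the signed distance $z = \mathrm{dist}(x,\partial\Omega)$ and comparing them with $v_\e$ on $\Omega_\delta$ via the maximum principle. By Lemma~\ref{Fermi}, any function $\Phi(z)$ depending only on $z$ satisfies $\Delta\Phi(z) = \Phi''(z) - H_{\Gamma_z(y)}\Phi'(z)$, and the smoothness of $\partial\Omega$ combined with the restriction to $\Omega_\delta$ yields a uniform bound $|H_{\Gamma_z(y)}|\leq M$. This reduces the PDE in $\Omega_\delta$ to a one-dimensional ODE perturbed by a lower-order transport term with an extra factor of $\e^{1/2}$ arising from the barrier scaling.

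For the super-solution I would set $\bar v_\e(z) = A_s\left(1+z/(C_s\e^{1/2})\right)^{-2/p}$ with $A_s\ge b$. Writing $\xi = z/(C_s\e^{1/2})$, a direct computation gives
\begin{equation*}
\e\Delta \bar v_\e - \bar v_\e^{1+p} = A_s\left[\tfrac{2(p+2)}{p^2 C_s^2} - A_s^{p}\right](1+\xi)^{-\frac{2}{p}-2} + \tfrac{2A_s H_{\Gamma_z}\e^{1/2}}{p C_s}(1+\xi)^{-\frac{2}{p}-1}.
\end{equation*}
Choose $C_s$ so that $A_s^p > 2(p+2)/(p^2 C_s^2)$ with a strict margin; then for $\e$ small the negative principal term dominates the $O(\e^{1/2})$ curvature contribution uniformly on $\Omega_\delta$, so $\bar v_\e$ is a super-solution. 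On $\partial\Omega$, $\bar v_\e = A_s \ge b = v_\e$; on the inner boundary $\{z=\delta\}$, Lemma~\ref{le4.1} applied with $K = \overline{\Omega_\delta^c}$ gives $v_\e \le C_K \e^{1/p}$, while $\bar v_\e(\delta) \gtrsim A_s(C_s\e^{1/2}/\delta)^{2/p}$, so enlarging $A_s$ if necessary yields $\bar v_\e \ge v_\e$ on $\partial\Omega_\delta$, and the maximum principle delivers the upper bound.

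For the sub-solution I would use the truncated barrier
\begin{equation*}
\underline v_\e(z) = A_i\left[\left(1+\tfrac{z}{C_i\e^{1/2}}\right)^{-2/p} - \left(1+\tfrac{\delta}{C_i\e^{1/2}}\right)^{-2/p}\right]_+,
\end{equation*}
with $A_i \le b$ and $C_i$ chosen so that $A_i^p < 2(p+2)/(p^2 C_i^2)$ with margin. Since $\underline v_\e \le A_i(1+z/(C_i\e^{1/2}))^{-2/p}$, an analogous computation with the bracket now strictly positive shows $\underline v_\e$ is a sub-solution on $\{\underline v_\e > 0\}$ once $\e$ is small enough to absorb the curvature term. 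The boundary comparison is automatic: $\underline v_\e \equiv 0 \le v_\e$ on $\{z=\delta\}$ by the strong maximum principle applied to \eqref{3.bd1}, and $\underline v_\e \le A_i \le b = v_\e$ on $\partial\Omega$. Thus $v_\e \ge \underline v_\e$ on $\Omega_\delta$, and since the subtracted constant is $O(\e^{1/p})$ and much smaller than $A_i(1+z/(C_i\e^{1/2}))^{-2/p}$ in the bulk of $\Omega_\delta$, a small adjustment of $A_i$ and $C_i$ recovers a lower bound of the stated form.

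The main obstacle is that the Fermi-coordinate Laplacian produces a mean-curvature term scaling as $\e^{1/2}(1+\xi)^{-2/p-1}$, which is of different order than the dominant $(1+\xi)^{-2/p-2}$ contribution and cannot be cancelled algebraically; it must instead be absorbed by enforcing strict inequalities in the balance $A^p \gtrless 2(p+2)/(p^2 C^2)$ together with the smallness of $\e$. A secondary technical point is controlling $v_\e$ on the inner boundary $\{z=\delta\}$, which for the super-solution is handled by Lemma~\ref{le4.1} with a suitable compact set and for the sub-solution is arranged by construction through the truncation.
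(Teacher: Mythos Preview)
Your overall strategy---barriers of the form $A(1+z/(C\e^{1/2}))^{-2/p}$ compared via Fermi coordinates, with Lemma~\ref{le4.1} supplying the inner-boundary control for the super-solution---is the same as the paper's. Two points need tightening.

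First, the claim that the curvature term is absorbed ``for $\e$ small'' is not accurate as stated. After factoring out the principal decay, the curvature contribution relative to the main bracket is $\tfrac{2H}{pC}\,\e^{1/2}(1+\xi)=\tfrac{2H}{pC}(\e^{1/2}+z/C)$, and on $\Omega_\delta$ this is bounded by $\tfrac{2M}{pC}(\e^{1/2}+\delta/C)$. The term $\delta/C$ does not vanish with $\e$; it must be controlled by taking $C$ large (for the super-solution) or by taking $\delta$ small (for the sub-solution, where you want $C_i$ small to make the bracket positive). The paper makes this explicit: for the upper bound it chooses $\Lambda$ (your $C_s$) large enough that $\tfrac{1}{\Lambda^2}+\tfrac{p\delta M}{(p+2)\Lambda^2}+O(\e^{1/2})-1<0$, and for the lower bound it restricts $\delta$. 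Your ``strict margin together with smallness of $\e$'' phrasing obscures the fact that an additional smallness condition on $\delta$ (or largeness of $C$) is doing the real work.

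Second, and more seriously, your sub-solution satisfies $\underline v_\e(\delta)=0$, so the comparison $v_\e\ge\underline v_\e$ yields nothing at $z=\delta$, whereas the target lower bound $b_1(1+b_2\delta/\e^{1/2})^{-2/p}\sim c\,\e^{1/p}$ is strictly positive there. No ``small adjustment of $A_i$ and $C_i$'' can repair this, since $\underline v_\e(\delta)=0$ identically. The remedy is to run the comparison on a strictly larger neighbourhood $\Omega_{2\delta}$ and then restrict: for $z\le\delta$ one has $f(z)-f(2\delta)\ge(1-2^{-2/p}+o_\e(1))f(z)$, which gives the stated form on $\Omega_\delta$. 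The paper does exactly this, using the multiplicative truncation $v_{\e,l}=(2\delta-z)(1+z/(c_p\e^{1/2}))^{-2/p}$ on $\Omega_{2\delta}$ instead of your subtractive one; it vanishes at $z=2\delta$ and is bounded below by $\delta\,(1+z/(c_p\e^{1/2}))^{-2/p}$ on $\Omega_\delta$.
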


\begin{proof}
When $n=1$, we can repeat almost the same arguments of Lemma \ref{le4.1} for $b=1$ to derive the upper bound, just replacing the barrier function by the following one
$$v_{\e,s}=b\left(\left(1+\frac{b^{\frac{p}{2}}(x+1)}{c_p\e^\frac12}\right)^{-\frac{2}{p}}
+\left(1+\frac{b^{\frac{p}{2}}(1-x)}{c_p\e^\frac12}\right)^{-\frac{2}{p}}\right).$$
While for the lower bound, we set the barrier function by
$$v_{\e,l}=\frac{b}{2}\left(\left(1+\frac{b^{\frac{p}{2}}(x+1)}{c_p\e^\frac12}\right)^{-\frac{2}{p}}
+\left(1+\frac{b^{\frac{p}{2}}(1-x)}{c_p\e^\frac12}\right)^{-\frac{2}{p}}\right).$$
Then
\begin{align*}
\e\Delta v_{\e,l}-v_{\e,l}^{1+p}=\frac{b^{1+p}}{2^{1+p}}&\left(2^{p}
\left(\left(1+\frac{b^{\frac{p}{2}}(x+1)}{c_p\e^\frac12}\right)^{-\frac{2+2p}{p}}
+\left(1+\frac{b^{\frac{p}{2}}(1-x)}{c_p\e^\frac12}\right)^{-\frac{2+2p}{p}}\right)\right.\\
&\left.-\left(\left(1+\frac{b^{\frac{p}{2}}(x+1)}{c_p\e^\frac12}\right)^{-\frac{2}{p}}
+\left(1+\frac{b^{\frac{p}{2}}(1-x)}{c_p\e^\frac12}\right)^{-\frac{2}{p}}\right)^{1+p}\right).
\end{align*}
Using the classical inequality $(a_1+a_2)^{1+p}\leq 2^{p}(a_1^{1+p}+a_2^{1+p})$ with $a_1,a_2>0$, we directly see that the right hand side of the above equation is positive. Then by the fact that $v_{\e,l}\leq b$ at $x=\pm 1$ and comparison argument, it follows  that $v_{\e,l}$ is a sub-solution, and the lower bound for $v_\e$ is provided.

Now we give the proof for $n\geq2$. Without loss of generality, we may assume that $\Omega$ is a simply connected domain for simplicity, while the case for multiply connected domain can be proved similarly. We first derive the lower bound for $v_\e$. Since $\Omega$ is simply connected, $\partial\Omega$ is a smooth connected manifold of dimension $n-1$.  We set $v_{\e,l}$ by
$$v_{\e,l}(x)=(2\delta-{\rm dist}(x,\partial\Omega))\left(1+\frac{{\rm dist}(x,\partial\Omega)}{c_p\e^\frac12}\right)^{-\frac2p}\quad\mbox{for}\quad x\in\Omega_{2\delta}.$$
It is easy to see that
\begin{equation}
v_{\e,l}(x)=\begin{cases}
2\delta, \quad &\mbox{on}\quad \partial\Omega,\\
0, &\mbox{on}\quad\partial\Omega_{2\delta}\setminus\partial\Omega.	
\end{cases}
\end{equation}
A straightforward computation based on Lemma \ref{Fermi} gives
\begin{equation}
\begin{aligned}
\e\Delta v_{\e,l}-v_{\e,l}^{p+1}=~&(\e\partial_z^2-\e H_{\Gamma_z(y)}\partial_z+\e\Delta_{\Gamma_z(y)})
(2\delta-z)\left(1+\frac{z}{c_p\e^\frac12}\right)^{-\frac2p}\\
&-(2\delta-z)^{p+1}\left(1+\frac{z}{c_p\e^\frac12}\right)^{-2-\frac2p}\\
=~&\e^\frac12 H_{\Gamma_z(y)}\left(\frac{4}{p c_p H_{\Gamma_z(y)}}+\frac{2(2\delta-z)}{p c_p}+\e^\frac12 + \frac{z}{c_p}\right)\left(1+\frac{z}{c_p\e^\frac12}\right)^{-1-\frac2p}\\
&+(2\delta-z)(1-(2\delta-z)^{p})\left(1+\frac{z}{c_p\e^\frac12}\right)^{-2-\frac2p}.
\end{aligned}
\end{equation}
We can choose $\delta$ and $\e$ small enough such that
$$\frac{4}{p c_p}+\frac{2(2\delta-z)}{p c_p} H_{\Gamma_z(y)}+\e^\frac12 H_{\Gamma_z(y)} + \frac{z}{c_p} H_{\Gamma_z(y)}\geq 0$$
and $1-(2\delta-z)^p\geq0$ for $z\in(0,2\delta)$. Then
\begin{equation*}
\e\Delta v_{\e,l}-v_{\e,l}^{p+1}\geq0\quad\mbox{in}\quad\Omega_{2\delta}.
\end{equation*}
Together with that $v_\e\geq v_{\e,l}$ on $\partial\Omega_{2\delta}^c$ and the classical comparison argument we have
$$v_\e\geq v_{\e,l} \quad\mbox{in}\quad\overline{\Omega_{2\delta}},$$
which implies that
$$v_\e\geq\delta\left(1+\frac{z}{c_p\e}\right)^{-\frac2p}\quad\mbox{in}\quad\overline{\Omega_{\delta}}.$$
While for the upper bound, we set
$$v_{\e,b}=b\left(1+\frac{b^{\frac{p}2}{\rm dist}(x,\partial\Omega)}{\Lambda c_p\e^{\frac12}}\right)^{-\frac2p},$$
where $\Lambda $ is a large constant to be determined later. A direct computation yields that
\begin{equation}
\label{3.upper-b}
\begin{aligned}
&\e \Delta v_{\e,b}-v_{\e,b}^{p+1}\\
&=b(\e\partial_z^2-\e H_{\Gamma_z(y)}\partial_z+\e\Delta_{\Gamma_z(y)})
\left(1+\frac{b^{\frac{p}2}z}{\Lambda c_p\e^\frac12}\right)^{-\frac2p}
-b^{1+p}\left(1+\frac{b^{\frac{p}2}z}{\Lambda c_p\e^\frac12}\right)^{-\frac{2+2p}p}\\
&=b^{1+p}\left(\frac{1}{\Lambda ^2}-1\right)\left(1+\frac{b^{\frac{p}2}z}{\Lambda c_p\e^\frac12}\right)^{-\frac{2+2p}p}+\e^\frac12 b^{1+\frac12p}\frac{2}{p \Lambda c_p}H_{\Gamma_z(y)}\left(1+\frac{b^{\frac{p}2}z}{\Lambda c_p\e^\frac12}\right)^{-\frac{2+p}p}\\
&=\left(\frac{1}{\Lambda ^2}+\frac{2\e^\frac12}{b^{\frac12p}p \Lambda c_p}H_{\Gamma_z(y)}+\frac{p zH_{\Gamma_z(y)}}{(p+2)\Lambda^2}-1\right)
b^{1+p}\left(1+\frac{b^{\frac{p}2}z}{\Lambda c_p\e^\frac12}\right)^{-\frac{2+2p}p}.
\end{aligned}
\end{equation}
Thus, for any $\e\in(0,1)$ and $z\in(0,2\delta)$ we can always choose $C_{\delta,1}$ sufficiently large such that for $\Lambda\in (C_{\delta,1},+\infty)$, the term in the bracket of the right hand-side is negative. While on $\partial\Omega_{\delta}$, we have $v_{\e,b}=v_{\e}$ on $\partial\Omega$. On $\partial\Omega_{\delta}\setminus\partial\Omega$, we have $v_{\e}\leq C_{\overline{\Omega_{\delta}^c}} \e^{1/p}$ due to Lemma \ref{le4.1}. Then we choose $C_{\delta,2}$ large enough such that for any $\Lambda>C_{\delta,2}$, it holds that
\begin{equation}
\label{3.boundary-b}
b\left(1+\frac{b^{\frac{p}2}\delta}{\Lambda c_p\e^\frac12}\right)^{-\frac2p}\geq C_{\overline{\Omega_{\delta}^c}}\e^{\frac1p}.
\end{equation}
Then we choose $\Lambda=\max\{C_{\delta,1},~C_{\delta,2}\}$ in the definition of $w_{\e,b}$ and using \eqref{3.upper-b}-\eqref{3.boundary-b} we see that $w_{\e,b}$ provides a super-solution to \eqref{3.bd1} in $\Omega_{\delta}$. We finally choose
$$
b_1=\delta,\quad b_2=c_p,\quad b_3=b,\quad b_4=\frac{b^{\frac{p}2}}{\Lambda c_p}
$$
to finish the proof.
\end{proof}

Returning the original nonlocal problem \eqref{4} which can be written as
\begin{eqnarray}\label{4n2}
\left\{
\begin{array}{lll}
 \e \lambda_\e\Delta W_\e= W_\e^{\chi+1},& x\in  \Omega,\\[1mm]
W_\e=b>0,&x\in \partial \Omega,
\end{array}
\right.
\end{eqnarray}
where $\s_\e=\frac{1}{m}\int_{\Omega} W_\e^\chi dx>0$ is a constant. Then
we have the following result.

\begin{lemma}
\label{le3.thick}
Let $W_\e$ be the solution to \eqref{4n2}. Then there exist two positive constants $d_1,~d_2$ such that
\begin{equation*}
d_1\e\leq  \int_\Omega {W_\e}^{p}dx\leq d_2\e.
\end{equation*}
Moreover, for any $\delta>0$ as in Lemma \ref{le4.2}, we obtain a precise behavior at the boundary given by
\begin{equation}\label{profile-1aux}
b_5\left(1+b_6\frac{\mathrm{dist}(x,\partial\Omega)}{\e}\right)^{-\frac{2}{p}}\leq {W_\e}\leq b_7\left(1+b_8\frac{\mathrm{dist}(x,\partial\Omega)}{\e}\right)^{-\frac{2}{p}}\quad \mbox{in}\quad \Omega_\delta,
\end{equation}
for some positive constants $b_5,b_6,b_7,b_8$ independent of $\e$.
\end{lemma}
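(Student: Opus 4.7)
\medskip

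\textbf{Plan of proof of Lemma \ref{le3.thick}.} The strategy is to treat $\tilde\e := \e\lambda_\e$ as a single ``effective'' diffusion coefficient, apply the local estimates of Lemmas \ref{le4.1}--\ref{le4.2} with $\e$ replaced by $\tilde\e$, and then close the loop by using the mass-type constraint $m\lambda_\e = \int_\Omega W_\e^p\,dx$ to determine the magnitude of $\tilde\e$ in terms of $\e$.

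\smallskip

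\emph{Step 1: reformulation as a local problem.} Observe that \eqref{4n2} may be written as $\tilde\e\Delta W_\e = W_\e^{p+1}$ in $\Omega$ with $W_\e = b$ on $\partial\Omega$, which is precisely the problem \eqref{3.bd1} with $\e$ replaced by $\tilde\e$. Hence Lemma \ref{le4.2} applies, yielding positive constants $b_1,\ldots,b_4$ (independent of $\tilde\e$) such that
\begin{equation*}
b_1\Bigl(1 + b_2\tfrac{\mathrm{dist}(x,\partial\Omega)}{\tilde\e^{1/2}}\Bigr)^{-2/p}
\;\le\; W_\e(x) \;\le\;
b_3\Bigl(1 + b_4\tfrac{\mathrm{dist}(x,\partial\Omega)}{\tilde\e^{1/2}}\Bigr)^{-2/p}
\quad\text{in }\Omega_\delta,
\end{equation*}
and Lemma \ref{le4.1} gives $\|W_\e\|_{L^\infty(\Omega_\delta^c)} \le C_\delta\,\tilde\e^{1/p}$.

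\smallskip

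\emph{Step 2: two-sided estimate for $\int_\Omega W_\e^p\,dx$ in terms of $\tilde\e$.} I would split the integral as $\int_{\Omega_\delta} W_\e^p + \int_{\Omega_\delta^c} W_\e^p$. For the interior piece, the $L^\infty$-bound from Lemma \ref{le4.1} yields $\int_{\Omega_\delta^c} W_\e^p\,dx \le C\tilde\e$. For the boundary-layer piece, I would introduce the Fermi coordinates $(y,z)$ from the paragraph preceding Lemma \ref{Fermi} and write $dx = J(y,z)\,dS(y)\,dz$ with a smooth Jacobian $J$ bounded above and below on $\partial\Omega\times[0,\delta]$. Plugging in the bounds of Step 1 (raised to the power $p$, the exponent collapses to $-2$), the key one-dimensional integral
\begin{equation*}
\int_0^\delta \Bigl(1 + c\,\tfrac{z}{\tilde\e^{1/2}}\Bigr)^{-2}\,dz
= \tfrac{\tilde\e^{1/2}}{c}\Bigl(1-(1+c\delta/\tilde\e^{1/2})^{-1}\Bigr)
\end{equation*}
is of size $\tilde\e^{1/2}$ as $\tilde\e\to 0$. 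This yields constants $C_1,C_2>0$ (depending only on $\Omega, b, p, \delta$) such that
\begin{equation*}
C_1\,\tilde\e^{1/2} \;\le\; \int_\Omega W_\e^p\,dx \;\le\; C_2\,\tilde\e^{1/2}
\end{equation*}
for all sufficiently small $\tilde\e$ (so that the $O(\tilde\e)$ interior contribution is absorbed into the $O(\tilde\e^{1/2})$ boundary contribution).

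\smallskip

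\emph{Step 3: solving the self-consistent relation and conclusion.} By definition $\tilde\e = \e\lambda_\e$ with $m\lambda_\e = \int_\Omega W_\e^p\,dx$, so Step 2 reads
\begin{equation*}
C_1\,(\e\lambda_\e)^{1/2} \;\le\; m\lambda_\e \;\le\; C_2\,(\e\lambda_\e)^{1/2},
\end{equation*}
whence $\lambda_\e \sim \e$ and consequently $\tilde\e^{1/2} \sim \e$. Feeding this back into the bound of Step 2 gives $d_1\e \le \int_\Omega W_\e^p\,dx \le d_2\e$, while substituting $\tilde\e^{1/2}\sim\e$ into the pointwise bound of Step 1 and relabeling constants produces the desired estimate \eqref{profile-1aux} with new constants $b_5,\ldots,b_8$ independent of $\e$.

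\smallskip

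\emph{Main obstacle.} The delicate point is not any single inequality but the self-consistency of the loop: the profile estimate in Step 1 depends on $\tilde\e$, which itself depends on $\int_\Omega W_\e^p\,dx$, which in turn is controlled via the very profile estimate we are trying to use. The argument closes cleanly only because the upper and lower barriers in Lemma \ref{le4.2} share the \emph{same} $\tilde\e^{1/2}$ scaling and their $L^p$-integrals differ only by multiplicative constants; this is exactly what lets Step 2 produce a two-sided power-law relation between $\lambda_\e$ and $\e$ rather than only an inequality in one direction.
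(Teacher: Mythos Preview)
Your proposal is correct and matches the paper's proof essentially step for step: recast \eqref{4n2} as the local problem \eqref{3.bd1} with effective diffusion $\tilde\e=\e\lambda_\e$, integrate the barriers from Lemmas~\ref{le4.1}--\ref{le4.2} in Fermi coordinates to obtain $\int_\Omega W_\e^p\,dx\sim\tilde\e^{1/2}$, and close via $m\lambda_\e=\int_\Omega W_\e^p\,dx$ to conclude $\lambda_\e\sim\e$. The one point you leave implicit but the paper states explicitly is why $\tilde\e$ is small enough to invoke Lemmas~\ref{le4.1}--\ref{le4.2} in the first place: this comes from the trivial maximum-principle bound $W_\e\le b$, which gives $m\lambda_\e\le b^p|\Omega|$ and hence $\tilde\e=\e\lambda_\e\to 0$.
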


\begin{proof}
By Lemma \ref{le4.2},  we can find four positive constants $b_5,b_6,b_7,b_8$ which are independent of $\e$, such that
\begin{equation}\label{profile-1}
b_5\left(1+b_6\frac{\mathrm{dist}(x,\partial\Omega)}{\e^\frac12\s_\e^\frac12}\right)^{-\frac{2}{p}}\leq {W_\e}\leq b_7\left(1+b_8\frac{\mathrm{dist}(x,\partial\Omega)}{\e^\frac12\s_\e^\frac12}\right)^{-\frac{2}{p}}\quad \mbox{in}\quad \Omega_\delta,
\end{equation}
while in $\overline{\Omega^c_\delta}$, by the equation \eqref{3.b-1} we can find a positive constant $C_{\overline{\Omega_{\delta}^c}}$ such that
\begin{equation}\label{4.16}
\max_{\overline{\Omega_\delta^c}}{W_\e}(x)\leq C_{\overline{\Omega_{\delta}^c}}\e^\frac1p\s_\e^\frac1p.
\end{equation}
By \eqref{profile-1}-\eqref{4.16}, we can get a lower and upper bound for the term $\int_\Omega {W_\e}^{p}dx$, i.e.,
\begin{equation}\label{4.17}
\begin{aligned}
m \s_\e=\int_\Omega {W_\e}^{p}dx\geq~&\int_{\Omega_\delta}b_5^p\left(1+b_6\frac{\mathrm{dist}(x,\partial\Omega)}{\e^\frac12\s_\e^\frac12}\right)^{-2}dx\\
\geq~&\int_0^\delta\int_{\Gamma_z(y)}b_5^p\left(1+b_6\frac{z}{\e^\frac12\s_\e^\frac12}\right)^{-2}dydz\\
\geq~& b_5^p\e^\frac12\s_\e^\frac12\min_{z\in(0,~\delta)}|\Gamma_z(y)|+O(\e \s_\e ),
\end{aligned}
\end{equation}
and
\begin{equation}\label{4.18}
\begin{aligned}
m \s_\e=\int_\Omega {W_\e}^{p}dx\leq~&\int_{\Omega_\delta}b_7^p\left(1+b_8\frac{\mathrm{dist}(x,\partial\Omega)}{\e^\frac12\s_\e^\frac12}\right)^{-2}dx+
\int_{\Omega_\delta^c}C_{\overline{\Omega_\delta^c}}^p\e\s_\e\\
\leq~&\int_0^\delta\int_{\Gamma_z(y)}b_7^p\left(1+b_8\frac{z}{\e^\frac12\s_\e^\frac12}\right)^{-2}dydz+O(\e \s_\e )\\
\leq~& b_7^p\e^\frac12\s_\e^\frac12\max_{z\in(0,~\delta)}|\Gamma_z(y)|+O(\e \s_\e).
\end{aligned}
\end{equation}
As a consequence of \eqref{4.17}-\eqref{4.18}, we derive that $\e\sim\s_\e$ since $m\s_\e\leq b^p |\Omega|$ by maximum principle comparing to the constant supersolution $b>0$. Then the conclusion of the lemma follows directly.
\end{proof}

With the above preparation, we give the proof of Theorem \ref{th-existence}.
\begin{proof}[\bf Proof of Theorem \ref{th-existence}.]
By \eqref{profile-1aux} in Lemma \ref{le3.thick}, we get the profile for $W_\e$ given in \eqref{eq1.3-1} for any small constant $\delta>0$. From \eqref{3}, we know that
\begin{equation}\label{4.19}
U_\e=m\frac{W_\e^p}{\int_{\Omega}W_\e^pdx}.
\end{equation}
Using Lemma \ref{le3.thick} we obtain the profile for $U_\e$ given in \eqref{eq1.3-1}. Applying Lemma \ref{le4.1} to \eqref{4n2},  we immediately observe that
\begin{equation*}
\|W_\e\|_{L^\infty(\Omega_{\delta}^c)} \leq C\e^{\frac2p}.
\end{equation*}
Then using \eqref{4.19} and Lemma \ref{le3.thick} again, we derive that
\begin{equation*}
U_\e\leq C\e\quad \mbox{for}\quad x\in \Omega_{\delta}^c.
\end{equation*}
Now it remains to show that the boundary-layer thickness is the order $O(\e)$ to finish the proof. Let us denote $\ell_\e={\rm dist}(x_{in},\partial\Omega)$ for any interior point $x_{in}$ of $\Omega$. We just need to check the conditions of Definition \ref{defi-bl} with $\mu(\e)\sim O(\e)$.

{\it Case 1}: If $\lim_{\e\to0}\frac{\ell_\e}{\e}=0$, we set $w^\e(y)={W}_\e(\e y)$. Then $w^\e$ satisfies
\begin{equation}\label{eqn1}
\Delta_y w^\e(y)=\frac{m\e}{\int_{\Omega}W_\e^p(x)dx}({w}^\e(y))^{p+1},
\end{equation}
in $\Omega^\e=\frac1\e \Omega$. Recall that, by maximum principle, we have
$$
\|w^\e(y)\|_{L^\infty(\Omega^\e)}=\|{W}_\e(x)\|_{L^\infty(\Omega)}\leq b.
$$
Following the standard elliptic estimate and the fact that the right-hand side of \eqref{eqn1} is uniformly bounded in $\Omega$ by Lemma \ref{le3.thick}, we get
\begin{equation}
|w^\e(y)|_{L^\infty(\Omega^\e)}+|D_yw^\e(y)|_{L^\infty(\Omega^\e)}\leq C,
\end{equation}
where $C>0$ is a universal constant independent of $\e$. This implies that
$$|D_x{W}_\e(x)|\leq C\e^{-1}.$$
Let $x_0\in\partial\Omega$ be the boundary point such that
$|x_0-x_{\rm in}|={\rm dist}(x_{\rm in},\partial\Omega).$
We get that $|x_0-x_{\rm in}|=o(\e)$ from $\lim_{\e\to0}\frac{\ell_\e}{\e}=0$, then
\begin{equation}
|{W}_\e(x_0)-{W}_\e(x_{\rm in})|\leq C|D_x{W}_\e||x_0-x_{\rm in}|\leq C\e^{-1}|x_0-x_{\rm in}|=o_\e(1).
\end{equation}
This implies that $\lim_{\e\to0}{W}_\e(x_{\rm in})=b$, verifying the statement in Definition \ref{defi-bl}-(1).
\medskip

{\it Case 2}: $\lim_{\e\to0}\frac{\ell_\e}{\e}=L$. In this case, we first show that $\lim\limits_{\e\to 0}{W}_\e(x_{\rm in})>0$. Indeed, by Lemma \ref{le3.thick} and $\lim_{\e\to0}\ell_\e/\e =L,$ we have
$$\lim_{\e\to0}{W}_\e(x_{\rm in})\geq b_5\left(1+b_6\frac{m}{d_1}L\right)^{-\frac{2}{p}}>0.$$
To show $\lim\limits_{\e\to0}{W}_\e(x_{\rm in})<b$, we claim that
\begin{equation}\label{eq0}
{W}_{\e}(x)\leq b\left(1+C_0\frac{\mathrm{dist}(x,\partial\Omega)}{\e}\right)^{-\frac{2}{p}}\quad\mbox{in}\quad \Omega_\delta
\end{equation}
for some suitable positive constant $C_0$. Let $d_2$ be defined in Lemma \ref{le3.thick} and ${W}_{\e,b}$ be the solution of the following equation
\begin{equation}\label{eqnn}
\begin{cases}
\e^2\Delta {W}=\frac{m}{d_2}{W}^{1+p}\quad &\mbox{in}\quad\Omega,\\
{W}=b &\mbox{on}\quad \partial\Omega.
\end{cases}
\end{equation}
By the maximum principle, we get that
${W}_\e\leq {W}_{\e,b}.$
Now we shall prove that
$${W}_{\e,b}\leq {W}_\delta:=b\left(1+b_{9}\frac{\mathrm{dist}(x,\partial\Omega)}{\e}\right)^{-\frac{2}{p}} \quad\mbox{in}\quad \Omega_\delta$$
for some suitable positive constant $b_{9}$. Indeed first we can always choose $b_{9}$ small enough such that
\begin{equation}\label{eq1}
{W}_{\e,b}\leq b\left(1+b_{9}\frac{\mathrm{dist}(x,\partial\Omega)}{\e}\right)^{-\frac{2}{p}}\quad\mbox{on}\quad \partial\Omega_\delta \setminus \partial\Omega.
\end{equation}
To see this, we denote $\Omega_*=\Omega\setminus\Omega_\delta$. Then by Lemma \ref{le4.1} applied to \eqref{eqnn}, we get
\begin{equation}\label{eq2}
\max_{\overline{\Omega_*}}W_{\e,b}\leq C_{\overline{\Omega_*}} \e^{\frac{2}{p}}.
\end{equation}
Then for each $x\in \partial\Omega_\delta \setminus \partial\Omega$, it has $\delta=\mathrm{dist}(x,\partial\Omega)$. By choosing $q_9$ sufficiently small, we will have
\begin{equation}\label{eq3}
 C_{\overline{\Omega_*}} \e^{\frac{2}{p}} \leq b\left(1+b_{9}\frac{\mathrm{dist}(x,\partial\Omega)}{\e}\right)^{-\frac{2}{p}}.
\end{equation}
Then \eqref{eq1} follows from \eqref{eq2} and \eqref{eq3}.  Since ${W}_{\e,b}={W}_\delta=b$ at $\partial\Omega$, we conclude that
$${W}_{\e,b}\leq W_\delta:=b\left(1+b_{9}\frac{\mathrm{dist}(x,\partial\Omega)}{\e}\right)^{-\frac{2}{p}}\mbox{on}\quad \partial\Omega_\delta.$$
By a direct computation, we have
\begin{equation}\label{blabla}
\begin{aligned}
\e^2\Delta {W}_\delta-\frac{m}{d_2}{W}_\delta^{1+p}=~&\left(c_p^2b_{9}^2b+\frac2p\e b_9 H_{\Gamma_z(y)}\left(1+b_{9}\frac{\mathrm{dist}(x,\partial\Omega)}{\e}\right)-\frac{m}{d_2}b^{1+p}\right)\\
&\times\left(1+b_{9}\frac{\mathrm{dist}(x,\partial\Omega)}{\e}\right)^{-2-\frac{2}{p}} \quad\mbox{in}\quad \Omega_\delta.
\end{aligned}
\end{equation}
Since $b_{9}$ is chosen to be sufficiently small, the first bracket in the right-hand side of \eqref{blabla} can be made negative.  By the comparison principle, we infer that
$$
{W}_\e\leq {W}_{\e,b}\leq {W}_\delta\quad\mbox{in }\Omega_\delta.
$$
Hence the claim \eqref{eq0} is proved by identifying $C_0$ with $q_9$ and we get that
$$\lim_{\e\to0}{W}_{\e}(x_{\rm in})\leq b\left(1+C_0L\right)^{-\frac{2}{p}}<b.$$
Thus Definition \ref{defi-bl}-(2) is verifed.
\medskip

{\it Case 3}: $\lim_{\e\to0}\frac{\ell_\e}{\e}=+\infty$. The conclusion in Definition \ref{defi-bl}-(3) is a direct consequence of Lemma \ref{le4.1} in $\Omega_\delta$ and Lemma \ref{le4.2} in $\Omega_\delta^c$.

Collecting the above three cases, we complete the proof.
\end{proof}

\section{The radial case}
In this section, we consider the special case $\Omega=B_R(0):=B_R$ in $\mathbb{R}^n(n\geq 1)$ to find the refined solution structure near the boundary as $\e \to 0$. With this, we can explore how the radius of the domain (and hence the boundary curvature) affects the boundary-layer profile and thickness, and further show the asymptotic profile of the radial steady state as $p \to \infty$ (namely the chemotactic sensitivity is very strong).

\subsection{Asymptotic profile near the boundary}
We first establish the following result.
\begin{lemma}\label{Thm-steady}
Given $b>0$, the system \eqref{2} has a unique smooth positive solution $(U,W)$ that is radially symmetric in $B_R(0)$. Moreover, $(U,W)$ satisfies $U_r>0$ and $W_r>0$ with $r:=|x|$.
\end{lemma}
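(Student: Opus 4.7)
The plan is to leverage the uniqueness statement in Theorem~\ref{th1.1} to inherit radial symmetry on a ball almost for free, and then to extract the strict monotonicity $W_\e'>0$ and $U_\e'>0$ from the ODE satisfied by the radial profile of $W_\e$.

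For radial symmetry, given the unique solution $W_\e$ to \eqref{4n2} on $\Omega=B_R(0)$, pick any orthogonal transformation $T\in O(n)$ and set $\widetilde W(x):=W_\e(Tx)$. Because $B_R(0)$ is invariant under $T$, the Euclidean Laplacian commutes with $T$, the Dirichlet datum $b$ is constant on $\partial B_R(0)$, and a rotation preserves Lebesgue measure so that $\int_{B_R}\widetilde W^{p}\,dx=\int_{B_R}W_\e^{p}\,dx$, the function $\widetilde W$ again solves \eqref{4n2} with the same parameters. The uniqueness part of Theorem~\ref{th1.1} then forces $\widetilde W\equiv W_\e$, and since $T$ was arbitrary in $O(n)$, $W_\e$ is radial. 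The algebraic identity $U_\e=\lambda W_\e^{p}$ from \eqref{3} immediately yields radial symmetry of $U_\e$ as well.

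Writing $r=|x|$ and still denoting the radial profile by $W_\e$, equation \eqref{4n2} reduces to
\begin{equation*}
\varepsilon\lambda_\e\Bigl(W_\e''(r)+\frac{n-1}{r}W_\e'(r)\Bigr)=W_\e(r)^{p+1},\qquad 0<r<R,
\end{equation*}
with $W_\e(R)=b$ and $W_\e'(0)=0$ (the Neumann condition at the origin is forced by $C^1$ smoothness of the radial representation). Multiplying by $r^{n-1}$ rewrites this as
\begin{equation*}
\varepsilon\lambda_\e\bigl(r^{n-1}W_\e'(r)\bigr)'=r^{n-1}W_\e(r)^{p+1}>0\quad\text{on }(0,R),
\end{equation*}
since $W_\e>0$ on $\overline{B_R}$ and $\lambda_\e>0$ by Theorem~\ref{th1.1}. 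Integrating from $0$ to $r$ and using $W_\e'(0)=0$ (the prefactor $r^{n-1}$ also vanishes at $r=0$ when $n\geq 2$) gives
\begin{equation*}
\varepsilon\lambda_\e\,r^{n-1}W_\e'(r)=\int_0^{r} s^{n-1}W_\e(s)^{p+1}\,ds>0,\qquad r\in(0,R],
\end{equation*}
so $W_\e'(r)>0$ throughout $(0,R]$. Differentiating $U_\e=\lambda W_\e^{p}$ from \eqref{3} then produces $U_\e'(r)=\lambda p\,W_\e(r)^{p-1}W_\e'(r)>0$.

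The only potentially delicate point is verifying that the nonlocal term in \eqref{4n2} is invariant under rotations of the domain; this is automatic on a ball because every $T\in O(n)$ acts as an isometry on $B_R(0)$, so the main conclusions follow from Theorem~\ref{th1.1} together with an elementary one-dimensional integration. I therefore do not foresee any serious obstacle in this lemma itself.
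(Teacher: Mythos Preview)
Your proof is correct; the monotonicity argument is essentially the same as the paper's (integrate $\bigl(r^{n-1}W_r\bigr)'>0$ from the origin), but you obtain radial symmetry by a different route. The paper freezes the nonlocal coefficient as a constant $\lambda_\e$ and invokes the Gidas--Ni--Nirenberg theorem on the resulting local problem $\e\lambda_\e\Delta W=W^{p+1}$ in $B_R$, $W=b$ on $\partial B_R$; you instead note that any rotation $T\in O(n)$ carries the unique solution of the full nonlocal problem \eqref{4} to another solution and appeal to the uniqueness already proved in Theorem~\ref{th1.1}. Your argument is more elementary and self-contained, since it avoids the moving-plane machinery and uses only what has already been established in the paper; the paper's route, on the other hand, would yield symmetry (and in fact monotonicity) even without having first proved uniqueness for the nonlocal problem, so each approach has its own minimal set of prerequisites.
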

\begin{proof}
The existence and uniqueness of smooth solutions come from Theorem \ref{th1.1} directly. The fact that the unique solution is radially symmetric is a consequence of Gidas-Ni-Nirenberg theorem \cite{Gidas-Ni-Nirenberg} applied to the following problem
\begin{eqnarray}\label{lambda-equation}
\left\{
\begin{array}{lll}
 \e \lambda_\e\Delta W= W^{\chi+1},& x\in  B_R(0),\\[1mm]
W=b>0,&x\in \partial B_R(0),
\end{array}
\right.
\end{eqnarray}
where $\s_\e=\frac{1}{m}\int_{B_R} W^\chi dx>0$ is a constant. Note that \eqref{lambda-equation} is the analogue of \eqref{4n2} for $\Omega=B_R(0)$. By Theorem \ref{le3.thick} we see that $\s_\e\sim\e$. Next we prove the monotonicity of $(U,W)(r)$. Indeed, the steady-state problem \eqref{2} in the ball $B_R(0)$ can be written as
\begin{eqnarray}\label{eqn2.9}
\left\{
\begin{array}{lll}
U_r=\chi U\frac{W_r}{W},& r\in (0,R),\\[1mm]
\varepsilon W_{rr}+\frac{\varepsilon(n-1)}{r}W_r=UW,& r\in (0,R),\\[1mm]
U_r(0)=W_r(0)=0,\ W(R)=b,\\[1mm]
\omega_n\int_0^Rr^{n-1}U(r)dr=m.
\end{array}
\right.
\end{eqnarray}
Write the second equation of \eqref{eqn2.9} as
\begin{equation}\label{2.13}
\varepsilon(r^{n-1}W_r)_r=r^{n-1}UW.
\end{equation}
Noting $W_r(0)=0$, we integrate \eqref{2.13} over $(0,r)$ and get
\begin{equation}\label{2.16}
W_r(r)>0, \ \forall r\in(0,R].\end{equation}
Using the first equation of \eqref{eqn2.9}, we further get $U_r(r)>0$ for any $r\in(0,R]$. Hence $U(r)$ is monotonically increasing on $[0,R]$.
\end{proof}

In this section, we shall study the boundary expansion for the problem \eqref{2} in the ball. To start with our discussion, we shall first analyze \eqref{lambda-equation}. In the sequel, we set  $\te=\e\lambda_\e$ for simplicity, we can rewrite \eqref{lambda-equation} as
\begin{equation}
\label{5.r-1}
\begin{cases}
\te\left({W}''+\frac{n-1}{r}{W}'\right)-{W}^{1+p}=0,\quad\mathrm{for}\quad r\in(0,R],\\
{W}(R)=b,\quad {W}'(0)=0.
\end{cases}
\end{equation}
We remind the reader that $\te\sim\e^2$ by Theorem \ref{le3.thick}.
Our aim is to derive sharper upper and lower bounds of the solution to \eqref{5.r-1} than in the previous section.

\begin{lemma}
\label{le5.1}
Let ${W}_\e$ be a solution of \eqref{5.r-1}. Then we have $W_{\e,l}(r) \leq {W}_\e(r) \leq W_{\e,u}(r)$ for all $r\in [0,R]$ with
\begin{equation}
\label{5.2}
W_{\e,l}(r):=
b\left(1+\frac{b^{\frac{p}2}(R-r)}{c_p\te^\frac12}\right)^{-\frac{2}{p}}
\!\!,\,
W_{\e,u}(r):=
\begin{cases}
b\left(\frac{R}{r}\right)^{\frac{n-1}{2}}\left(1+\frac{b^{\frac{p}2}(R-r)}{c_{p}\te^\frac12}\right)^{-\frac{2}{p}},~&\mbox{if}~n\geq3,\\
b\left(\frac{R}{r}\right)^{a_{p}}\left(1+\frac{b^{\frac{p}2}(R-r)}{c_{p,1}\te^\frac12}\right)^{-\frac{2}{p}},~&\mbox{if}~n=2,\\
b\left(1+\frac{b^{\frac{p}2}(R-r)}{c_{p}\te^\frac12}\right)^{-\frac{2}{p}}+\frac{c_p^\frac{2}{p}\te^\frac{1}{p}}{R^\frac{2}{p}},~&\mbox{if}~n=1,
\end{cases}
,
\end{equation}
where $c_{p,1}=c_p(1-\frac{a_p^2 \te}{b^pR^2})^{-1/2}$ and $a_p=\max\{\frac12,\frac{2}{p}\}.$
\end{lemma}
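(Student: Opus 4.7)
The proof uses the barrier (sub/super-solution) method. The key algebraic observation is that $c_p=\sqrt{2(p+2)/p^2}$ is precisely the constant making the half-line profile $V(s)=b\bigl(1+b^{p/2}s/(c_p\te^{1/2})\bigr)^{-2/p}$ an exact solution of $\te V''=V^{p+1}$ on $[0,\infty)$ with $V(0)=b$. All barriers are built from $V$ (with possibly a modified width constant), multiplied in the multidimensional cases by a radial correction factor $(R/r)^a$ chosen to absorb the convection-like term $\frac{n-1}{r}W'$.

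\textbf{Lower bound.} Take $W_{\e,l}(r)=V(R-r)$. Since $\te V''=V^{p+1}$ holds exactly, so does $\te W_{\e,l}''=W_{\e,l}^{p+1}$; as $W_{\e,l}$ is increasing in $r$, the term $\te\frac{n-1}{r}W_{\e,l}'\geq0$ shows that $W_{\e,l}$ is a sub-solution in every dimension $n\geq1$. The comparison $W_{\e,l}\leq W_\e$ follows from the strong maximum principle applied to $\te\Delta(W_{\e,l}-W_\e)\geq W_{\e,l}^{p+1}-W_\e^{p+1}$: a positive interior maximum violates the sign, the boundary value at $r=R$ is zero, and a maximum at $r=0$ is excluded by $(W_{\e,l}-W_\e)'(0^+)=V'(R)>0$.

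\textbf{Upper bound.} Write $W_{\e,u}=bfg$ with $f(r)=(R/r)^a$ and $g$ a rescaled profile. Separating by $g,g',g''$ coefficients produces
\[
W_{\e,u}''+\tfrac{n-1}{r}W_{\e,u}' \;=\; \tfrac{a(a+2-n)}{r^2}bfg \;+\; \tfrac{n-1-2a}{r}bfg' \;+\; bfg''.
\]
For $n\geq3$ pick $a=(n-1)/2$: the middle term vanishes, the first coefficient $(n-1)(3-n)/4\leq0$, and keeping the original $c_p$ gives $\te g''=b^pg^{p+1}$, so with $f\geq1$ on $(0,R]$ the difference $\te\Delta W_{\e,u}-W_{\e,u}^{p+1}$ is manifestly non-positive. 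For $n=2$ the first coefficient equals $a^2>0$; we compensate by taking $a=a_p=\max\{1/2,2/p\}$ (so $1-2a_p\leq0$ kills the middle term) and replacing $c_p$ by $c_{p,1}=c_p\bigl(1-a_p^2\te/(b^pR^2)\bigr)^{-1/2}$, which gives $\te g''=b^p\bigl(1-a_p^2\te/(b^pR^2)\bigr)g^{p+1}$ and thus an extra negative term $-\tfrac{a_p^2\te}{R^2}bfg^{p+1}$. The growth $f^p=(R/r)^{a_pp}\geq(R/r)^2$ ensured by $a_p\geq2/p$ then absorbs the remaining positive $1/r^2$ term for small $r$, while $a_p\geq1/2$ closes the inequality at $r=R$. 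For $n=1$ the curvature term is absent, but $V(R-r)$ alone violates $W'(0)=0$; we therefore build a symmetric super-solution $W_{\e,u}^{\mathrm{sym}}(r)=V(R-r)+V(R+r)$ and use the convexity inequality $(A+B)^{p+1}\geq A^{p+1}+B^{p+1}$ to obtain $\te(W_{\e,u}^{\mathrm{sym}})''\leq(W_{\e,u}^{\mathrm{sym}})^{p+1}$, with $W_{\e,u}^{\mathrm{sym}}(\pm R)\geq b$. The elementary estimate $V(R+r)\leq V(R)\leq c_p^{2/p}\te^{1/p}/R^{2/p}$ shows $W_{\e,u}^{\mathrm{sym}}\leq W_{\e,u}$, so the stated $W_{\e,u}$ also dominates $W_\e$.

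\textbf{Comparison and main obstacle.} Set $\Psi=W_\e-W_{\e,u}$; interior positive maxima are excluded by the strong maximum principle applied to $\te\Delta\Psi\geq W_\e^{p+1}-W_{\e,u}^{p+1}$, while $\Psi(R)\leq0$. The origin is handled either by the blow-up $W_{\e,u}(r)\to\infty$ as $r\to0$ (for $n\geq2$), or by Hopf's lemma combined with $(W_{\e,u}^{\mathrm{sym}})'(0)=0$ (for $n=1$). The principal technical difficulty is the $n=2$ case: the naturally produced curvature contribution $a^2bfg/r^2$ is strictly positive, and one must simultaneously invoke three ingredients — the middle-term sign via $a_p\geq1/2$, the layer widening $c_{p,1}>c_p$, and the growth $(R/r)^{a_pp}$ via $a_p\geq2/p$ — and then verify the resulting algebraic inequality uniformly on $(0,R]$ rather than only at the endpoints $r=R$ and $r\to0^+$. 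This multi-ingredient balance is exactly what forces the definition $a_p=\max\{1/2,2/p\}$.
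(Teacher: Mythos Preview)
Your approach is essentially the paper's: the same barrier functions, the same exponent choices $a=(n-1)/2$ for $n\geq3$ and $a_p=\max\{1/2,2/p\}$ with widened constant $c_{p,1}$ for $n=2$, the symmetric super-solution for $n=1$, and the same comparison arguments (interior maxima excluded by the PDE sign, $r=R$ by boundary data, $r=0$ by blow-up or the derivative).

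Two small points. First, in the lower-bound step your sign is off: $V$ is decreasing, so $V'(R)<0$; the correct observation is $W_{\e,l}'(0)=-V'(R)>0$, which is exactly what the paper uses to rule out a maximum at the origin. Second, for $n=2$ you correctly isolate all three ingredients and honestly flag the uniform verification on $(0,R]$ as the obstacle, but you do not carry it out. The paper does: it expands $(R/r)^{a_pp}=\bigl(1+(R-r)/r\bigr)^{a_pp}$ to second order, proves the coefficient inequality $a_p^2/c_{p,1}^2-\tfrac12 pa_p(pa_p-1)<-\tfrac{p}{p+2}$, and then absorbs the residual $\te$- and $\te^{1/2}$-terms by choosing an explicit threshold $\te_0$. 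In particular the super-solution property in dimension two holds only for $\te$ below this threshold, a restriction your sketch does not mention.
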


\begin{proof}
We denote the left hand-side and right hand-side functions of \eqref{5.2} by ${W}_{\e,1}$ and ${W}_{\e,2}$. First we show that ${W}_{\e,1}\leq {W}_{\e}$. By direct computation we have
\begin{equation}
\label{5.r-2}
\te\left({W}_{\e,1}''+\frac{n-1}{r}{W}'_{\e,1}\right)-{W}_{\e,1}^{1+p}=
\frac{2(n-1)\te^\frac12}{p c_pr}b^{1+\frac{p}2}\left(1+\frac{b^{\frac{p}2}(R-r)}{c_p\te^\frac12}\right)^{-\frac{2+p}{p}}
\geq0.
\end{equation}
Now we claim ${W}_{\e,1}$ is a sub-solution of \eqref{5.r-1}. Indeed, ${W}_{\e,1}(R)=b={W}_{\e}(R)$ and ${W}_{\e,1}-{W}_{\e}$ can not possess an interior local positive maximal value due to maximum principle and \eqref{5.r-2}. At the zero point, we have
$${W}'_{\e,1}(0)-{W}'_{\e}(0)={W}'_{\e,1}(0)>0$$
which entails that $0$ can not be a local maximal point. Thus, we obtain the left hand-side inequality of \eqref{5.2}.

While for the upper bound, we shall divide our discussion into three cases.
\smallskip

\noindent Case 1: $n\geq3$, by direct computation we have
\begin{equation*}
\label{5.r-3}
\begin{aligned}
&\te\left({W}_{\e,2}''+\frac{n-1}{r}{W}_{\e,2}'\right)-{W}_{\e,2}^{1+p}\\
&=\frac{\te(n-1)(3-n)}{4}bR^{\frac{n-1}{2}}r^{-\frac{n+3}{2}}\left(1+\frac{b^{\frac{p}2}(R-r)}{c_p\te^\frac12}\right)^{-\frac{2}{p}}\\&\quad
+b^{1+p}\left(1-\left(\frac{R}{r}\right)^\frac{(n-1)p}{2}\right)\left(\frac{R}{r}\right)^\frac{(n-1)}{2}
\left(1+\frac{b^{\frac{p}2}(R-r)}{c_p\te^\frac12}\right)^{-\frac{2+2p}{p}}\\
&\leq0.
\end{aligned}
\end{equation*}
Together with that ${W}_{\e,2}(R)={W}_{\e}(R)=b$ and ${W}_{\e,2}(r)\to+\infty$ as $r\to0$, we see that ${W}_{\e,2}$ provides a super-solution.
\smallskip

\noindent Case 2: $n=2$, following the computations as we did in \eqref{3.exp} and using $a_{p}=\max\{\frac12,\frac{2}{p}\}$ we have
\begin{equation*}
\begin{aligned}
&\te\left({W}_{\e,2}''+\frac{n-1}{r}{W}_{\e,2}'\right)-{W}_{\e,2}^{1+p}\\
&\leq b\left(\frac{\te a_p^2}{r^2}\left(1+\frac{b^{\frac{p}2}(R-r)}{c_{p,1}\te^\frac12}\right)^2
+\frac{c_p^2b^p}{c_{p,1}^2}-b^p\left(\frac{R}{r}\right)^{p a_p }\right)
\left(\frac{R}{r}\right)^{a_p}\left(1+\frac{b^{\frac{p}2}(R-r)}{c_{p,1}\te^\frac12}\right)^{-\frac{2+2p}{p}}.
\end{aligned}
\end{equation*}
Denoting
\begin{equation*}
F_\e(r,R):=\frac{\te a_p^2}{r^2}\left(1+\frac{b^{\frac{p}2}(R-r)}{c_{p,1}\te^\frac12}\right)^2
+\frac{c_p^2b^p}{c_{p,1}^2}-b^p\left(\frac{R}{r}\right)^{p a_p }.
\end{equation*}
Using the definition of $c_{p,1}$ we can rewrite $F_{\e}(r,R)$ as
\begin{equation*}
F_{\e}(r,R)=b^{p}\left(\frac{a_p^2(R-r)^2}{c_{p,1}^2r^2}+1-\left(\frac{R}{r}\right)^{p a_p }\right)+\frac{\te a_p^2}{r^2}-\frac{\te a_p^2}{R^2}+\frac{2\te^\frac12 a_p^2b^{\frac{p}{2}}(R-r)}{c_{p,1}r^2}.
\end{equation*}
We claim that $F_{\e}(r,R)$ is strictly negative for sufficiently small $\e$ ($\e$ small implies that $\te$ small). Using the fact $p a_p>2$ and the well-known inequality
$$(1+x)^p\geq 1+px+\frac12p(p-1)x^2~\ \mbox{for}~\ x\geq0,\quad \mbox{if}\quad p>2.$$
Then
\begin{equation*}
\begin{aligned}
F_{\e}(r,R)\leq~&b^{p}\left(\frac{a_p^2(R-r)^2}{c_{p,1}^2r^2}+1-\frac{r^2+p a_p(R-r)r
	+\frac12p a_p(p a_p-1)(R-r)^2}{r^2}\right)\\
&+\frac{\te a_p^2}{r^2}-\frac{\te a_p^2}{R^2}+\frac{2\te^\frac12 a_p^2b^{\frac{p}{2}}(R-r)}{c_{p,1}r^2}\\
=~&\frac{b^p(R-r)^2}{r^2}\left(\frac{a_p^2}{c^2_{p,1}}-\frac12p a_p(p a_p-1)\right)
-p a_p b^p\frac{R-r}{r}+\frac{2\te^\frac12 a_p^2b^{\frac{p}{2}}(R-r)}{c_{p,1}r^2}\\
&+\frac{\te a_p^2}{r^2}-\frac{\te a_p^2}{R^2}.
\end{aligned}
\end{equation*}
Consider the coefficient $\frac{a_p^2}{c^2_{p,1}}-p a_p(p a_p-1)$, one can easily check that
\begin{equation*}
\begin{aligned}
\frac{a_p^2}{c^2_{p,1}}-\frac12p a_p(p a_p-1)
=~&\frac{a_p^2}{c_p^2}-\frac1{c_p^2}\frac{a_p^4\e^2}{b^p R^2}-\frac12p a_p(p a_p-1)\\
<~&\frac{a_p^2}{c_p^2}-\frac12p a_p(p a_p-1)=a_p\left(\frac12p-\frac{p^2+p^3}{4+2p}a_p\right)\\
<~&a_p\left(\frac12p-\frac{p^2+p^3}{4+2p}\frac{2}{p}\right)=-\frac{p^2}{4+2p}a_p<-\frac{p}{2+p}.
\end{aligned}
\end{equation*}
As a consequence, we have
\begin{equation*}
F_{\e}(r,R)\leq-\frac{p}{2+p}\frac{b^p(R-r)^2}{r^2}-p a_p b^p\frac{R-r}{r}+\frac{2\te^\frac12 a_p^2b^{\frac{p}{2}}(R-r)}{c_{p,1}r^2}+\frac{\te a_p^2}{r^2}-\frac{\te a_p^2}{R^2}.
\end{equation*}
Then we choose $\te_0$ as
$$\te_0=\left(\min\left\{\dfrac{p a_p b^p}{\frac{8a_p}{R^2}+\frac{8a_p^2b^{\frac{p}{2}}}{c_{p,1} R}},~
\frac{p}{2+p}\frac{b^p R^2 c_{p,1}}{4a_p^2c_{p,1}+8a_p^2b^{\frac{p}{2}}R},~1\right\}\right)^2.$$
One can easily check that if $\te\in(0,\te_0)$ then $F_{\e}(r,R)\leq0$ for $r\in(0,R]$. Thus, we see that ${W}_{\e,2}$ provides a super-solution of \eqref{5.r-1}.
\smallskip

\noindent Case 3: $n=1$. In this case, we consider the original problem for $x\in [-R,R]$ and set
$${W}_{\e,b}:=b\left(1+\frac{b^{\frac{p}2}(R-x)}{c_{p}\te^\frac12}\right)^{-\frac{2}{p}}
+b\left(1+\frac{b^{\frac{p}2}(R+x)}{c_{p}\te^\frac12}\right)^{-\frac{2}{p}}.$$
As we have shown that in last section this provides a super-solution. Together with the trivial fact that ${W}_{\e,b}\leq {W}_{\e,2}$ for $x\in(0,R)$, we finish the proof for the right hand side of \eqref{5.2} in this case.
\end{proof}

In the following lemma, we shall derive a useful expansion of the normal derivative for the solution ${W}_{\e}$ of \eqref{5.r-1}.

\begin{lemma}\label{le5.2}
Let ${W}_{\e}$ be a solution of \eqref{5.r-1}. Then on the boundary $\partial B_R(0)$ we have for $0<\te\ll1$
\begin{equation*}
{W}_{\e}'(R) =\sqrt{\frac{2}{p+2}}b^{1+\frac{p}2}\te^{-\frac12}-\frac{2(n-1)b}{(p+4)R}+o_{\te}(1).
\end{equation*}
\end{lemma}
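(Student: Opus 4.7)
The plan is to multiply the ODE \eqref{5.r-1} by $W_\e'$ and integrate, obtaining an energy identity whose boundary-layer integral I then evaluate via a rescaling around $r=R$. Multiplying and integrating over $[0,R]$, the first term becomes a total derivative and the right-hand side integrates against $W_\e^{p+1}$; using $W_\e'(0)=0$ and $W_\e(R)=b$ gives
\begin{equation*}
\frac{\te}{2}W_\e'(R)^2 + (n-1)\te\int_0^R \frac{(W_\e'(r))^2}{r}\,dr = \frac{b^{p+2}-W_\e(0)^{p+2}}{p+2}.
\end{equation*}
By Lemma \ref{le4.1} (applied with $\te$ in place of $\e$), $W_\e(0)\leq C\te^{1/p}$, so $W_\e(0)^{p+2}=O(\te^{1+2/p})=o(\te^{1/2})$; this term does not enter the two-term expansion.

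The heart of the proof is the asymptotics of the integral. I would introduce the boundary-layer rescaling $r=R-\te^{1/2}y$, $\varphi_\e(y)=W_\e(R-\te^{1/2}y)$, under which \eqref{5.r-1} becomes
\begin{equation*}
\varphi_\e'' - \frac{(n-1)\te^{1/2}}{R-\te^{1/2}y}\,\varphi_\e' = \varphi_\e^{p+1},\qquad \varphi_\e(0)=b.
\end{equation*}
The uniform bounds of Lemma \ref{le5.1} together with standard ODE compactness yield $\varphi_\e\to\varphi_0$ in $C^1_{\mathrm{loc}}([0,\infty))$, where $\varphi_0(y)=b(1+b^{p/2}y/c_p)^{-2/p}$ is the explicit solution of $\varphi_0''=\varphi_0^{p+1}$ with $\varphi_0(0)=b$, $\varphi_0(\infty)=0$, obeying the first-integral identity $(\varphi_0')^2=\tfrac{2}{p+2}\varphi_0^{p+2}$. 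After the change of variable,
\begin{equation*}
\te\int_0^R \frac{(W_\e'(r))^2}{r}\,dr = \te^{1/2}\int_0^{R/\te^{1/2}}\frac{(\varphi_\e'(y))^2}{R-\te^{1/2}y}\,dy \;\longrightarrow\; \frac{\te^{1/2}}{R}\int_0^\infty (\varphi_0'(y))^2\,dy,
\end{equation*}
and a direct computation gives $\int_0^\infty (\varphi_0')^2\,dy=\tfrac{4b^{2+p/2}}{(p+4)\sqrt{2(p+2)}}$.

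Substituting back into the energy identity produces
\begin{equation*}
W_\e'(R)^2 = \frac{2b^{p+2}}{(p+2)\te}-\frac{8(n-1)b^{2+p/2}}{R(p+4)\sqrt{2(p+2)}}\,\te^{-1/2}+o(\te^{-1/2}),
\end{equation*}
and taking the square root via $\sqrt{1-x}\approx 1-\tfrac{x}{2}$ delivers the claimed expansion. The main obstacle is justifying the passage to the limit in the rescaled integral uniformly in $\te$. I plan to handle this by splitting the integration at a large fixed $M$: on $[0,M]$ the $C^1_{\mathrm{loc}}$ convergence suffices; on $[M,R/\te^{1/2}]$ I would combine Lemma \ref{le4.1} with the first integral $\te\,r^{n-1}W_\e'(r)=\int_0^r s^{n-1}W_\e^{p+1}(s)\,ds$ to obtain $|W_\e'|=O(\te^{1/p})$ away from the boundary, making the tail contribution to $\int(\varphi_\e')^2\,dy$ negligible as $\te\to0$.
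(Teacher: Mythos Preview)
Your approach is correct and genuinely different from the paper's. Both start from the same energy identity
\[
\frac{\te}{2}W_\e'(R)^2 + (n-1)\te\int_0^R \frac{(W_\e')^2}{r}\,dr = \frac{b^{p+2}-W_\e(0)^{p+2}}{p+2},
\]
but you evaluate the integral by a boundary-layer rescaling $r=R-\te^{1/2}y$, using the barrier bounds of Lemma~\ref{le5.1} to extract $C^1_{\mathrm{loc}}$ compactness of $\varphi_\e$ and identify the limit with the explicit profile $\varphi_0$. The paper instead bootstraps: it uses the identity \eqref{d.1} at a \emph{variable} point $r$ to write $W_\e'(r)=\te^{-1/2}\sqrt{2/(p+2)}\,W_\e^{(p+2)/2}(r)(1+\text{errors})$, then substitutes this approximation back into $\int \frac{n-1}{r}(W_\e')^2\,dr$ and integrates explicitly, obtaining matching upper and lower bounds with a quantitative remainder $O(\te^{1/2+\alpha})$. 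Your compactness route is conceptually cleaner and avoids the somewhat delicate choice of the cutoff scale $R-C\te^{\frac{p+4}{4(p+2)}+\gamma}$ that the paper uses; the paper's route, in exchange, yields an explicit rate rather than merely $o_\te(1)$.

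One point to tighten in your tail argument: the quantity you must control on $[M,R/\te^{1/2}]$ is $\int \frac{(\varphi_\e')^2}{R-\te^{1/2}y}\,dy$, not just $\int(\varphi_\e')^2\,dy$, and the weight $(R-\te^{1/2}y)^{-1}$ blows up near $y=R/\te^{1/2}$ (i.e.\ near $r=0$). The bound $|W_\e'|=O(\te^{1/p})$ alone is not enough there. However, the first-integral formula you already cite gives $\te\,W_\e'(r)/r\le \tfrac{1}{n}W_\e^{p+1}(r)$, whence
\[
\te\int_0^{R/2}\frac{(W_\e')^2}{r}\,dr \le \frac{1}{n}\int_0^{R/2}W_\e^{p+1}W_\e'\,dr = \frac{W_\e^{p+2}(R/2)}{n(p+2)} = O(\te^{1+2/p}),
\]
which disposes of the inner region; the remaining range $M\le y\le R/(2\te^{1/2})$ has denominator bounded below by $R/2$ and is handled by the uniform decay $(\varphi_\e')^2\le C(1+y)^{-2(p+2)/p}$ coming from \eqref{d.1} and Lemma~\ref{le5.1}. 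With this clarification your argument is complete.
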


\begin{proof}
First multiplying both sides of \eqref{5.r-1} by ${W}_{\e}'$, and integrating the result from $0$ to $r$, we get
\begin{equation}
\label{d.1}
\frac12{W}_{\e}'(r)^2=\frac{1}{(p+2)\te}\left({W}_{\e}^{p+2}(r)-{W}_{\e}^{p+2}(0)\right)-\int_0^r\frac{n-1}{s}{W}_{\e}'(s)^2ds.	
\end{equation}
Using \eqref{5.r-1}, we have
\begin{equation}\label{est-int-1aux}
\te r^{n-1}{W}_{\e}'(r)=\int_0^rs^{n-1}{W}_{\e}^{p+1}(s)ds.
\end{equation}
So by the increasing property of ${W}_{\e}$, one has
\begin{equation}
\label{est-int-1}
\te\frac{{W}_{\e}'(r)}{r}=\frac{1}{r^n}\int_0^rs^{n-1}{W}_{\e}^{p+1}(s)ds\leq \frac1n{W}_{\e}^{p+1}(r).
\end{equation}
Then for $r\in\left(\frac{R}{2},R\right)$, it follows that
\begin{equation}
\label{est-int-2}
\begin{aligned}
\te\int_0^r\frac{n-1}{s}{W}_{\e}'(s)^2ds=~&\te\int_0^\frac{R}{4}\frac{n-1}{s}{W}_{\e}'(s)^2ds+\te\int^r_\frac{R}{4}\frac{n-1}{s}{W}_{\e}'(s)^2ds\\
\leq ~&\frac{1}{p+2}\frac{n-1}{n}{W}_{\e}^{p+2}\left(\frac{R}4\right)+\frac{4(n-1)}{R}\int_{\frac{R}{4}}^r\left(\int_0^s{W}_{\e}^{p+1}(\tau)d\tau\right){W}_{\e}'(s)ds,
\end{aligned}
\end{equation}
where \eqref{est-int-1} and \eqref{est-int-1aux} have been used in the first and second term respectively.
For the second term on the right hand side of \eqref{est-int-2}, using Lemma \ref{le4.1} and Lemma \ref{le5.1}, we have
\begin{equation}
\label{est-int-3}
\begin{aligned}
\int_0^s{W}_{\e}^{p+1}(\tau)d\tau\leq~&
\begin{cases}
C\te^{1+\frac1p},\quad &\mathrm{for}~s\in\left(0,\frac{R}{4}\right),\\
C\te^\frac12\left(1+\frac{R-s}{C\te^\frac12}\right)^{-1-\frac{2}{p}}, &\mathrm{for}~s\in\left(\frac{R}{4},R\right),
\end{cases}
\end{aligned}
\end{equation}
where $C$ is a generic positive constant. Particularly, there holds that
\begin{equation}
\label{est-int-4}
\begin{aligned}
\int_0^R{W}_{\e}^{p+1}dr\leq C\te^\frac12.
\end{aligned}
\end{equation}
As a consequence, we have from \eqref{est-int-2} and \eqref{3.b-1} that
\begin{equation}
\label{est-int-5}
\te \int_0^R\frac{n-1}{r}{W}_{\e}'(r)^2dr\leq C\te^{1+\frac{2}{p}}+C\te^\frac12\int_{\frac{R}{4}}^R{W}_{\e}'(r)dr\leq C\te^\frac12.
\end{equation}
Returning  to equation \eqref{d.1}, we can see that $\frac{1}{\te}{W}_{\e}^{p+2}(r)$ is the predominant term on the right hand side for $R-r\leq C\te^{\frac{p+4}{4(p+2)}+\gamma}$ for some sufficiently small positive number $\gamma$. Indeed, with Lemma \ref{le5.1}, we have
$$\frac{1}{\te}{W}_{\e}^{p+2}\geq \frac{1}{\te}b^{p+2}\left(1+\frac{b^{\frac{p}{2}}}{c_p}\te^{-\frac{p}{4(p+2)}+\gamma}\right)^{-\frac{2(p+2)}{p}}\gg C\te^{-\frac12}.$$
Hence for $r\in \Big(0,R-C\te^{\frac{p+4}{4(p+2)}+\gamma}\Big)$, we can apply the Taylor's expansion to rewrite \eqref{d.1} as
\begin{equation*}
\begin{aligned}
{W}_{\e}'(r)=\te^{-\frac12}\sqrt{\frac{2}{p+2}}{W}_{\e}^{\frac{p+2}{2}}(r)\left(1+O\left(\frac{{W}_{\e}^{p+2}(0)}{{W}_{\e}^{p+2}(r)}\right)
+O\left(\frac{\te\int_0^r\frac{n-1}{s}{W}_{\e}'(s)^2ds}{{W}_{\e}^{p+2}(r)}\right)\right)
\end{aligned}
\end{equation*}
where the leading coefficient in the second and third terms are negative. With this, we resort \eqref{est-int-5} to derive that
\begin{equation}
\label{grad-est-0}
\begin{aligned}
\te\int_0^R\frac{n-1}{r}{W}_{\e}'(r)^2dr\geq~&\te\int_{R-C\te^{\frac{p+4}{4(p+2)}+\gamma}}^R\frac{n-1}{r}{W}_{\e}'(r)^2dr\\
\geq~& \frac{(n-1)\te^\frac12}{R}\sqrt{\frac{2}{p+2}}\int_{R-C\te^{\frac{p+4}{4(p+2)}+\gamma}}^R{W}_{\e}^{\frac{p+2}{2}(r)}{W}_{\e}'(r)dr\\
&+O(\te^\frac12)\int_{R-C\te^{\frac{p+4}{4(p+2)}+\gamma}}^R\frac{{W}_{\e}^{p+2}(0)}{{W}_{\e}^{\frac{p+2}{2}}(r)}{W}_{\e}'(r)dr\\
&+O(\te^\frac12)\int_{R-C\te^{\frac{p+4}{4(p+2)}+\gamma}}^R\frac{\te\int_0^r\frac{n-1}{s}{W}_{\e}'(s)^2ds}{{W}_{\e}^{\frac{p+2}{2}}(r)}{W}_{\e}'(r)dr\\
=~&\frac{(n-1)\te^\frac12}{R}\sqrt{\frac{2}{p+2}}\frac{2}{p+4}b^{2+\frac{p}{2}}+O\big(\te^{\frac12+\frac{p+4}{4p+8}-\frac{p+4}{p}\gamma}\big),
\end{aligned}
\end{equation}
While on the other hand, using \eqref{d.1}, we have
\begin{equation}
\label{grad-est-1}
\begin{aligned}
\te\int_0^R\frac{n-1}{r}{W}_{\e}'(r)^2dr=~&\te\int_{R-C\e^{\frac{p+4}{4(p+2)}+\gamma}}^R\frac{n-1}{r}{W}_{\e}'(r)^2dr\\
&+\te\int_0^{R-C\e^{\frac{p+4}{4(p+2)}+\gamma}}\frac{n-1}{r}{W}_{\e}'(r)^2dr\\
\leq~&\frac{(n-1)\te^\frac12}{R-C\te^{\frac{p+4}{4(p+2)}+\gamma}}\sqrt{\frac{2}{p+2}}\frac{2}{p+4}b^{2+\frac{p}{2}}\\
&+\te \int_0^{R-C\te^{\frac{p+4}{4(p+2)}+\gamma}}\frac{n-1}{r}{W}_{\e}'(r)^2dr.
\end{aligned}
\end{equation}
For the second term on the right hand side of \eqref{grad-est-2}, using \eqref{est-int-2}-\eqref{est-int-3}, we see that
\begin{equation}
\label{grad-est-2}
\begin{aligned}
\te\int_0^{R-C\te^{\frac{p+4}{4(p+2)}+\gamma}}\frac{n-1}{r}{W}_{\e}'(r)^2dr
\leq~&\frac{4C(n-1)\te^\frac12}{R}\int_{\frac{R}{4}}^{R-C\te^{\frac{p+4}{4(p+2)}+\gamma}}\frac{{W}_{\e}'(r)}{\left(1+\frac{R-r}{C\te^\frac12}\right)^{1+\frac{2}{p}}}dr\\
& +\frac{1}{p+2}\frac{n-1}{n}{W}_{\e}^{p+2}({R}/{4})\\
\leq~&C\te^{\frac12+\frac{p+4}{4p+8}-\frac{p+4}{p}\gamma},
\end{aligned}
\end{equation}
where the lower bound of $W_\e(r)$ given in Lemma \ref{le5.1} has been used.
From \eqref{grad-est-1} and \eqref{grad-est-2} we get
\begin{equation*}
\label{grad-est-3}
\te\int_0^R\frac{n-1}{r}{W}_{\e}'(r)^2dr\leq \frac{(n-1)\te^\frac12}{R}\sqrt{\frac{2}{p+2}}\frac{2}{p+4}b^{2+\frac{p}{2}}+O\big(\te^{\frac12+\frac{p+4}{4p+8}-\frac{p+4}{p}\gamma}\big).
\end{equation*}
Combined with \eqref{grad-est-0}, we finally arrive at
\begin{equation*}
\label{grad-est-4}
\te\int_0^R\frac{n-1}{r}{W}_{\e}'(r)^2dr= \frac{(n-1)\te^\frac12}{R}\sqrt{\frac{2}{p+2}}\frac{2}{p+4}b^{2+\frac{p}{2}}+O\big(\te^{\frac12+\frac{p+4}{4p+8}-\frac{p+4}{p}\gamma}\big).
\end{equation*}
Evaluating \eqref{d.1} at $R$ and using Lemma \ref{le4.1} yields
\begin{equation}
\label{normal-est}
({W}_{\e}'(R))^2=\frac{1}{\te}\frac{2}{p+2}b^{p+2}-\te^{-\frac12}\frac{n-1}{ R}\sqrt{\frac{2}{p+2}}\frac{4}{p+4}b^{2+\frac{p}{2}}+o_{\te}\left(1\right)\te^{-\frac12}.
\end{equation}
This implies the desired conclusion.
\end{proof}

Next we shall use Lemma \ref{le5.1} and the conclusions obtained in the last section to derive a more accurate estimate on the integration $\int_{B_R(0)}{W_\e}^{p}dx$ of the original nonlocal problem.

\begin{lemma}
\label{le5.3}
Let ${W}_\e$ be a solution of the following nonlocal problem
\begin{equation}
\label{5.nonlocal-p}
\begin{cases}
\e\Delta {W}=\frac{{m}}{\int_\Omega {W}^p dx}W^{1+p}\quad &\mbox{in}\quad B_R(0),\\
{W}=b &\mbox{on}\quad \partial B_R(0),
\end{cases}
\end{equation}
and $\s_\e=\frac{\int_\Omega {W}_\e^p dx}{m}$. By $\omega_n$ we denote the surface area of the unit sphere in $\mathbb{R}^n$, then we have
$$\s_\e=\frac{\omega_n^2b^{p}c_p^2R^{2n-2}}{m^2}\e+O(\e^2\log\e).$$
\end{lemma}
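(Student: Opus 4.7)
\medskip
\noindent\textbf{Proof proposal for Lemma \ref{le5.3}.}
The plan is to compute $\int_{B_R} W_\e^p\,dx$ to leading order plus an $O(\e^2|\log\e|)$ remainder and then invert the defining relation $m\sigma_\e = \int_{B_R} W_\e^p\,dx$. Write $\tau := \e\sigma_\e$, so that $W_\e$ solves $\tau\Delta W_\e = W_\e^{p+1}$ in $B_R$ with $W_\e = b$ on $\partial B_R$; Lemma \ref{le3.thick} gives $\tau\sim \e^2$, hence the hypotheses of Lemma \ref{le5.1} are met for $\e$ small. The sharp sandwich $W_{\e,l}(r)\le W_\e(r)\le W_{\e,u}(r)$ from Lemma \ref{le5.1} is the main input; the point is that the leading contribution of both $\int W_{\e,l}^p$ and $\int W_{\e,u}^p$ agrees, with the discrepancy between them being absorbed in the claimed error.

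The computation is carried out by a two-region split. On the interior ball $B_{R/2}$, applying Lemma \ref{le4.1} to the problem $\tau\Delta W_\e = W_\e^{p+1}$ (with $\tau$ in the role of the small parameter) gives $\max_{B_{R/2}} W_\e\le C\tau^{1/p}$, so
\begin{equation*}
\omega_n\!\int_0^{R/2}\! r^{n-1}W_\e^p\,dr \le C\tau = O(\e^2).
\end{equation*}
On the boundary-layer annulus $r\in[R/2,R]$, introduce $y = b^{p/2}(R-r)/(c_p\tau^{1/2})$ and set $\alpha := c_p\tau^{1/2}/b^{p/2}\sim\e$, $y_1 := R/(2\alpha)\sim\e^{-1}$; then
\begin{equation*}
\omega_n\!\int_{R/2}^{R}\! r^{n-1}W_{\e,l}^p\,dr = \omega_n b^p\alpha\!\int_0^{y_1}(R-\alpha y)^{n-1}(1+y)^{-2}\,dy.
\end{equation*}
Using the binomial expansion of $(R-\alpha y)^{n-1}$ together with the elementary asymptotics $\int_0^{y_1}(1+y)^{-2}dy = 1+O(\alpha)$, $\int_0^{y_1}y(1+y)^{-2}dy = \log(1+y_1)+O(1)$ and $\int_0^{y_1}y^k(1+y)^{-2}dy = O(y_1^{k-1})$ for $k\ge 2$, the $k=0$ term yields the leading order $\omega_n R^{n-1}b^{p/2}c_p\tau^{1/2}$, the $k=1$ term contributes $O(\alpha^2|\log\alpha|) = O(\tau|\log\tau|) = O(\e^2|\log\e|)$, and the remaining terms collapse to $O(\alpha^2) = O(\e^2)$. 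The upper bound calculation is parallel: for $n\ge 3$ the extra factor $(R/r)^{(n-1)p/2}$ is expanded as $1+\frac{(n-1)p}{2R}(R-r)+O((R-r)^2)$ on $[R/2,R]$ and the additional integral $\int(R-r)(1+y)^{-2}dr$ again yields an $O(\alpha^2|\log\alpha|)$ correction; for $n=2$ the discrepancy $c_{p,1}-c_p = O(\tau)$ is inert at this order; for $n=1$ the additive term $c_p^{2/p}\tau^{1/p}/R^{2/p}$ in $W_{\e,u}$ contributes only $O(\tau)$ after using $(a+b)^p\le 2^{p-1}(a^p+b^p)$ in the bulk, where $W_{\e,l}\lesssim \alpha^{2/p}$. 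Consolidating the two regions,
\begin{equation}\label{plan-implicit}
m\sigma_\e = \omega_n b^{p/2}c_p R^{n-1}(\e\sigma_\e)^{1/2} + O(\e^2|\log\e|).
\end{equation}

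Setting $A := \omega_n b^{p/2}c_p R^{n-1}/m$, write $\sigma_\e = A^2\e + \eta$ and substitute into \eqref{plan-implicit}. Using the a priori bound $\sigma_\e = O(\e)$ from Lemma \ref{le3.thick}, expand $\sqrt{A^2\e+\eta\,\cdot\e^{-1}\cdot\e} = A\e^{1/2}\sqrt{1+\eta/(A^2\e)}$ to obtain $A^2\e+\eta = A^2\e + \eta/2 + O(\e^2|\log\e|)$, whence $\eta/2 = O(\e^2|\log\e|)$ and the claim $\sigma_\e = A^2\e + O(\e^2|\log\e|) = \frac{\omega_n^2 b^p c_p^2 R^{2n-2}}{m^2}\e + O(\e^2|\log\e|)$ follows. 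The principal difficulty is the boundary-layer integration: the three distinct forms of $W_{\e,u}$ in dimensions $n=1,2$, and $n\ge 3$ must each be Taylor-expanded and integrated precisely enough that the upper and lower sandwich estimates agree to order $\e^2|\log\e|$, and the logarithmic divergence from the $k=1$ term of the binomial expansion must be balanced against the curvature correction $(R/r)^{(n-1)/2}$, both of which conspire to produce the announced error.
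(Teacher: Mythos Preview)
Your proposal is correct and follows essentially the same route as the paper: sandwich $W_\e$ between $W_{\e,l}$ and $W_{\e,u}$ from Lemma~\ref{le5.1}, compute $\int_{B_R}W_\e^p$ by the change of variable $y=b^{p/2}(R-r)/(c_p\tau^{1/2})$, extract the common leading term $\omega_n b^{p/2}c_pR^{n-1}\tau^{1/2}$ with an $O(\tau|\log\tau|)$ remainder, and invert the implicit relation. The only cosmetic differences are that the paper integrates the lower bound $W_{\e,l}^p$ over all of $[0,R]$ rather than splitting at $R/2$, and performs the inversion by dividing through by $\sigma_\e^{1/2}$ instead of writing $\sigma_\e=A^2\e+\eta$ and Taylor-expanding (for the latter you should first note $\eta=o(\e)$, e.g.\ by the division argument, to justify the expansion).
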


\begin{proof}
In the following, we shall consider the case $n\ge2$, while the case $n=1$ can be treated similarly with simpler calculations. Using Lemma \ref{le5.1} for $n\ge2$ (with $\te$ replaced by $\e\lambda_\e$) and inequality \eqref{3.b-1} we have
\begin{equation*}
b\left(1+\frac{b^{\frac{p}2}(R-r)}{c_{p}\e^\frac12\s_\e^\frac12}\right)^{-\frac{2}{p}}
\leq {W}_\e(r)\leq \begin{cases}
b\left(\frac{R}{r}\right)^{\theta}\left(1+\frac{b^{\frac{p}2}(R-r)}{c_{p}\e^\frac12\s_\e^\frac12}\right)^{-\frac2p},~&\mbox{if}~r\geq \frac{R}{2},\\[5mm]
C_K\e^{\frac1p}\lambda_\e^{\frac1p},~&\mbox{if}~r\leq \frac{R}{2}
\end{cases}
\end{equation*}
where $\theta=\frac{n-1}{2}, \tilde{c}_p=c_p$ for $n\ge3$ and $\theta=a_p, \tilde{c}_p=c_{p,1}$ for $n=2$ with $c_{p,1}=c_p(1-\frac{a_p^2 \e \lambda_\e}{b^pR^2})^{-1/2}$ and $a_p=\max\{\frac{1}{2}, \frac{2}{p}\}$.
As a consequence, we deduce
\begin{equation}
\label{int-lower}
\begin{aligned}
\int_{B_R(0)}{W}_\e^p dx\geq~&\omega_n\int_0^Rr^{n-1}b^p\bigg(1+\frac{b^{\frac{p}2}(R-r)}{\tilde{c}_p\e^\frac12\s_\e^\frac12}\bigg)^{-2}dr\\
=~&\omega_nb^{\frac{p}2}\tilde{c}_p\e^\frac12\s_\e^\frac12\int_0^{\frac{b^{\frac{p}2}R}{\tilde{c}_p\e^\frac12\s_\e^\frac12}}(1+r)^{-2}
\bigg(R-\frac{\tilde{c}_p\e^\frac12\s_\e^\frac12}{b^{\frac{p}2}}r\bigg)^{(n-1)}dr\\
=~&\omega_nb^{\frac{p}2}\tilde{c}_p\e^\frac12\s_\e^\frac12 R^{n-1}-(n-1)\omega_n\tilde{c}_p^2\e \s_\e  \int_0^{\frac{b^{\frac{p}2}R}{c_p\e^\frac12\s_\e^\frac12}}\frac{R^{n-2}r}{(1+r)^{2}}dr+O\left(\e\s_\e\right)\\
=~&\omega_nb^{\frac{p}2}c_pR^{n-1}\e^\frac12\s_\e^\frac12
+\frac12(n-1)\omega_nc_p^2 {R^{n-2}}\e\lambda_\e\log(\e\lambda_\e)
+O\left(\e\s_\e\right)
\end{aligned}
\end{equation}
and
\begin{equation}
\label{int-upper}
\begin{aligned}
\int_{B_R(0)}{W}_\e^p dx=~&\omega_n\int^R_{R/2}r^{n-1}{W}_{\e}^{p}dr+\omega_n\int_0^{R/2}r^{n-1}{W}_{\e}^{p}dr
\\
\leq~&\omega_n\int^R_{R/2}r^{n-1}b^p\left(\frac{R}{r}\right)^{\theta p}
\bigg(1+\frac{b^{\frac{p}2}(R-r)}{\tilde{c}_p\e^\frac12\s_\e^\frac12}\bigg)^{-2}dr+C\e\lambda_\e\\
=~&\omega_nb^{\frac{p}2}\tilde{c}_p\e^\frac12\s_\e^\frac12\int_0^{\frac{b^{\frac{p}2}R}{2\tilde{c}_p\e^\frac12\s_\e^\frac12}}
\frac{\big(R-\frac{\tilde{c}_p\e^\frac12\s_\e^\frac12}{b^{\frac{p}2}}r\big)^{n-1-\frac{p(n-1)}2}R^{\theta p}}{(1+r)^2}dr+O(\e\lambda_\e)\\
=~&\omega_nb^{\frac{p}2}c_{p}\e^\frac12\s_\e^\frac12 R^{n-1}+
\frac{1}{2}(n-1-\theta p)\omega_nc_p^2 { ^{n-2}}\e\lambda_\e\log(\e\lambda_\e)
+O\left(\e \s_\e\right).
\end{aligned}
\end{equation}
Therefore we conclude that
$$
\int_{B_R(0)}{W}_\e^p dx=\omega_nb^{\frac{p}2}c_p R^{n-1}\e^\frac12\s_\e^\frac12 +O(\e\s_\e\log(\e\s_\e)).
$$
It implies that
$$m\s_\e^\frac12=\omega_nb^{\frac{p}2}c_p R^{n-1}\e^\frac12+O(\e^\frac32\log\e).$$
As a consequence, we get
\begin{equation}\label{intest}
\int_{B_R(0)}{W}_\e^p dx=\frac{\omega_n^2b^{p}c_p^2R^{2n-2}}{m}\e+O(\e^2\log\e)
\end{equation}
and the desired conclusion of Lemma \ref{le5.3}.
\end{proof}

\subsection{Asymptotic profile as $\chi \to \infty$}
We can characterize the profile of the steady state $(U,W)$ as $\chi\rightarrow\infty$.
\begin{lemma}\label{Thm-profile}
Let $(U, W)(r)$ be the unique radial solution of \eqref{2} in $B_R(0)$. Then as $\chi\rightarrow\infty$, $U$
concentrates on the boundary $\partial B_R(0)$ and $W$ converges to the boundary value $b$. That is as $\chi\rightarrow\infty$, it holds that
\begin{equation}\label{eqn2.15}
\omega_nr^{n-1}U(r)\rightarrow m\delta(r-R) \text{ in the sense of distribution},\end{equation}
\begin{equation}\label{eqn2.16}
W(r)\rightarrow b \text{ in} \ \C(\overline{B_R}),
\end{equation}
where $\delta(r-R)$ is the Dirac function centered at $r=R$.
\end{lemma}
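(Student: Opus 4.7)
The plan is to first establish the uniform convergence $W_p\to b$ via a quantitative ODE lower bound, and then deduce the distributional statement \eqref{eqn2.15} by a cutoff test-function argument. Throughout I write $W_p$ for $W$ to highlight the dependence on $p$ and set $\mu_p:=\lambda_p b^p$, where $\lambda_p = m/\int_{B_R}W_p^p\,dx$.

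The first step is a polynomial bound $\mu_p\leq C_1 p$. Integrating the radial equation $\e(r^{n-1}W_p')'=\lambda_p r^{n-1}W_p^{p+1}$ over $B_r$, using $W_p\leq b$ and $\int_{B_R}U_p\,dx = m$, yields $\e\omega_n r^{n-1}W_p'(r)=\int_{B_r}U_pW_p\,dx\leq bm$. Hence $W_p'(r)\leq C_0 := 2^{n-1}bm/(\e\omega_n R^{n-1})$ on $[R/2,R]$, and integration gives $W_p(r)\geq b-C_0(R-r)$ on a fixed boundary strip $[R-\delta_1,R]$ with $\delta_1>0$ independent of $p$. Integrating $(b-C_0(R-r))^p$ over this strip produces $\int_{B_R}W_p^p\,dx\geq C_2 b^p/p$ with $C_2>0$ independent of $p$, and the constraint $\mu_p\int(W_p/b)^p\,dx = m/b$ then delivers $\mu_p\leq C_1 p$.

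Next I convert this into $W_p\to b$ uniformly. By the radial monotonicity of Lemma \ref{Thm-steady}, $W_p(s)\leq W_p(r)$ for $s\leq r$, so the integrated ODE gives $r^{n-1}W_p'(r)\leq (\lambda_p/\e)W_p^{p+1}(r)r^n/n$, equivalently $(W_p^{-p})'(r) \geq -p\lambda_p r/(n\e)$. Integrating from $r$ to $R$ and using $W_p(R)=b$ yields
\[
W_p(r) \geq b\bigl(1 + p\mu_p (R^2-r^2)/(2n\e)\bigr)^{-1/p}.
\]
Since $p\mu_p\leq C_1 p^2$ by the previous step, the right-hand side equals $b\exp\{-O(\log p)/p\}\to b$ uniformly in $r\in[0,R]$, whence $W_p(0)\to b$. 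The monotonicity $W_p(0)\leq W_p(r)\leq b$ then forces $\|W_p-b\|_{L^\infty(\overline{B_R})}\leq b - W_p(0)\to 0$, establishing \eqref{eqn2.16}.

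Finally for \eqref{eqn2.15}, testing against $\phi\in C([0,R])$ and using $\int U_p\,dx = m$ gives $\int_{B_R}U_p\phi(|x|)\,dx - m\phi(R)=\int_{B_R}(\phi(|x|)-\phi(R))U_p\,dx$. Splitting this over $B_{R-\delta}$ and its complement, the outer annulus contributes at most $\omega_\phi(\delta) m$ where $\omega_\phi$ is the modulus of continuity of $\phi$, reducing matters to showing $\int_{B_{R-\delta}}U_p\,dx\to 0$ for each fixed $\delta>0$. Pick a radial cutoff $\psi\in C_c^\infty(B_R)$ with $\psi\equiv 1$ on $B_{R-\delta}$; using $W_p\geq W_p(0)$ and the equation,
\[
\int U_p\psi\,dx \leq \frac{\lambda_p}{W_p(0)}\int W_p^{p+1}\psi\,dx = \frac{\e}{W_p(0)}\int(W_p-b)\Delta\psi\,dx,
\]
where the last equality uses two integrations by parts (boundary terms vanish since $\psi\in C_c^\infty$) together with $\int\Delta\psi\,dx = 0$. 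Since $\|W_p-b\|_\infty\to 0$ and $W_p(0)\to b>0$, the right-hand side tends to zero, completing the argument. The central delicacy is that $\mu_p$ genuinely diverges (heuristically like $p^2$), so quadratic sub-solutions of the form $b(1-A(1-r^2/R^2))$ fail because their sub-solution inequality forces $\mu_p\leq 2n\e A/R^2$; the saving grace is the structure $b(\cdot)^{-1/p}$ of the ODE lower bound, whose $-1/p$ exponent precisely absorbs polynomial growth of $\mu_p$.
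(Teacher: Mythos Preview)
Your proof is correct (modulo the harmless typo $\mu_p\int(W_p/b)^p\,dx=m$ rather than $m/b$) and takes a genuinely different route from the paper. The paper proceeds by soft compactness: it extracts limits via Helly and Arzel\`a--Ascoli, introduces $F=W_r/W$, and argues by contradiction that any putative interior mass of $U_\infty$ would force exponential growth of $U$ through $U_r=\chi UF$, violating the mass constraint; the uniform limit $W\to b$ is then read off from the limiting integrated equation. Your argument is instead fully quantitative: you bound $\mu_p=\lambda_p b^p=O(p)$ via a boundary-strip lower bound on $\int W_p^p$, convert the radial ODE into the differential inequality $(W_p^{-p})'\geq -p\lambda_p r/(n\e)$, and integrate to the explicit lower bound $W_p(r)\geq b(1+p\mu_p(R^2-r^2)/(2n\e))^{-1/p}$, whose $-1/p$ exponent kills any polynomial growth of $p\mu_p$. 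This yields uniform convergence with an explicit rate and avoids subsequences entirely; the paper's approach, while less sharp, is perhaps more robust in that it does not rely on the precise algebraic structure of the nonlinearity. Your final cutoff argument, using $\lambda_p W_p^{p+1}=\e\Delta W_p$ and $\int\Delta\psi=0$ to bound $\int_{B_{R-\delta}}U_p$ by $\e\|W_p-b\|_\infty\|\Delta\psi\|_{L^1}/W_p(0)$, is a clean alternative to the paper's pointwise $U\to 0$ deduced from compactness.
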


\begin{proof} The proof consists of three steps.

\emph{Step 1.}  Since $\omega_n\int_0^Rr^{n-1}U(r)dr=m$ and $U(r)$ is monotonically increasing on $[0,R]$, by a contradiction argument, one can see that for any $0<\eta<R$, there exists a constant $M_\eta>0$ such that
\begin{equation}\label{bound U}
U(r)<M_\eta \text{ for any } r\in[0,R-\eta].
\end{equation}
Hence by the Helly's compactness theorem and the diagonal argument, after passing to a subsequence of $\chi\rightarrow\infty$, we have for any $0<\eta<R$
$$U(r)\rightarrow \text{some }U_\infty(r) \text{ pointwise on } [0,R-\eta],$$ and $U_\infty(r)\in L^1(0,R-\eta)$ is monotonically increasing.

\emph{Step 2.} Set $F:=\frac{W_r}{W}$. Then $(U,F)$ satisfies
\begin{eqnarray}\label{2.12}
\left\{
\begin{array}{lll}
U_r=\chi UF,& r\in (0,R),\\[1mm]
\varepsilon F_r+\varepsilon \cdot\frac{n-1}{r}F+\varepsilon F^2=U,& r\in (0,R).
\end{array}
\right.
\end{eqnarray}
By \eqref{2.16}, we get $F(r)>0$ for any $r\in(0,R]$.

We claim $U_\infty\equiv0$ on $[0,R)$. Otherwise, there exists $r_0\in[0,R)$ such that $U_\infty(r_0)>0$. By the monotonicity of $U_\infty$, we assume $r_0>0$. Taking $r_1\in(r_0,R)$, then
\begin{equation}\label{2.14}
U_\infty(r)>U_\infty(r_0):= d_1>0 \text{ for } r\in[r_0,r_1].\end{equation}
Write the second equation of \eqref{2.12} as
\[\varepsilon (r^{n-1}F)_r+\varepsilon r^{n-1}F^2=r^{n-1}U.\]
Integrating this equation over $(0,r)$ for $r\in(r_0,r_1)$ yields
\[\varepsilon r^{n-1}F(r)\leq \int_0^Rr^{n-1}U(r)dr= \frac{m}{\omega_n} \text{ for } r\in[r_0,r_1].\]
Thus, $F(r)\leq\frac{m}{\varepsilon r_0^{n-1}}$, $\forall r\in[r_0,r_1]$. When $\chi$ is large enough, the second equation of \eqref{2.12} further gives
\[\varepsilon F_r(r)\leq U(r)\leq 2U_\infty(r_1) \text{ for } r\in[r_0,r_1].\]
Thus, $F(r)$ is bounded in $\C^1([r_0,r_1])$ with respect to $\chi$. Thanks to the Arzel\'{a}-Ascoli theorem, there exists $F_\infty\in\C[r_0,r_1]$ such that
\[F\rightarrow F_\infty \text{ in }\C([r_0,r_1]) \text{ as } \chi\rightarrow\infty.\]

We next claim
\begin{equation}\label{2.23}
\text{ there exists } \bar{r}\in (r_0,r_1) \text{ such that } F_\infty(\bar{r})>0.
\end{equation}
Otherwise, $F_\infty\equiv0$ on $[r_0,r_1]$. Multiplying the second equation of \eqref{2.12} by a test function $\phi\in \C_0^\infty((r_0,r_1))$, then
\[\varepsilon \int_{r_0}^{r_1}F\phi_rdr+\varepsilon \int_{r_0}^{r_1}\frac{n-1}{r}F\phi dr+\varepsilon \int_{r_0}^{r_1}F^2\phi dr=\int_{r_0}^{r_1} U\phi dr.\]
Sending $\chi\rightarrow\infty$ and using the Lebesgue Dominated Convergence Theorem, we get
\[\int_{r_0}^{r_1} U_\infty\phi dr=0 \text{ for any }\phi\in \C_0^\infty((r_0,r_1)).\]
Thus, $U_\infty\equiv0$ on $(r_0,r_1)$, which contradicts  the assumption \eqref{2.14}.

By \eqref{2.23} and the continuity of the function $F_\infty(r)$ on $[r_0,r_1]$, there exists an interval $[r_2,r_3]\subset[r_0,r_1]$ such that $F_\infty(r)>d_2$ on $[r_2,r_3]$ for some constant $d_2>0$. Now integrating the first equation of \eqref{2.12}, in view of \eqref{2.14}, we have
\[U(r)=U(r_0)e^{\chi\int_{r_0}^rF(\tau)d\tau}\geq\frac{d_1}{2}e^{\chi d_2(r-r_2)} \text{ for } r>r_2.\]
It then follows that
\[m\geq\int_{r_2}^{r_3}r^{n-1}U(r)dr\geq\frac{d_1}{2}\int_{r_2}^{r_3}r^{n-1}e^{\chi d_2(r-r_2)}dr\rightarrow\infty \text{ as }\chi\rightarrow\infty,\]
which is a contradiction. Therefore, $U_\infty\equiv0$ on $[0,R)$, which implies \eqref{eqn2.15} due to $\int_{B_R}U(x)dx=m>0$.

\emph{Step 3.} It remains to show the limit of $W$. On one hand, by the maximum principle,
$0<W(r)\leq b \text{ for }r\in[0,R].$
On the other hand, since $U\left(\frac{R}{2}\right)\leq M_{\frac{R}{2}}$, integrating \eqref{2.13} gives
\begin{equation}\label{2.15}
0\leq\varepsilon  W_r(r)=\frac{1}{r^{n-1}}\int_0^rs^{n-1}UWds\leq bU\left(\frac{R}{2}\right)\frac{1}{r^{n-1}}\int_0^rs^{n-1}ds\leq C \text{ for }r\in\left[0,\frac{R}{2}\right].\end{equation}
Moreover,  for $r\in\left[\frac{R}{2},R\right]$, there holds that
\begin{equation*}\begin{split}
0\leq \varepsilon W_r(r)&=\frac{1}{r^{n-1}}\int_{\frac{R}{2}}^rs^{n-1}UWds+W_r\left(\frac{R}{2}\right)\leq2^{n-1}b\int_0^Rr^{n-1}Uds+W_r\left(\frac{R}{2}\right)\\&\leq2^{n-1}\frac{bm}{\omega_n}+W_r\left(\frac{R}{2}\right)\leq C.\end{split}\end{equation*}
Hence $\|W\|_{\C^1([0,R])}\leq C$. Thanks to the Arzel\'{a}-Ascoli theorem, there exists $W_\infty\in\C([0,R])$ such that
$W\rightarrow W_\infty \text{ in }\C([0,R]) \text{ as }\chi\rightarrow\infty,$
which also indicates that $W_\infty(R)=b.$ Integrating \eqref{2.13} twice gives
\[\varepsilon W(r)-\varepsilon W\left(\frac{R}{2}\right)=\int_{\frac{R}{2}}^r\frac{1}{s^{n-1}}\int_0^s\tau^{n-1}UWd\tau ds.\]
Now sending $\chi\rightarrow\infty$ and recalling $U\rightarrow0$ pointwise on $[0,R)$, by  the Lebesgue Dominated Convergence Theorem, we have
\[W_\infty(r)=W_\infty\left(\frac{R}{2}\right) \text{ for any } r\in[0,R].\]
Therefore, $W_\infty(r)\equiv b$, and we obtain $W\rightarrow b$ in $\C(\overline{B_R})$ as $\chi\rightarrow\infty$.
\end{proof}

\subsection{Proof of Theorem \ref{radial}} First the existence and uniqueness of radially symmetric solution with monotonicity follows from Lemma \ref{Thm-steady}. By Lemma \ref{le5.2} and Lemma \ref{le5.3}, we get
\begin{equation}
\label{nnder}
{W}_\e'(R)=\frac{p mb}{(2+p) \omega_nR^{n-1}}\frac{1}{\e}+O(\log\e)
\end{equation}
which gives the expansion for ${W}_\e'(R)$ in \eqref{asy} directly. With \eqref{3} and \eqref{intest}, the expansion for ${U}_\e'(R)$ in \eqref{asy} is obtained.  In the following, we shall use Lemma \ref{le5.1} (with $\te$ replaced by $\e\lambda_\e$ there) to derive the first order expansion of $R-r_\e(R,c)$ as $\e\to 0$ for a given $c\in(0,b).$ By Lemma \ref{le5.1}, using the sub-solution of \eqref{5.nonlocal-p} given by $W_{\e,l}(r)$, we can solve $W_{\e,l}(r)=c,$ to obtain
we have
\begin{equation*}
r_{1,\e}(c):=W_{\e,l}^{-1}(c)=R-\dfrac{b^{\frac{p}{2}}-c^{\frac{p}{2}}}{b^{\frac{p}{2}}c^{\frac{p}{2}}}c_p\e^\frac12\s_\e^\frac12.
\end{equation*}
Next, we use the supersolution of \eqref{5.nonlocal-p} given by $W_{\e,u}(r)$ in \eqref{5.2} (with $\e$ replaced by $\e\s_\e$ there) and solve $W_{\e,u}(r)=c,$ to get
\begin{equation*}
r_{2,\e}(c):=W_{\e,u}^{-1}(c)=\begin{cases}
R-\dfrac{b^{\frac{p}{2}}-c^{\frac{p}{2}}}{b^{\frac{p}{2}}c^{\frac{p}{2}}}c_p\e^\frac12\s_\e^\frac12+O(\e\s_\e),\quad &\mbox{if}~n\geq2,\\[3mm]
R-\dfrac{b^{\frac{p}{2}}-c^{\frac{p}{2}}}{b^{\frac{p}{2}}c^{\frac{p}{2}}}c_p\e^\frac12\s_\e^\frac12+O(\e^{2/p+1}),\quad &\mbox{if}~n=1.
\end{cases}
\end{equation*}
It is easy to see that $r_{2,\e}(c)\leq r_\e(R,c)\leq r_{1,\e}(c)$, then we have
\begin{equation*}
r_\e(R,c)=R-\dfrac{b^{\frac{p}{2}}-c^{\frac{p}{2}}}{b^{\frac{p}{2}}c^{\frac{p}{2}}}c_p\e^\frac12\s_\e^\frac12+O(\e^{2/p+1}).
\end{equation*}
Using Lemma \ref{le5.3}, we further get that
\begin{equation*}
r_\e(R,c)=R-\dfrac{b^{\frac{p}{2}}-c^{\frac{p}{2}}}{c^{\frac{p}{2}}}\frac{\omega_nc_p^2R^{n-1}}{m}\e+o_\e(1)\e
\end{equation*}
which yields \eqref{thick2} and completes the proof of Theorem \ref{radial}-(i).

The conclusion of Theorem \ref{radial}-(ii) comes from Lemma \ref{Thm-profile}. The proof of Theorem \ref{radial} is completed.

\begin{remark}\label{re-expansion}
The expansion \eqref{nnder} differs from other related problems as \cite{Lee} for the exponential case on the nonlinearity, as mentioned in the introduction, since the second-order term is of order one in that case.
\end{remark}

\section{Nonlinear stability of the radial boundary-layer steady state}

\subsection{A preliminary result}
The monotonicity of radially symmetric steady-state solution $(U,W)(r)$ will be essentially used later to prove the stability result. We further show the following result concerning a sign property.

\begin{lemma}\label{lemma2.6}
Set $V:=\ln W$. Then it follows that
\[\begin{split}
(r^{n-1}V_r)_r>0 \ \text{ for any} \ r\in[0,R].
\end{split}\]

\end{lemma}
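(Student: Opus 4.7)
The plan is to translate the claim into a pointwise inequality on $W$ and then derive that inequality from a Pohozaev-type energy identity obtained by integrating the radial equation. First, since $V_r = W_r/W$ and $V$ is smooth (because $W>0$), I would compute
\[
(r^{n-1}V_r)_r \;=\; \Bigl(\tfrac{r^{n-1}W_r}{W}\Bigr)_r \;=\; \tfrac{(r^{n-1}W_r)_r}{W} - \tfrac{r^{n-1}W_r^2}{W^2} \;=\; \tfrac{r^{n-1}}{\varepsilon}\bigl(U - \varepsilon V_r^2\bigr),
\]
where the last equality uses the radial form $\varepsilon(r^{n-1}W_r)_r = r^{n-1}UW$ of the steady-state equation. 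Thus the lemma reduces to showing $UW^2 > \varepsilon W_r^2$ on $[0,R]$.

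Next I would exploit the algebraic structure $U=\lambda W^p$ from \eqref{3}, which gives the key identity $U_r = p\, U W_r/W$ and therefore
\[
(UW^2)_r = U_rW^2 + 2UWW_r = (p+2)\,UWW_r,
\]
so $UWW_r$ is an exact derivative. I would then multiply the non-divergence form $\varepsilon W_{rr} = UW - \varepsilon \tfrac{n-1}{r}W_r$ by $W_r$, integrate from $0$ to $r$, and use $W_r(0)=0$ to obtain the energy identity
\[
\tfrac{\varepsilon}{2} W_r(r)^2 \;=\; \tfrac{1}{p+2}\bigl(UW^2(r)-UW^2(0)\bigr) - \varepsilon\int_0^r \tfrac{n-1}{s}W_s^2\,ds.
\]
Since $UW^2(0)>0$ and the last integral is nonnegative (with $W_s>0$ for $s>0$ by Lemma \ref{Thm-steady}), this yields the strict bound $\varepsilon W_r^2 < \tfrac{2}{p+2}UW^2 \le UW^2$ for all $r\in(0,R]$.

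Combining with the first display gives $(r^{n-1}V_r)_r = \tfrac{r^{n-1}}{\varepsilon}(U-\varepsilon V_r^2) > 0$ on $(0,R]$, with the usual convention that at $r=0$ the quantity vanishes as a limit when $n\ge 2$; this is the intended content of the lemma as used in the stability proof. The only delicate point is making sure the boundary term $W_r(0)=0$ genuinely kills the lower limit of integration and that the identity $UWW_r=\tfrac{1}{p+2}(UW^2)_r$ is available, which is precisely the benefit of having $U$ proportional to a pure power of $W$; everything else is routine manipulation of the radial ODE.
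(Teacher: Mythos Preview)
Your proof is correct and takes a genuinely different route from the paper. The paper argues by contradiction: it writes $\varepsilon(r^{n-1}V_r)_r=r^{n-1}(U-\varepsilon V_r^2)$, notes the quantity is positive near $r=0$, assumes a first zero at some $r_1$, and then differentiates once more and uses only the monotonicity $U_r>0$ to show $(r^{n-1}V_r)_{rr}\big|_{r_1}>0$, contradicting that $r^{n-1}V_r$ was increasing on $(0,r_1)$. Your argument instead exploits the explicit power law $U=\lambda W^p$ to recognize $UWW_r=\tfrac{1}{p+2}(UW^2)_r$ as an exact derivative and derives the Pohozaev-type identity
\[
\tfrac{\varepsilon}{2}W_r^2=\tfrac{1}{p+2}\bigl(UW^2-UW^2(0)\bigr)-\varepsilon(n-1)\int_0^r\tfrac{W_s^2}{s}\,ds,
\]
which immediately yields the strict quantitative bound $\varepsilon V_r^2<\tfrac{2}{p+2}U$. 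Your approach is shorter and delivers a uniform lower bound $U-\varepsilon V_r^2\ge\tfrac{p}{p+2}U$, potentially useful elsewhere; the paper's approach is less sharp but more robust, since it would still work for any strictly increasing $U$ without the specific power relation. One small point worth making explicit in your write-up is the convergence of $\int_0^r s^{-1}W_s^2\,ds$ near $s=0$, which follows from $W_s=O(s)$ (obtained by integrating $\varepsilon(r^{n-1}W_r)_r=r^{n-1}UW$); you allude to this but do not quite state it. Your remark about the value at $r=0$ for $n\ge 2$ is also correct, and the paper's own statement is slightly imprecise there as well.
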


\begin{proof} Since $W>0$, dividing the second equation of \eqref{eqn2.9} by $W$, we get
\begin{equation}\label{2.8}
\varepsilon\left(\frac{W_r}{W}\right)_r+\varepsilon\left(\frac{W_r}{W}\right)^2+\varepsilon\cdot\frac{n-1}{r}\cdot\frac{W_r}{W}=U.
\end{equation}
Noting $V_r=\frac{W_r}{W}$, one can see from \eqref{2.8} that $V$ satisfies
\begin{equation}\label{2.20}
\varepsilon(r^{n-1}V_r)_r=r^{n-1}(U-\varepsilon V_r^2).\end{equation}
Since $U(0)>0$ and $V_r(0)=0$, there exists $r_0>0$ such that
\[(r^{n-1}V_r)_r>0, \ \forall \ r\in(0,r_0].\]
We claim $(r^{n-1}V_r)_r>0$  for all  $r\in[0,R].$ Otherwise, denote by $r_1\in(r_0,R]$ the first number such that $(r^{n-1}V_r)_r|_{r=r_1}=0$. In other words
$(r^{n-1}V_r)_r>0$, $\forall r\in(0,r_1)$, and hence the function $r^{n-1}V_r(r)$ is monotonically increasing on $(0,r_1)$.
In view of \eqref{2.20}, we have
\begin{equation}\label{2.26}
U(r_1)-\varepsilon V_r^2(r_1)=0.\end{equation}
Moreover a simple calculation for \eqref{2.20} along with \eqref{2.26}  gives
\begin{equation}\label{2.21}
V_{rr}(r_1)r_1^{n-1}=-(n-1)r_1^{n-2}V_r(r_1).\end{equation}
Differentiating \eqref{2.20} in $r$ gives
\[\varepsilon(r^{n-1}V_r)_{rr}=r^{n-1}(U_r-2\varepsilon V_rV_{rr})+(n-1)r^{n-2}(U-\varepsilon V_r^2).\]
Now by \eqref{2.26} and \eqref{2.21}, we have
\[\begin{split}
\varepsilon(r^{n-1}V_r)_{rr}|_{r=r_1}&=r_1^{n-1}(U_r-2\varepsilon V_rV_{rr})|_{r=r_1}\\
&=r_1^{n-1}U_r(r_1)+2\varepsilon(n-1)r_1^{n-2}V_r^2(r_1)>0,
\end{split}\] where we have used the monotonicity of $U(r)$ in the last inequality (see Theorem \ref{Thm-steady}).
Thus the function $r^{n-1}V_r(r)$ takes a local minimum at $r=r_1$, which contradicts  the fact that it is monotonically increasing on $(0,r_1)$. Therefore, $(r^{n-1}V_r)_r>0$  for all  $r\in[0,R].$
\end{proof}

\subsection{Nonlinear stability of $(U,V)(r)$ on $B_R(0)$}\label{nonlinear stability}

In this section, we study the asymptotic stability of the steady state $(U,W)$ to the system \eqref{KS}  under radial perturbations. Since the $n=1$ case has been achieved in our previous work \cite{Carrillo}, we only consider the case $n=2,3$.

\begin{theorem}\label{Thm-stability} Let $n=2,3$.
Assume that the initial datum $(u_0,w_0)$ is radially symmetric with $u_0>0$ and $w_0>0$, and that $u_0\in H^2(B_R)$, $w_0-b\in H_0^1(B_R)\cap H^2(B_R)$. Let $(U,W)$ be the unique steady state obtained in Lemma \ref{Thm-steady} with $\int_{B_R}U(x)dx=\int_{B_R}u_0(x)dx.$
Then there exists a constant $\delta_1>0$ such that if the initial datum satisfies
\[\|(u_0-U,w_0-W)\|_{H^2(B_R)}\leq\delta_1,\]
then  the system \eqref{KS}  admits a unique global radial solution  $(u,w)\in\C([0,+\infty);H^2(B_R))$, which satisfies $u(x,t)>0$, $w(x,t)>0$ on $B_R\times(0,+\infty)$, and
\begin{equation}\label{3.1}
\|(u-U,w-W)(\cdot,t)\|^2_{H^2(B_R)}\leq C\|(u_0-U,w_0-W)\|^2_{H^2(B_R)}e^{-\mu t},\end{equation}
where $C$ and $\mu$ are positive constants independent of $t$.
\end{theorem}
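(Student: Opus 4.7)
My plan is to follow the two-step scheme outlined in the introduction. First, I would remove the logarithmic singularity in \eqref{KS} via the Cole--Hopf type substitution $v=\ln w$, $V=\ln W$, converting the $w$-equation into the regular equation $v_t=\varepsilon\Delta v+\varepsilon|\nabla v|^2-u$ and the first equation into $u_t=\Delta u-\chi\nabla\cdot(u\nabla v)$. Writing the perturbation as $(\phi,\psi)=(u-U,v-V)$ one obtains a regular semilinear system on $B_R\times(0,\infty)$ with homogeneous Dirichlet data for $\psi$ on $\partial B_R$, zero-flux for $\phi$ after linearising about $(U,V)$, and initial data small in $H^2$. Local existence in $H^2$ follows from standard parabolic theory for this regular system, so the whole question reduces to closing a global a priori estimate.

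The zero-flux condition does not furnish a boundary value for $\phi$ itself, so to use integration by parts cleanly I would pass to the radial mass-distribution perturbation
\[
\Phi(r,t)=\int_0^r s^{n-1}\phi(s,t)\,ds,\qquad r\in[0,R].
\]
Since the total mass of $u$ is conserved and by assumption equals that of $U$, we have $\Phi(0,t)=\Phi(R,t)=0$ for all $t\ge 0$. Integrating the $\phi$-equation once in $r$ yields
\[
\Phi_t=\Phi_{rr}-\tfrac{n-1}{r}\Phi_r-\chi V_r\Phi_r-\chi\, r^{n-1}U\psi_r-\chi\,\Phi_r\psi_r,
\]
a semilinear parabolic equation with full Dirichlet data, coupled with the regular equation for $\psi$. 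This is the radial analogue of the anti-derivative trick used in \cite{Carrillo} in one dimension.

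The core of the proof is then a chain of time-weighted energy estimates in the weighted measure $r^{n-1}dr$, building up from the $L^2$ to the $H^2$ level for the pair $(\Phi,\psi)$. Combining them, I expect to obtain a Lyapunov-type inequality
\[
\frac{d}{dt}\mathcal{E}(t)+\mu\,\mathcal{E}(t)\le C\,\mathcal{E}(t)^{3/2},
\]
where $\mathcal{E}(t)$ is equivalent to $\|\phi(\cdot,t)\|_{H^2(B_R)}^2+\|\psi(\cdot,t)\|_{H^2(B_R)}^2$. Smallness of the initial datum together with a standard continuation/bootstrap argument then yields global existence and exponential decay $\mathcal{E}(t)\le C\mathcal{E}(0)e^{-\mu t}$. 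The restriction $1\le n\le 3$ enters exclusively through the Sobolev embedding $H^2(B_R)\hookrightarrow L^\infty(B_R)$, used both to control the quadratic nonlinearities $\varepsilon|\nabla\psi|^2$ and $\Phi_r\psi_r$ in the $H^2$ estimate and to deduce the pointwise decay \eqref{3.1-2.9} from the $H^2$ decay; positivity of $u$ and $w$ is preserved along the flow by the maximum principle.

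The main obstacle is producing the spectral gap $\mu>0$ at the $L^2$ level. The drift coefficient $V_r$ and the multiplicative factor $U$ in the $\Phi$-equation both concentrate near $r=R$ at order $\varepsilon^{-1}$ by Theorem \ref{radial}, so the cross term $-\chi\int_0^R r^{n-1}U\,\psi_r\,\Phi\,dr$ that emerges from the basic energy estimate is not manifestly controllable. The key structural input here is Lemma \ref{lemma2.6}: the sign $(r^{n-1}V_r)_r>0$ on $[0,R]$. Integrating by parts in the cross term, and using the steady-state relation between $U$ and $(r^{n-1}V_r)_r$, transforms this cross term into a positive quadratic form weighted by $(r^{n-1}V_r)_r$, thereby producing an $\varepsilon$-uniform coercive contribution that dominates the remaining large coefficients and delivers the spectral gap. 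Once this coercivity is secured at the $L^2$ level, the $H^1$ and $H^2$ estimates follow by differentiating the equations and repeating the argument, which is the routine but technically heavy part of the proof.
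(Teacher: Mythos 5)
Your overall route (the substitution $v=\ln w$, passage to the radial mass--distribution perturbation with Dirichlet data at $r=0$ and $r=R$, time-weighted energy estimates from $L^2$ up to $H^2$, a continuation argument, the Sobolev embedding for $n\le 3$, and the maximum principle for positivity) is exactly the paper's scheme, and your equation for $\Phi$ is the correct radial analogue of the one-dimensional anti-derivative trick. The gap is precisely in the step you single out as the crux, the basic $L^2$ estimate. In the paper the dangerous coupling is \emph{not} handled by Lemma \ref{lemma2.6}: it is handled by testing the $\phi$-equation with the relative-entropy-type weight $\frac{r^{n-1}\phi}{U}$ and the $\psi$-equation with $\chi r^{n-1}\psi$, so that the two cross terms $-\chi\int_0^R r^{n-1}\phi\,\psi_r$ and $-\chi\int_0^R (r^{n-1}\phi)_r\,\psi$ cancel exactly after one integration by parts (using $\phi(R,t)=\psi(R,t)=0$), while the transport term disappears because the steady state satisfies $(1/U)_r+\chi V_r/U=0$; see \eqref{3.4}. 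Lemma \ref{lemma2.6} enters only to guarantee that the extra term $\chi\varepsilon\int_0^R(r^{n-1}V_r)_r\psi^2$, produced by the drift $2\varepsilon V_r\psi_r$ in the $\psi$-equation, has a favorable sign. Your proposed mechanism --- ``integrating by parts in the cross term \dots transforms this cross term into a positive quadratic form weighted by $(r^{n-1}V_r)_r$'' --- cannot work as stated: the cross term is bilinear in the two distinct unknowns $\Phi$ and $\psi_r$, so it can only be cancelled against its counterpart from the $\psi$-equation (which is exactly what the weight $1/U$ achieves) or absorbed by Cauchy--Schwarz, and since $U$ and $V_r$ are of size $\varepsilon^{-1}$ near $r=R$ the latter is precisely what you observed cannot be done directly. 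Without the $1/U$ weight your $L^2$ estimate does not close.

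A related misattribution: the exponential rate is not an $\varepsilon$-uniform spectral gap, and none is needed. In \eqref{3.13} the decay is obtained by dominating the weighted energy by the dissipation, using Poincar\'e's inequality for $\psi$ (homogeneous Dirichlet data) and the Hardy-type bound $\int_{B_R}\phi^2\le \frac{\omega_n R^2}{2n}\int_{B_R}(u-U)^2$ for the mass-distribution variable, and then taking $\mu$ small; the constants $C,\mu$ in \eqref{3.1} are only claimed to be independent of $t$, not of $\varepsilon$. Two further points of execution: the paper does not produce a single Lyapunov inequality at the $H^2$ level of the form $\frac{d}{dt}\mathcal{E}+\mu\mathcal{E}\le C\mathcal{E}^{3/2}$; instead it iterates time-weighted estimates on $\phi_t$, $\psi_t$ and on $h=ue^{-\chi v}$ combined with elliptic regularity (Lemmas \ref{L2}--\ref{H2-u}), and it must still prove uniqueness in the original variables and reconstruct $w=e^{v}$ with the correct relation between $\delta_1$ and $\delta_2$ (Steps 1--2 of the proof of Theorem \ref{Thm-stability}); these last items are comparatively routine but are missing from your outline.
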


\begin{corollary}\label{corollary}By the Sobolev imbedding theorem, we further have the following  asymptotic convergence:
\[\|(u-U,w-W)(\cdot,t)\|^2_{\C(\overline{B_R})}\leq Ce^{-\mu t}.\]
\end{corollary}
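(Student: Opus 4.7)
The plan is to derive the corollary as an immediate consequence of Theorem \ref{Thm-stability} via a standard Sobolev embedding, without reproving anything. Since $n\in\{2,3\}$, we have $2>n/2$, so by the Morrey/Sobolev embedding theorem the space $H^2(B_R)$ continuously embeds into $C(\overline{B_R})$ (in fact into $C^{0,\alpha}(\overline{B_R})$ for some $\alpha>0$). Thus there exists a constant $C_S=C_S(n,R)>0$ such that for every $f\in H^2(B_R)$,
\[
\|f\|_{C(\overline{B_R})}\leq C_S\|f\|_{H^2(B_R)}.
\]

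Applying this with $f=u(\cdot,t)-U$ and $f=w(\cdot,t)-W$, which belong to $H^2(B_R)$ for every $t\geq 0$ by the global existence statement of Theorem \ref{Thm-stability}, and then invoking the exponential decay estimate \eqref{3.1}, I would conclude
\[
\|(u-U,w-W)(\cdot,t)\|_{C(\overline{B_R})}^2 \leq C_S^2\,\|(u-U,w-W)(\cdot,t)\|_{H^2(B_R)}^2 \leq C_S^2\,C\,\|(u_0-U,w_0-W)\|_{H^2(B_R)}^2\,e^{-\mu t}.
\]
Because the initial perturbation is controlled by the fixed smallness parameter $\delta_1$, the product $C_S^2\,C\,\|(u_0-U,w_0-W)\|_{H^2(B_R)}^2$ is a finite constant independent of $t$, which can be absorbed into a new constant (still denoted $C$), yielding the claimed bound. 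There is no genuine obstacle here: the only point worth flagging is the restriction $n\leq 3$, which is precisely what guarantees the embedding $H^2\hookrightarrow C$ and is already assumed throughout this section.
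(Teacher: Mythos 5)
Your proposal is correct and is exactly the argument the paper intends: the corollary is stated as an immediate consequence of the embedding $H^2(B_R)\hookrightarrow C(\overline{B_R})$ (valid since $n\leq 3$, i.e.\ $2>n/2$) applied to the exponential $H^2$-decay \eqref{3.1} of Theorem \ref{Thm-stability}, with the fixed initial-perturbation norm absorbed into the constant. Nothing further is needed.
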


Theorem \ref{th-stability} is a direct consequence of Theorem \ref{Thm-stability} and Corollary \ref{corollary}. To show Theorem \ref{Thm-stability}, we first note that since $w|_{\partial B_R(0)}=b>0$, if $w_0(x)>0$ for all $x\in B_R(0)$, then by the maximum principle, we have $w(x,t)>0$ for all $x\in B_R(0)$ and $t>0$. Thus, one can remove the logarithmic nonlinearity of the sensitive function of \eqref{KS} by setting $v:=\ln w$. Then the system \eqref{KS}  is transformed into
\begin{eqnarray}\label{trans}
\left\{
\begin{array}{lll}
u_{t}=\Delta u-\chi\nabla\cdot(u\nabla v),& x\in  B_R(0),\ t>0, \\[1mm]
v_{t}=\varepsilon\Delta v+\varepsilon|\nabla v|^2-u,& x\in  B_R(0),\ t>0, \\[1mm]
u(x,0)=u_0(x),\ v(x,0)=v_0(x):=\ln w_0(x), & x\in  B_R(0), \\[1mm]
\frac{\partial u}{\partial \nu}-\chi u\frac{\partial v}{\partial\nu}=0, \ v=\ln b, & x\in \partial B_R(0),\ t>0.
\end{array}
\right.
\end{eqnarray}
Let $(U,W)$ be the unique positive solution of \eqref{2} with $\Omega=B_R(0)$ and $m=\int_{B_R}u_0(x)dx$ (see Lemma \ref{Thm-steady}). Set $V=\ln W$. Then it is straightforward to check that $(U, V)$  satisfies
\begin{eqnarray}\label{3.2}
\left\{
\begin{array}{lll}
\Delta U-\chi\nabla\cdot(U\nabla V)=0,& x\in  B_R(0), \\[1mm]
\varepsilon\Delta V+\varepsilon|\nabla V|^2-U=0,& x\in  B_R(0), \\[1mm]
\frac{\partial U}{\partial \nu}-\chi U\frac{\partial V}{\partial\nu}=0, \ V=\ln b, & x\in \partial B_R(0),\\[1mm]
\int_{B_R}Udx=\int_{B_R}u_0(x)dx,
\end{array}
\right.
\end{eqnarray}
which implies that $(U, V)$ is a steady state of \eqref{trans}. Using the uniqueness of positive solutions of \eqref{2}, one can immediately show that $(U, V)$ is the unique solution of \eqref{3.2}.

We next investigate the stability of the steady state to \eqref{trans}. Applying the contraction mapping theorem, one can readily derive the local wellposedness of \eqref{trans}. The routine and tedious proof details are omitted for brevity.
\begin{proposition}[Local well-posedness]\label{local}
For any $\Xi>0$, assume that the initial datum $(u_0,v_0)$ satisfies
\begin{equation*}
0<u_0\in H^2(B_R),\ v_0-\ln b\in H_0^1(B_R)\cap H^2(B_R) \text{ and } \|u_0-U\|_{H^2}+\|v_0-V\|_{H^2}\leq\Xi,
\end{equation*}where $(U,V)$ is the unique  solution of \eqref{3.2}.
Then, there exists a constant $T>0$ only depending on $\Xi$  such that the system  \eqref{trans} has a unique local solution $u\in C([0,T];H^2(B_R))$,  $v-\ln b\in H_0^1(B_R)\cap H^2(B_R)$, $(u,v)\in L^2((0,T);H^3(B_R))$, and for $t\in[0,T]$ it holds
\[\sup_{\tau\in[0,t]}\|(u-U,v-V)(\cdot,\tau)\|_{H^2}^2+\int_0^t\|(u-U,v-V)(\cdot,\tau)\|_{H^3}^2d\tau\leq 4\|(u_0-U,v_0-V)\|_{H^2}^2.\]

\end{proposition}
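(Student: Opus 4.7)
The plan is a Banach fixed-point argument applied to a suitable linearization of the perturbation system around $(U,V)$. Setting $\phi=u-U$ and $\psi=v-V$, where $(U,V)$ is the smooth steady state of \eqref{3.2}, and using the steady-state equations to cancel the time-independent contributions, the pair $(\phi,\psi)$ satisfies
\begin{align*}
\phi_t &= \Delta\phi - \chi\nabla\cdot\bigl(U\nabla\psi + \phi\nabla V + \phi\nabla\psi\bigr),\\
\psi_t &= \varepsilon\Delta\psi + 2\varepsilon\nabla V\cdot\nabla\psi + \varepsilon|\nabla\psi|^2 - \phi,
\end{align*}
supplemented with the homogeneous Dirichlet condition $\psi|_{\partial B_R}=0$ and the homogeneous flux condition $\bigl(\nabla\phi - \chi\phi\nabla V - \chi(U+\phi)\nabla\psi\bigr)\cdot\nu=0$ on $\partial B_R$, and initial data $(u_0-U,v_0-V)$ of $H^2$-norm at most $\Xi$. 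Since $(U,V)\in C^\infty(\overline{B_R})$ with $U$ bounded above and below (Theorem \ref{th1.1} and the strong maximum principle), $U$, $V$ and all their derivatives act as fixed smooth coefficients in the estimates.

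\textbf{Linear solver and self-map property.} Work on the closed ball
\[\mathcal{B}_T := \Bigl\{(\phi,\psi)\in X_T : \sup_{[0,T]}\|(\phi,\psi)(\cdot,\tau)\|_{H^2}^2 + \int_0^T\|(\phi,\psi)(\cdot,\tau)\|_{H^3}^2\,d\tau \leq 4\Xi^2\Bigr\}\]
in $X_T = C([0,T];H^2)\cap L^2(0,T;H^3)$, with boundary conditions as above and the prescribed initial data. For $(\bar\phi,\bar\psi)\in\mathcal{B}_T$, define $(\phi,\psi)=\mathcal{T}(\bar\phi,\bar\psi)$ by solving the linear problems in sequence: first the parabolic Dirichlet problem for $\psi$ with source $2\varepsilon\nabla V\cdot\nabla\bar\psi + \varepsilon|\nabla\bar\psi|^2 - \bar\phi$, then the parabolic flux-type problem for $\phi$ with divergence-form source $-\chi\nabla\cdot\bigl(U\nabla\psi + \bar\phi\nabla V + \bar\phi\nabla\bar\psi\bigr)$. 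Classical maximal $H^2$–$L^2 H^3$ parabolic regularity (Ladyzhenskaya–Solonnikov–Ural'tseva) applies to each; the nonlinear forcing terms $|\nabla\bar\psi|^2$ and $\bar\phi\nabla\bar\psi$ are controlled via the Sobolev embedding $H^2(B_R)\hookrightarrow W^{1,\infty}\cap L^\infty$, which is valid since $n\leq 3$. The a priori estimate has the structure
\[\|\mathcal T(\bar\phi,\bar\psi)\|_{X_T}^2 \leq C_0\|(u_0-U,v_0-V)\|_{H^2}^2 + C_1 T^{1/2}\bigl(1+\Xi^2\bigr)\Xi^2,\]
so after choosing $T=T(\Xi)>0$ sufficiently small the second term is absorbed into a small perturbation of the first and $\mathcal{T}$ maps $\mathcal{B}_T$ into itself with the claimed constant $4$.

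\textbf{Contraction, positivity, and main obstacle.} For the fixed point I would estimate the difference $\mathcal T(\bar\phi_1,\bar\psi_1)-\mathcal T(\bar\phi_2,\bar\psi_2)$ in the weaker norm $C([0,T];L^2)\cap L^2(0,T;H^1)$; there the quadratic nonlinearities become bilinear in the difference with an $H^2$-bounded factor, so a further (universal) shrinking of $T$, depending only on $\Xi$, produces a strict contraction. Banach's fixed-point theorem then delivers a unique $(\phi,\psi)\in\mathcal{B}_T$; translating back, $u=U+\phi\in C([0,T];H^2)\hookrightarrow C([0,T]\times\overline{B_R})$, and the hypothesis $u_0>0$ combined with continuity gives $u(\cdot,t)>0$ on a (possibly shorter) interval, whereupon $w=e^{v}>0$ is automatic so that the change of variable $v=\ln w$ is legitimate. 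The chief technical obstacle is the non-standard flux boundary condition on $\phi$, which couples $\phi$ to $\nabla\psi$ and so is not of Lopatinski\u{\i} type with fixed coefficients. The clean fix is to rewrite it as $(\nabla\phi-\chi\phi\nabla V)\cdot\nu = \chi(U+\phi)\nabla\psi\cdot\nu$, regarding the left-hand side as a standard Robin-type operator with smooth coefficient $\chi\nabla V$ and the right-hand side as a perturbative boundary source controlled in trace norms by the $H^2$-smallness of the inputs; carrying out the corresponding tangential–normal decomposition and commutator bookkeeping near $\partial B_R$ to obtain the $L^2(0,T;H^3)$ estimate is what the authors describe as ``routine and tedious.''
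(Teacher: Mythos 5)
Your proposal takes essentially the same route as the paper: the paper proves Proposition \ref{local} only by invoking the contraction mapping theorem and explicitly omits the ``routine and tedious'' details, and your scheme (perturbation system for $(\phi,\psi)=(u-U,v-V)$, sequential linear parabolic solves on a ball in $C([0,T];H^2)\cap L^2(0,T;H^3)$, self-map by smallness of $T(\Xi)$, contraction in the weaker norm $C([0,T];L^2)\cap L^2(0,T;H^1)$, then positivity of $u$ by continuity) is exactly the standard argument being invoked, and your derivation of the perturbed equations and of the coupled flux boundary condition is correct.

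Two points need repair, though neither changes the structure of the argument. First, the embedding $H^2(B_R)\hookrightarrow W^{1,\infty}(B_R)$ you cite is false for $n=2,3$ (it requires $2-\tfrac n2>1$, i.e. $n=1$); for the quadratic terms you should instead use $H^2\hookrightarrow L^\infty\cap W^{1,4}$ (or $W^{1,6}$ when $n=3$), e.g. $\||\nabla\bar\psi|^2\|_{L^2}\le\|\nabla\bar\psi\|_{L^4}^2\le C\|\bar\psi\|_{H^2}^2$, and for the $H^1$-norm of the forcing (needed for the $L^2(0,T;H^3)$ bound) estimate $\|\nabla\bar\psi\cdot D^2\bar\psi\|_{L^2}\le C\|\bar\psi\|_{H^3}\|\bar\psi\|_{H^2}$, which is integrable in time on $\mathcal B_T$ precisely because the $L^2_tH^3$ component is part of the norm; this is the same mechanism the paper uses in its \emph{a priori} estimates (cf.\ the treatment of $\||\nabla\psi|^2\|_{H^1}$ in Lemma \ref{H2-psi}). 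Second, to land on the stated constant $4$ you cannot quote an abstract maximal-regularity inequality with an unspecified constant $C_0$: the self-map bound should be produced by energy estimates (testing with the solution and its derivatives, or with $e^{-Kt}$ weights) whose leading coefficient is of the form $e^{CT}\to1$ as $T\to0$, so that for small $T(\Xi)$ one gets $\|\mathcal T(\bar\phi,\bar\psi)\|_{X_T}^2\le 2\|(u_0-U,v_0-V)\|_{H^2}^2+ C T^{\theta}(1+\Xi^2)\,\Xi^2\le 4\|(u_0-U,v_0-V)\|_{H^2}^2$ only after this normalization is in place (note also that the ball should be centered on data of size $\|(u_0-U,v_0-V)\|_{H^2}$, not $\Xi$, if the final inequality is to hold with the initial data on the right). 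With these adjustments your sketch is a faithful filling-in of the proof the paper omits.
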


Noting that the system \eqref{trans} is rotationally invariant, by the uniqueness of solutions, one can see that if the initial data $(u_0,v_0)(x)$ is radially symmetric, then the solution $(u,v)(x,t)$ is also radially symmetric. In the radial setting, we have the following stability result for the reformulated system \eqref{trans}.
\begin{theorem}\label{v-stability} Let  $n=2,3$.
Assume that the initial datum $(u_0,v_0)$ is radially symmetric, and  that $0<u_0\in H^2(B_R)$, $v_0-\ln b\in H_0^1(B_R)\cap H^2(B_R)$. Let $(U,V)$ be the unique positive solution of \eqref{3.2}.
Then there exists a constant $\delta_2>0$ such that if the initial datum satisfies
$\|(u_0-U,v_0-V)\|_{H^2}\leq\delta_2,$
then  the system \eqref{trans} admits a unique global radial solution $(u,v)\in\C([0,+\infty);H^2(B_R))$, which satisfies  $u(x,t)>0$ on $B_R\times(0,+\infty)$, and
\begin{equation}\label{v-convergence}
\|(u-U,v-V)(\cdot,t)\|_{H^2}^2\leq C\|(u_0-U,v_0-V)\|_{H^2}^2e^{-\mu t},\end{equation}
where $C$ and $\mu$ are positive constants independent of $t$.
\end{theorem}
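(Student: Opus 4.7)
The strategy is to combine the local existence result of Proposition \ref{local} with a uniform-in-time \emph{a priori} $H^2$ bound on the perturbation, derived via time-weighted energy estimates after reformulating the first equation through the radial anti-derivative. A standard continuation argument will then yield both global existence and the exponential decay \eqref{v-convergence}.

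\textbf{Reformulation.} Set $\phi = u - U$ and $\psi = v - V$. Exploiting radial symmetry, introduce the mass distribution functions $M(r,t) = \omega_n \int_0^r s^{n-1} u(s,t)\, ds$ and $\bar M(r) = \omega_n \int_0^r s^{n-1} U(s)\, ds$, and let $N = M - \bar M$. Conservation of mass gives $N(R,t) = 0$, while $N(0,t) = 0$ is automatic, so $N$ enjoys homogeneous Dirichlet data. Integrating the first equation of \eqref{trans} over $B_r$ and using the no-flux identity (together with the analogous one for the steady state) yields the scalar second-order evolution
\[
N_t = N_{rr} - \frac{n-1}{r} N_r - p N_r(V_r + \psi_r) - p \bar M_r \psi_r, \qquad N|_{r=0,R} = 0,
\]
together with $\phi = N_r/(\omega_n r^{n-1})$. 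Meanwhile $\psi$ satisfies
\[
\psi_t = \varepsilon \Delta \psi + \varepsilon \nabla(v+V)\cdot\nabla\psi - \phi, \qquad \psi|_{\partial B_R} = 0.
\]
The crucial gain is that both unknowns now carry Dirichlet boundary conditions, so Poincaré's inequality is available and standard energy methods become applicable.

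\textbf{Time-weighted energy estimates.} Testing the $N$-equation against $N/r^{n-1}$ (which is consistent with $\|\phi\|_{L^2}^2 = \omega_n^{-1}\int_0^R N_r^2 r^{1-n}\,dr$) and the $\psi$-equation against $\psi$, integrating by parts, and exploiting the cancellation between the coupling term $-p\bar M_r \psi_r$ and the source $-\phi$, I would derive a basic identity of the form
\[
\frac{d}{dt} E_0(t) + \eta\, \mathcal{D}_0(t) \le C\bigl(\|(\phi,\psi)\|_{H^2}\bigr)\, \mathcal{D}_0(t),
\]
where $E_0 \sim \|(N,\psi)\|_{L^2}^2$ and $\mathcal{D}_0 \sim \|\phi\|_{L^2}^2 + \varepsilon \|\nabla\psi\|_{L^2}^2$. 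Differentiating the equations in $r$ (and in $t$ where useful) and repeating the procedure at the $H^1$ and $H^2$ levels produces analogous controls, with the nonlinearities $p N_r \psi_r$ and $\varepsilon \nabla(v+V)\cdot\nabla\psi$ estimated through the Sobolev embeddings $H^2\hookrightarrow L^\infty$ and $H^1\hookrightarrow L^4$, which hold precisely for $n\le 3$ (this is where the dimension restriction enters). Summing the estimates with suitable coefficients, multiplying by $e^{\mu t}$ for a sufficiently small $\mu>0$, and invoking the smallness $\|(u_0-U,v_0-V)\|_{H^2}\le\delta_2$ closes a Gronwall-type bootstrap yielding \eqref{v-convergence}. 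A routine continuation argument then extends the local solution to $[0,\infty)$.

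\textbf{Main obstacle.} The principal difficulty is the size of the steady-state drifts: inside the boundary layer $V_r$ and $\bar M_r = \omega_n r^{n-1} U$ are large (of order $\varepsilon^{-1}$ as $\varepsilon\to 0$), so the cross-terms $p N_r V_r$ and $p \bar M_r \psi_r$ are borderline and cannot be absorbed by pure smallness. Taming them requires the monotonicity $U, V_r > 0$ established in Lemma \ref{Thm-steady} together with the sign property $(r^{n-1} V_r)_r > 0$ of Lemma \ref{lemma2.6}, which convert these borderline quadratic terms into sign-definite quantities after integration by parts and thereby restore the coercivity of the energy functional. Tracking this cancellation cleanly up to the $H^2$ level, while simultaneously ensuring that the interplay between $-p\bar M_r\psi_r$ in the $N$-equation and the source $-\phi$ in the $\psi$-equation yields definite dissipation rather than an unsigned error, is the technical heart of the argument.
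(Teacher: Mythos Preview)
Your overall architecture---local existence plus uniform a priori $H^2$ bounds via time-weighted energy estimates on the anti-derivative, then continuation---matches the paper, and you have correctly located the main obstacle: the steady drift $V_r$ is of order $\varepsilon^{-1}$ near the boundary, so the term $pN_rV_r$ in your $N$-equation cannot be absorbed by smallness of the perturbation. However, the mechanism you propose for taming it is not the one that works. The sign property $(r^{n-1}V_r)_r>0$ of Lemma~\ref{lemma2.6} is used in the paper only for the $2\varepsilon V_r\psi_r$ term in the $\psi$-equation: after testing against $\psi$ and integrating by parts it becomes $\chi\varepsilon\int(r^{n-1}V_r)_r\psi^2\ge 0$, which sits on the dissipation side. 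It does \emph{not} handle $pN_rV_r$: testing your $N$-equation against $N/r^{n-1}$ and integrating by parts yields $\tfrac{p}{2}\int N^2\,(V_r/r^{n-1})_r$, and $(V_r/r^{n-1})_r$ has no useful sign (and is large).

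The missing ingredient is a weight $1/U$ on the test function for the anti-derivative equation. In the paper's variables (their $\phi$ is your $N/(\omega_n r^{n-1})$) one tests against $r^{n-1}\phi/U$ and simultaneously tests the $\psi$-equation against $\chi r^{n-1}\psi$. The drift then produces $\int(r^{n-1}\phi)_r\,\phi\,\bigl[(1/U)_r+\chi V_r/U\bigr]$, and the steady-state relation $U_r=\chi U V_r$ makes the bracket \emph{identically zero}. The same weight also makes the coupling cancel exactly: $-\chi U\psi_r\cdot r^{n-1}\phi/U$ combines with $-\chi(r^{n-1}\phi)_r\psi$ into a total derivative, whereas with your unweighted pairing the two coupling contributions differ by a factor of $U$ and do not cancel. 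Once this relative-entropy weight is in place at the $L^2$ level, the higher-order estimates proceed by more routine arguments (no further exact cancellations are needed), and the rest of your outline---Sobolev embeddings for $n\le 3$, the time weight $e^{\mu t}$, and continuation---goes through as in the paper's Lemmas~\ref{L2}--\ref{H2-u}.
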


To show the global existence result claimed in Theorem \ref{v-stability}, by the local well-posedness result and the standard continuation argument, it suffices to establish the corresponding \emph{a priori} estimates of $(u-U,v-V)$. In order to achieve the proofs of Theorems \ref{v-stability} and \ref{Thm-stability} at the end of this section, we introduce the notation
\begin{equation}\label{NT}
N(t):=\sup_{\tau\in[0,t]}\big\{\|(u-U)(\cdot,\tau)\|_{H^2}+\|(v-V)(\cdot,\tau)\|_{H^2}\big\}.
\end{equation}
By the Sobolev embedding theorem, for $n=2,3$, we have
\begin{equation}\label{sobolev}
\sup_{\tau\in[0,t]}\big\{\|(u-U)(\cdot,\tau)\|_{L^\infty}+\|(v-V)(\cdot,\tau)\|_{L^\infty}\big\}\leq CN(t).
\end{equation}
Then the following \emph{a priori} estimates, proven below in a sequence of iterated lemmas, can be established.

\begin{proposition}[\emph{A priori} estimate]\label{aprioriestimate} Let  $n=2,3$. Assume that $(u,v)(x,t)$ is a radially symmetric solution of the system \eqref{trans} obtained in Proposition \ref{local} on $[0,T]$ for some $T>0$. Then there exists a constant $h>0$ independent of $T$ such that if $N(T)\leq h$,  it holds that
\begin{equation}\label{eqn3.4}
e^{\mu t}\|(u-U,v-V)(\cdot,t)\|_{H^2}^2+\int_0^te^{\mu \tau}\|(u-U,v-V)(\cdot,\tau)\|_{H^3}^2d\tau\leq C\|(u_0-U,v_0-V)\|_{H^2}^2
\end{equation}
for any $t\in[0,T]$, where $C$ and $\mu$ are positive constants independent of $t$ and $h$.
\end{proposition}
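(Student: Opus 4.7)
The plan is to work with the perturbation $(\phi,\psi) := (u-U,\,v-V)$, which, subtracting \eqref{3.2} from \eqref{trans}, satisfies
\begin{equation*}
\begin{cases}
\phi_t = \Delta\phi - \chi\nabla\!\cdot\!(\phi\nabla V) - \chi\nabla\!\cdot\!(U\nabla\psi) - \chi\nabla\!\cdot\!(\phi\nabla\psi), \\[1mm]
\psi_t = \varepsilon\Delta\psi + 2\varepsilon\nabla V\!\cdot\!\nabla\psi + \varepsilon|\nabla\psi|^2 - \phi,
\end{cases}
\end{equation*}
together with $\psi|_{\partial B_R} = 0$ and the zero-flux condition inherited from \eqref{trans}. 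As flagged in the authors' overview, the zero-flux condition is too weak to control boundary terms in the $\phi$-energy identity, so in the radial setting I would reduce the order of the first equation by introducing the relative radial mass distribution
\begin{equation*}
\Phi(r,t) := \omega_n\!\int_0^r s^{n-1}\phi(s,t)\,ds,
\end{equation*}
which automatically satisfies $\Phi(0,t)=0$ and also $\Phi(R,t)\equiv 0$ because $\int_{B_R}\phi\,dx$ is a conserved quantity by the choice $m=\int_{B_R}u_0\,dx$. Thus $\Phi$ solves a Dirichlet problem and the hostile boundary integrals disappear.

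Integrating $r^{n-1}\phi_t$ from $0$ to $r$ and using the steady-state identity $U_r = \chi U V_r$ (consequence of the first equation of \eqref{3.2}), I obtain a scalar parabolic equation of the schematic form
\begin{equation*}
\Phi_t = \Phi_{rr} - \tfrac{n-1}{r}\Phi_r - \chi V_r\,\Phi_r - \chi\omega_n r^{n-1}U\,\psi_r + \mathcal{N}_1[\Phi_r,\psi_r],
\end{equation*}
where $\mathcal{N}_1$ is genuinely quadratic in the perturbation. Coupled with the $\psi$-equation above, both unknowns now carry homogeneous Dirichlet data at $r=R$, so radial integration by parts against the weight $r^{n-1}\,dr$ proceeds freely. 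The next step is a time-weighted $L^2$ estimate: I test the $\Phi$-equation against $e^{\mu t}\Phi/(r^{n-1}U)$ and the $\psi$-equation against $e^{\mu t}\chi\omega_n\psi$, with $\mu>0$ to be chosen small. The cross-coupling term $-\chi\omega_n r^{n-1}U\psi_r$ is designed so that, after one integration by parts, it cancels against the $-\phi$ forcing in the $\psi$-equation, leaving a symmetric bilinear form whose positivity is precisely what Lemma \ref{lemma2.6} delivers through $(r^{n-1}V_r)_r>0$ (equivalently $U>\varepsilon V_r^2$). The nonlinear remainders $\mathcal{N}_1$, $2\varepsilon\nabla V\!\cdot\!\nabla\psi$ and $\varepsilon|\nabla\psi|^2$ are absorbed using the Sobolev embedding $H^2(B_R)\hookrightarrow L^\infty(B_R)$ for $n\le 3$ and the smallness $N(T)\le h$.

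Higher-derivative estimates for $\|\nabla(\phi,\psi)\|_{L^2}$ and $\|\nabla^2(\phi,\psi)\|_{L^2}$ would be obtained by differentiating the $(\Phi,\psi)$-system spatially and repeating the weighted testing procedure, using elliptic regularity on the ball (which lets one replace $H^k$ norms with norms of powers of $-\Delta$ compatible with the Dirichlet data, so derivatives commute with the boundary conditions). Summing the base- and higher-order inequalities yields a differential inequality of the form
\begin{equation*}
\tfrac{d}{dt}\bigl(e^{\mu t}\|(\phi,\psi)\|_{H^2}^2\bigr) + c\,e^{\mu t}\|(\phi,\psi)\|_{H^3}^2 \le 0
\end{equation*}
for $\mu,h$ sufficiently small, and integrating in $t$ gives \eqref{eqn3.4}. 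The main obstacle I anticipate is engineering the multipliers so that the first-order cross term $-\chi\omega_n r^{n-1}U\psi_r$ is turned into a manifestly coercive quadratic form instead of a sign-indefinite contribution; both the vanishing $\Phi(R,t)=0$ (to kill boundary terms) and Lemma \ref{lemma2.6} (to guarantee positivity of the reduced bilinear form) are indispensable here. A secondary, technical difficulty is ensuring that the weights $1/(r^{n-1}U)$ and $1/r^{n-1}$ remain integrable near $r=0$, which requires exploiting the radial behavior $\Phi(r,t)=O(r^n)$ and $\psi_r(r,t)=O(r)$ at the origin inherent to $H^2$ radial functions, together with the strict positivity of $U$ on $\overline{B_R}$ established in Theorem \ref{th1.1}.
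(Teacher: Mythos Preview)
Your base-level $L^2$ estimate is essentially the paper's Lemma~\ref{L2}: modulo normalization (your $\Phi$ equals $\omega_n r^{n-1}$ times the paper's antiderivative variable), the multipliers, the cancellation of the cross terms, and the use of Lemma~\ref{lemma2.6} for the sign of $\int(r^{n-1}V_r)_r\psi^2$ all match.

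The gap is in the higher-order estimates. Your plan to ``differentiate the $(\Phi,\psi)$-system spatially'' and invoke ``powers of $-\Delta$ compatible with the Dirichlet data'' runs into two concrete obstructions on the $\Phi$ side. First, the principal part of your $\Phi$-equation is $\Phi_{rr}-\tfrac{n-1}{r}\Phi_r$, \emph{not} the radial Laplacian $\Phi_{rr}+\tfrac{n-1}{r}\Phi_r$; testing against $\Delta\Phi$ on $B_R$ produces $\int_0^R r^{n-1}\bigl(\Phi_{rr}^2-\tfrac{(n-1)^2}{r^2}\Phi_r^2\bigr)\,dr$, which is sign-indefinite and gives no coercivity. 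Second, spatial derivatives of $\Phi$ do not inherit the Dirichlet condition: $\Phi_r(R,t)=\omega_n R^{n-1}(u-U)(R,t)$ is generically nonzero, so iterating the multiplier argument at the next order reintroduces exactly the boundary terms that forced you to pass to $\Phi$ in the first place.

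The paper avoids both issues by differentiating in \emph{time}, which preserves all boundary conditions: the antiderivative $\phi_t$ still vanishes at $r=0,R$, and the flux condition $(j-J)_t\cdot\nu=0$ persists on $\partial B_R$. This yields the $H^1$ level (Lemmas~\ref{L2-u}--\ref{H1-u} and~\ref{H2-psi}). For the top-order $H^2$ estimate on $u-U$ (Lemma~\ref{H2-u}) a further device is essential: the substitution $h:=ue^{-\chi v}$ converts the nonlinear zero-flux condition into a clean homogeneous Neumann condition $\partial_\nu h=0$, after which standard elliptic regularity applies to $h-H$ and transfers back to $u-U$. Neither time-differentiation nor the $h$-substitution appears in your sketch, and without them the passage from the $L^2$ estimate to \eqref{eqn3.4} is not closed.
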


We now write the system \eqref{trans} in the radial coordinates as
\begin{eqnarray}\label{uv}
\left\{
\begin{array}{lll}
u_{t}=\frac{1}{r^{n-1}}[(r^{n-1}u_r)_r-\chi(r^{n-1}uv_r)_r],& r\in (0,R),\ t>0, \\[1mm]
v_{t}=\frac{\varepsilon}{r^{n-1}}(r^{n-1}v_r)_r+\varepsilon|v_r|^2-u,& r\in (0,R),\ t>0, \\[1mm]
u(r,0)=u_0(r),\ v(r,0)=\ln w_0(r), & r\in (0,R), \\[1mm]
u_r-\chi uv_r=0, \ v=\ln b, & r=R,\ t>0, \\[1mm]
u_r=0,\ v_r=0, \ & r=0,\ t>0.
\end{array}
\right.
\end{eqnarray}
Using the boundary condition of $U$, one can see that the steady state $(U,V)$ of \eqref{uv} satisfies
\begin{eqnarray}\label{uv-steady}
\left\{
\begin{array}{lll}
U_r=\chi UV_r,& r\in (0,R),\\[1mm]
\frac{\varepsilon}{r^{n-1}}(r^{n-1}V_r)_r+\varepsilon|V_r|^2-U=0,& r\in (0,R), \\[1mm]
 U_r(0)=0=V_r(0), \ V(R)=\ln b.
\end{array}
\right.
\end{eqnarray}
Thanks to the conservation of mass, it holds that $\int_{B_R}u(x,t)dx=\int_{B_R}u_0(x)dx$. Since $$\int_{B_R}u_0(x)dx=\int_{B_R}U(x)dx,$$ we have
$$
\int_{B_R}u(x,t)dx=\int_{B_R}U(x)dx \text{ for all } t\in[0,T],
$$
that is
$$
\omega_n\int_0^R u(r,t) r^{n-1}dr=\omega_n\int_0^R U(r) r^{n-1}dr,
$$
where we have used the fact $\int_{B_R} f(|x|,t)dx=\omega_n \int_0^R f(r,t)r^{n-1}dr$ with $\omega_n$ denoting the surface area of the unit sphere in $\R^n$.
This inspires us to take the relative difference between the radial mass distribution function of any solution $u(r,t)$ to the unique steady state $U(r)$ or \lq\lq radius-dependent anti-derivative\rq\rq to reformulate the problem. Precisely, if we set
\begin{equation}\label{anti}
\phi(r,t):=\frac{1}{r^{n-1}}\int_0^r(u(s,t)-U(s))s^{n-1}ds,\ \ \psi(r,t):= v(r,t)-V(r).
\end{equation}
Then by L'H\^{o}pital's rule, we have
\[\phi(0,t)=\underset{r\rightarrow0}{\lim}\frac{1}{r^{n-1}}\int_0^r(u(s,t)-U(s))s^{n-1}ds
=\underset{r\rightarrow0}{\lim}\frac{(u(r,t)-U(r))\cdot r}{n-1}=0,\]
and
\[\phi(R,t)=\int_0^R(u(s,t)-U(s))s^{n-1}ds=0.\]
Substituting \eqref{anti} into \eqref{uv}, one can find that $(\phi,\psi)$ satisfies
\begin{equation}\label{perturbation}
\begin{cases}
\phi_{t}=\left(\frac{(r^{n-1}\phi)_r}{r^{n-1}}\right)_r
-\chi V_r\frac{(r^{n-1}\phi)_r}{r^{n-1}}-\chi U\psi_{r}-\chi\frac{(r^{n-1}\phi)_r}{r^{n-1}}\psi_{r},& r\in(0,R),t>0,\\[1mm]
\psi_{t}=\frac{\varepsilon}{r^{n-1}}(r^{n-1}\psi_r)_r+2\varepsilon V_r\psi_{r}+\varepsilon\psi_{r}^{2}-\frac{(r^{n-1}\phi)_r}{r^{n-1}},& r\in(0,R),t>0,\\[1mm]
(\phi,\psi_r)(0,t)=(0,0), \ (\phi,\psi)(R,t)=(0,0), & t>0,
\end{cases}
\end{equation} with initial data
\begin{equation}\label{perturbation-initial}
(\phi_0,\psi_0)(r):=\left(\frac{1}{r^{n-1}}\int_0^r(u_0(s)-U(s))s^{n-1}ds,\ln w_0(r)-V(r)\right).
\end{equation}

We next establish the \emph{a priori} estimate \eqref{eqn3.4}. We begin with the basic $L^2$ estimate. In what follows, we shall abbreviate $\int_0^Rf(r)dr$ as $\int_0^R f(r)$, $\int_0^t \int_0^Rf(r,s)drds$ as $\int_0^t \int_0^Rf(r,s)$, and $B_R(0)$ as $B_R$ for the sake of notational simplicity.

\begin{lemma}\label{L2} Let the assumptions of Proposition \ref{aprioriestimate} hold.
If $N(T)\ll1$, then there exist two constants $C>0$ and $\mu>0$ independent of $t$ such that
\begin{equation}\label{eqn3.10}
\begin{split}
&e^{\mu t}(\|\phi(\cdot,t)\|_{L^2}^2+\|\psi(\cdot,t)\|_{L^2}^2)
+\int_0^te^{\mu \tau}(\|u(\cdot,\tau)-U(\cdot)\|_{L^2}^2
+\|\psi(\cdot,\tau)\|_{H^1}^2)d\tau\\& \leq C(\|\phi_0\|_{L^2}^2+\|\psi_0\|_{L^2}^2)
\end{split}\end{equation}
holds for any $t\in[0,T]$.
\end{lemma}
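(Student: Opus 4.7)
My plan is to derive a dissipative energy inequality for $(\phi,\psi)$ in the weighted space $L^2_w$ with weight $r^{n-1}$ (whose norm is equivalent to the usual $L^2(B_R)$-norm via $\|f\|_{L^2(B_R)}^2=\omega_n\int_0^R f^2 r^{n-1}\,dr$ for radial $f$), then upgrade to a time-weighted inequality by multiplying by $e^{\mu t}$. The decisive structural identity is that, by the very definition of $\phi$, one has $\frac{(r^{n-1}\phi)_r}{r^{n-1}}=u-U$; hence testing the first equation of \eqref{perturbation} against $Kr^{n-1}\phi$ and integrating by parts using $\phi(0,t)=\phi(R,t)=0$ converts the leading term into the dissipation $-K\|u-U\|_{L^2_w}^2$. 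Expanding this square as $\int_0^R\bigl(\phi_r+\tfrac{n-1}{r}\phi\bigr)^2 r^{n-1}\,dr$ and integrating the cross term by parts (boundary contributions vanish) yields $\|u-U\|_{L^2_w}^2=\|\phi_r\|_{L^2_w}^2+(n-1)\int_0^R\phi^2 r^{n-3}\,dr$. Combined with the Poincar\'e inequality (applicable because $\phi(R,t)=0$), this gives $\|\phi\|_{L^2_w}^2\lesssim\|u-U\|_{L^2_w}^2$, the mechanism allowing zero-order terms in $\phi$ to be absorbed by dissipation.

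\textbf{Key steps.} First I multiply the first equation of \eqref{perturbation} by $Kr^{n-1}\phi$ (with $K>0$ large, to be chosen) and the second by $r^{n-1}\psi$, integrate on $(0,R)$, and integrate by parts; every boundary term vanishes by $(\phi,\psi_r)(0,t)=(\phi,\psi)(R,t)=(0,0)$. This produces
\[
\tfrac{1}{2}\tfrac{d}{dt}\bigl(K\|\phi\|_{L^2_w}^2+\|\psi\|_{L^2_w}^2\bigr)+K\|u-U\|_{L^2_w}^2+\varepsilon\|\psi_r\|_{L^2_w}^2=\sum_{j} I_j,
\]
where the $I_j$ collect the linear couplings $-K\chi\!\int\! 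V_r(u-U)\phi\,r^{n-1}dr$, $-K\chi\!\int\! U\psi_r\phi\,r^{n-1}dr$, $2\varepsilon\!\int\! V_r\psi_r\psi\,r^{n-1}dr$, $-\!\int\!(u-U)\psi\,r^{n-1}dr$, and the cubic contributions $-K\chi\!\int\!(u-U)\psi_r\phi\,r^{n-1}dr$ and $\varepsilon\!\int\!\psi_r^2\psi\,r^{n-1}dr$. Second, I bound the linear couplings via Young's inequality, exploiting that $\|U\|_{L^\infty},\|V_r\|_{L^\infty}<\infty$ at fixed $\varepsilon$ (Theorem \ref{th-existence}), and applying Poincar\'e on $\psi$ (since $\psi(R)=0$) to pass any stray $\|\psi\|_{L^2_w}^2$ into $\|\psi_r\|_{L^2_w}^2$. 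Third, for the cubic terms I use the Sobolev embedding $H^2(B_R)\hookrightarrow L^\infty(B_R)$ (valid for $n\le 3$) together with $N(T)\ll 1$ to obtain $\|u-U\|_{L^\infty},\|\psi\|_{L^\infty},\|\phi\|_{L^\infty}\le CN(T)$, so these terms are of order $CN(T)\bigl(\|u-U\|_{L^2_w}^2+\|\psi_r\|_{L^2_w}^2\bigr)$. Choosing $K$ large (to dominate the Young constants multiplying $\|\phi\|_{L^2_w}^2$, which are then reabsorbed into $K\|u-U\|_{L^2_w}^2$ via the Poincar\'e identity above) and $N(T)$ small, I arrive at
\[
\tfrac{d}{dt}\bigl(K\|\phi\|_{L^2_w}^2+\|\psi\|_{L^2_w}^2\bigr)+c\bigl(\|u-U\|_{L^2_w}^2+\|\psi\|_{H^1_w}^2\bigr)\le 0
\]
for some $c>0$ independent of $T$.

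\textbf{Time weight and conclusion.} The Poincar\'e bounds $\|\phi\|_{L^2_w}^2\le C\|u-U\|_{L^2_w}^2$ and $\|\psi\|_{L^2_w}^2\le C\|\psi_r\|_{L^2_w}^2$ let me pick $\mu>0$ so small that $\mu(K\|\phi\|_{L^2_w}^2+\|\psi\|_{L^2_w}^2)\le\tfrac{c}{2}\bigl(\|u-U\|_{L^2_w}^2+\|\psi\|_{H^1_w}^2\bigr)$. Then
\[
\tfrac{d}{dt}\!\bigl[e^{\mu t}(K\|\phi\|_{L^2_w}^2+\|\psi\|_{L^2_w}^2)\bigr]+\tfrac{c}{2}\,e^{\mu t}\bigl(\|u-U\|_{L^2_w}^2+\|\psi\|_{H^1_w}^2\bigr)\le 0,
\]
and integration from $0$ to $t$ delivers exactly \eqref{eqn3.10} after re-expressing weighted norms as standard $L^2(B_R)$-norms.

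\textbf{Main obstacle.} The most delicate point is absorbing the interaction $-K\chi\!\int\! U\psi_r\phi\,r^{n-1}dr$: because the steady state $U$ has a boundary-layer spike whose $L^\infty$-norm is controlled by the fixed-but-possibly-small $\varepsilon$ (Theorem \ref{th-existence}), Young's inequality produces a large multiplier on $\|\phi\|_{L^2_w}^2$, and one must take $K$ large relative to $\|U\|_{L^\infty}^2$ and the Poincar\'e constant so that it is still dominated by $K\|u-U\|_{L^2_w}^2$. A secondary subtlety is the integrability of $\int_0^R\phi^2 r^{n-3}\,dr$ near $r=0$ for $n=2,3$, which is secured precisely by $\phi(0,t)=0$ coming from the definition \eqref{anti}.
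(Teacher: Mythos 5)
Your overall architecture (energy identity in the $r^{n-1}$-weighted space, Hardy/Poincar\'e control of $\phi$ by $u-U$, then multiplication by $e^{\mu t}$) matches the paper, but the way you treat the \emph{linear} coupling terms does not close, and this is the heart of the lemma. With the multipliers $Kr^{n-1}\phi$ and $r^{n-1}\psi$, the term $-K\chi\int_0^R V_r\,(u-U)\,\phi\, r^{n-1}dr$ can only be absorbed by the dissipation $K\|u-U\|_{L^2_w}^2$ after using $\|\phi\|_{L^2_w}\le C_P^{1/2}\|u-U\|_{L^2_w}$, which forces the $K$-independent condition $\chi\|V_r\|_{L^\infty}C_P^{1/2}<1$; choosing $K$ large or $\mu$ small does nothing here, and this condition is false in general since $V_r=W_r/W$ is of order $\e^{-1}$ near the boundary (Theorem \ref{radial}) and is in any case not small. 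The term $-K\chi\int_0^R U\psi_r\phi\,r^{n-1}dr$ is worse: after Young against the only available $\psi$-dissipation $\varepsilon\|\psi_r\|_{L^2_w}^2$ it produces $K^2\chi^2\|U\|_{L^\infty}^2\varepsilon^{-1}\|\phi\|_{L^2_w}^2$, which is dominated by $K\|u-U\|_{L^2_w}^2$ only if $K\lesssim \varepsilon/(\chi^2\|U\|_{L^\infty}^2C_P)$, i.e.\ $K$ must be \emph{small}; meanwhile the cross term $-\int_0^R(u-U)\psi\,r^{n-1}dr$ from the $\psi$-equation forces $K\gtrsim \varepsilon^{-1}$. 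These requirements are incompatible (recall $\|U\|_{L^\infty}\sim\e^{-1}$ by \eqref{3} and Lemma \ref{le3.thick}), so your claim that ``taking $K$ large relative to $\|U\|_{L^\infty}^2$'' resolves the main obstacle is exactly backwards, and no choice of $K$ rescues the $V_r$-coupling. You also bound $2\varepsilon\int V_r\psi_r\psi\,r^{n-1}dr$ by $\|V_r\|_{L^\infty}$ and Poincar\'e, which again needs a smallness of $\|V_r\|_{L^\infty}$ that is unavailable.

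The paper closes the estimate by a structural cancellation rather than absorption: it tests the first equation of \eqref{perturbation} by $\frac{r^{n-1}\phi}{U}$ and the second by $\chi r^{n-1}\psi$. Because of the steady-state identity $U_r=\chi UV_r$ (first equation of \eqref{uv-steady}), one has $\left(\frac{1}{U}\right)_r+\frac{\chi V_r}{U}=0$, so the dangerous $V_r$-coupling vanishes identically in \eqref{3.4}; with these weights the $-\chi U\psi_r$ term and the $-\frac{(r^{n-1}\phi)_r}{r^{n-1}}$ term cancel each other exactly after integration by parts; and the remaining linear term $\chi\varepsilon\int_0^R(r^{n-1}V_r)_r\psi^2$ has a favorable sign by Lemma \ref{lemma2.6}. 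Only cubic terms survive, and these are the ones controlled by $N(T)\ll1$. To repair your argument you would need to adopt these $U$- and $\chi$-weighted multipliers (or an equivalent symmetrization) and invoke Lemma \ref{lemma2.6}; as written, the proposal has a genuine gap at the absorption step.
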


\begin{proof}
Integrating the sum of the first equation of $\eqref{perturbation}$ multiplied by $\frac{r^{n-1}\phi}{U}$ and the second one multiplied by $\chi r^{n-1}\psi$, we have alongside the integration by parts
\begin{equation}\label{3.4}
\begin{split}&\frac{1}{2}\frac{d}{dt}\int_0^R \left(\frac{r^{n-1}\phi^{2}}{U}+\chi r^{n-1}\psi^{2}\right)+\int_0^R \frac{|(r^{n-1}\phi)_r|^{2}}{r^{n-1}U}+\chi\varepsilon\int_0^R r^{n-1}\psi_{r}^{2}+\chi\varepsilon\int_0^R (r^{n-1}V_r)_r\psi^2\\
&\quad+\int_0^R(r^{n-1}\phi)_r\phi \left[\left(\frac{1}{U}\right)_r+\frac{\chi V_r}{U}\right]=-\chi\int_0^R\frac{\phi (r^{n-1}\phi)_r\psi_r}{U}+\chi\varepsilon\int_0^Rr^{n-1}\psi\psi_r^2.
\end{split}\end{equation}
A simple calculation from the first equation of \eqref{uv-steady} yields
\[\left(\frac{1}{U}\right)_r+\frac{\chi V_r}{U}=0.\]
From Lemma \ref{lemma2.6}, we get $(r^{n-1}V_r)_r>0.$
The last two terms of \eqref{3.4} can be estimated as follows.
By H\"{o}lder's inequality, we can derive that
\[\begin{split}
\left|\int_0^R\frac{\phi (r^{n-1}\phi)_r\psi_r}{U}\right|&\leq\|\phi\|_{L^\infty}\int_0^R
\frac{|(r^{n-1}\phi)_r|^{2}}{r^{n-1}U}+\|\phi\|_{L^\infty}\int_0^R\frac{r^{n-1}\psi_{r}^{2}}{U}\\&
\leq CN(t)\int_0^R
\left(\frac{|(r^{n-1}\phi)_r|^{2}}{r^{n-1}U}+\frac{r^{n-1}\psi_{r}^{2}}{U}\right),\end{split}\]
and
\[\begin{split}
\left|\int_0^Rr^{n-1}\psi\psi_r^2\right|\leq\|\psi\|_{L^\infty}\int_0^Rr^{n-1}\psi_{r}^{2}
\leq CN(t)\int_0^Rr^{n-1}\psi_{r}^{2},\end{split}\]
where we have used the fact based on \eqref{sobolev}
$$\|\phi(\cdot,t)\|_{L^\infty}\leq C\|u(\cdot,t)-U(\cdot)\|_{L^\infty}\leq CN(t),$$
and
$$\|\psi(\cdot,t)\|_{L^\infty}\leq CN(t).$$
Since $U$ is continuous and positive in $\overline{B_R}$, then $\max\limits_{x\in \overline{B_R}} U(x)=U_{\mathrm{max}}\geq U\geq U_{\mathrm{min}}=\min\limits_{x\in \overline{B_R}} U(x)>0$ and hence $\frac{r^{n-1}\psi_{r}^{2}}{U}\leq\frac{r^{n-1}\psi_{r}^{2}}{U_{\mathrm{min}}}$, $ \frac{|(r^{n-1}\phi)_r|^{2}}{r^{n-1}U}\geq  \frac{|(r^{n-1}\phi)_r|^{2}}{r^{n-1}U_{\mathrm{max}}}$.
Then substituting the above estimates into \eqref{3.4}, we obtain
\begin{equation}\label{3.9}
\begin{split}\frac{d}{dt}\int_0^R \left(\frac{r^{n-1}\phi^{2}}{U}+\chi r^{n-1}\psi^{2}\right)&+\frac{2}{U_{\mathrm{max}}}(1-pCN(t))\int_0^R \frac{|(r^{n-1}\phi)_r|^{2}}{r^{n-1}}\\
&+2\left[\chi\varepsilon-(p\e +p/U_{\mathrm{min}})CN(t)\right]\int_0^R r^{n-1}\psi_{r}^{2}\leq0.
\end{split}\end{equation}
Recalling the transformation \eqref{anti}, we get
\begin{equation}\label{3.12}
(r^{n-1}\phi)_r=(u-U)r^{n-1},
\end{equation}
from which we further have
\[\int_0^R \frac{|(r^{n-1}\phi)_r|^{2}}{r^{n-1}}=\int_0^Rr^{n-1}|u-U|^2dr
=\frac{1}{\omega_n}\int_{B_R}|u-U|^2dx.\]
Noticing that $\int_0^R r^{n-1}\psi_r^2dr=\frac{1}{\omega_n}\int_{B_R}|\nabla \psi|^2dx$, we have from \eqref{3.9} that
\begin{equation}\label{3.13}
\begin{split}\frac{d}{dt}\int_0^R \omega_n\left(\frac{r^{n-1}\phi^{2}}{U}+\chi r^{n-1}\psi^{2}\right)+\chi\varepsilon\int_{B_R}|\nabla\psi|^2dx+\frac{1}{U_{\mathrm{max}}}\int_{B_R}|u-U|^2dx\leq0,
\end{split}
\end{equation}
where we have used the assumption that $N(t)$ is sufficiently small such that, for instance, $1-pCN(t)>{1}/{2}$ and $\left[\chi\varepsilon-(p\e+p/U_{\mathrm{min}})CN(t)\right]>\chi\varepsilon/2$.
By  \eqref{anti}, we also have
\[\begin{split}
\int_{B_R}\phi^{2}dx=\omega_n\int_0^Rr^{n-1}\phi^{2}&=\omega_n\int_0^R\frac{1}{r^{n-1}}
\left(\int_0^r(u-U)s^{n-1}ds\right)^2\\&
\leq \omega_n\int_0^R(u-U)^2r^{n-1}dr\int_0^R\frac{1}{r^{n-1}}\int_0^rs^{n-1}dsdr\\
&=\frac{\omega_nR^2}{2n}\int_{B_R}(u-U)^2dx.
\end{split}\]
Noting that $\psi|_{\partial B_R}=0$ from the equations in \eqref{perturbation}, it follows from Poincar\'{e}'s inequality that
\[\omega_n\int_0^Rr^{n-1}\psi^{2}=\int_{B_R}\psi^2dx\leq C\int_{B_R}|\nabla\psi|^2dx.\]
Now, multiplying \eqref{3.13} by $e^{\mu t}$, where $\mu>0$ is a constant to be determined, we get
\[\begin{split}
&e^{\mu t}\left(\int_{B_R}\frac{\phi^{2}}{U}+\chi \int_{B_R}\psi^{2}\right)+\int_0^te^{\mu \tau}\left(\frac{1}{U_{\mathrm{max}}}\int_{B_R}(u-U)^2+\chi \varepsilon\int_{B_R}|\nabla\psi|^2\right)\\
&\leq \mu\int_0^te^{\mu \tau}\left(\int_{B_R}\frac{\phi^{2}}{U}+\chi \int_{B_R}\psi^{2}\right)+\int_{B_R}\frac{\phi_0^{2}}{U}+\chi \int_{B_R}\psi_0^{2}\\
&\leq \mu\int_0^te^{\mu \tau}\left(\frac{1}{U_{\mathrm{min}}}\int_{B_R}\phi^{2}+\chi \int_{B_R}\psi^{2}\right)+\frac{1}{U_{\mathrm{min}}}\int_{B_R}\phi_0^{2}+\chi \int_{B_R}\psi_0^{2}
\\&\leq C\mu\int_0^te^{\mu \tau}\left(\int_{B_R}(u-U)^2+\chi \int_{B_R}|\nabla\psi|^2\right)+C\int_{B_R}(\phi_0^{2}+\psi_0^{2}).\end{split}\]
Thus, the desired estimate \eqref{eqn3.10} holds by choosing $\mu>0$ to be suitably small.
\end{proof}

We next establish the $L^2$ estimate for $u-U$.
\begin{lemma}\label{L2-u} Let the assumptions of Proposition \ref{aprioriestimate} hold.
If $N(T)\ll1$, then it holds that
\begin{equation}\label{3.16}
\begin{split}
&e^{\mu t}\|u(\cdot,t)-U(\cdot)\|_{L^2}^2
+\int_0^te^{\mu \tau}\|\nabla(u(\cdot,\tau)-U(\cdot))\|_{L^2}^2\\&\leq C(\|u_0-U\|_{L^2}^2+\|\phi_0\|_{L^2}^2+\|\psi_0\|_{L^2}^2),\end{split}\end{equation}
for any $t\in[0,T]$, where $C>0$ is a constant  independent of $t$.

\end{lemma}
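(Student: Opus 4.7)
The plan is to work directly at the level of $f:=u-U$. Subtracting the first equation of \eqref{3.2} from the $u$-equation in \eqref{trans} and using the decomposition $u\nabla v - U\nabla V = f\nabla V + u\nabla\psi$, I obtain
\begin{equation*}
f_t = \Delta f - \chi\nabla\cdot(f\nabla V) - \chi\nabla\cdot(u\nabla\psi) \quad\text{in } B_R,
\end{equation*}
together with the boundary relation $\partial_\nu f = \chi(fV_r + u\psi_r)$ on $\partial B_R$, which follows by subtracting the zero-flux conditions for $u$ and $U$.

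I will then test this equation with $f$ in $L^2(B_R)$. The three resulting integrations by parts produce a cross boundary contribution $\chi u(R)f(R)\psi_r(R)|\partial B_R|$ arising from $\int_{B_R} f\Delta f$ that cancels exactly against the corresponding piece from $-\chi\int_{B_R} f\nabla\cdot(u\nabla\psi)$, leaving the energy identity
\begin{equation*}
\tfrac{1}{2}\tfrac{d}{dt}\|f\|_{L^2}^2 + \|\nabla f\|_{L^2}^2 = \tfrac{\chi}{2}V_r(R)\|f\|_{L^2(\partial B_R)}^2 - \tfrac{\chi}{2}\int_{B_R} f^2\Delta V + \chi\int_{B_R} u\,\nabla f\cdot\nabla\psi.
\end{equation*}
The middle term is absorbed into $C\|f\|_{L^2}^2$ by the $C^2(\overline{B_R})$ smoothness of $V$. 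The cross integral is dispatched by Young's inequality together with $\|u\|_{L^\infty}\leq \|U\|_{L^\infty}+\|f\|_{L^\infty}\leq C$ (valid under smallness of $N(T)$ via \eqref{sobolev}), yielding a contribution bounded by $\eta\|\nabla f\|_{L^2}^2 + C_\eta\|\nabla\psi\|_{L^2}^2$.

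The main obstacle is the surface term $\tfrac{\chi}{2}V_r(R)\|f\|_{L^2(\partial B_R)}^2$, whose coefficient is large (of order $\varepsilon^{-1}$ by Theorem \ref{radial}). I will dispatch it via the trace-interpolation inequality $\|f\|_{L^2(\partial B_R)}^2 \leq \eta\|\nabla f\|_{L^2}^2 + C_\eta\|f\|_{L^2}^2$, choosing $\eta$ sufficiently small (depending on $\chi V_r(R)$) so that the $\|\nabla f\|_{L^2}^2$ portion it generates is absorbed into the dissipation on the left-hand side. After this absorption, I obtain a differential inequality of the form
\begin{equation*}
\tfrac{d}{dt}\|f\|_{L^2}^2 + c_0\|\nabla f\|_{L^2}^2 \leq C\|f\|_{L^2}^2 + C\|\nabla\psi\|_{L^2}^2
\end{equation*}
with positive constants $c_0,C$ independent of $t$.

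To close the estimate, I multiply by $e^{\mu t}$ for a suitably small $\mu>0$, integrate over $[0,t]$, and invoke the already-established estimate \eqref{eqn3.10} of Lemma \ref{L2} to bound $\int_0^t e^{\mu\tau}\bigl(\|u-U\|_{L^2}^2 + \|\nabla\psi\|_{L^2}^2\bigr)d\tau$ by $C(\|\phi_0\|_{L^2}^2+\|\psi_0\|_{L^2}^2)$. Combined with the initial contribution $\|f_0\|_{L^2}^2 = \|u_0-U\|_{L^2}^2$, this yields the target estimate \eqref{3.16}.
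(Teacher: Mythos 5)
Your proposal is correct, but it follows a genuinely different route from the paper. The paper proves this lemma entirely through the mass--distribution variable $\phi$: it rewrites the first equation of \eqref{perturbation} as $\phi_t=(u-U)_r-\chi V_r(u-U)-\chi U\psi_r-\chi(u-U)\psi_r$ and tests it with $r^{n-1}(u-U)_r$, using the identity $\phi_t r^{n-1}(u-U)_r=(r^{n-1}(u-U)\phi_t)_r-\tfrac12(r^{n-1}(u-U)^2)_t$ so that, thanks to $\phi(0,t)=\phi(R,t)=0$, no boundary contributions survive at all; Young's inequality, the boundedness of $V_r$ and $U$, and the smallness of $N(t)$ then give exactly the differential inequality \eqref{eqn3.15}, which is closed with Lemma \ref{L2} as you do. You instead work directly with $f=u-U$ in the multi-dimensional form, and your energy identity (with the cancellation of the cross boundary terms $\chi\int_{\partial B_R}fu\psi_r$ and the surviving term $\tfrac{\chi}{2}V_r(R)\|f\|_{L^2(\partial B_R)}^2$) is correct; the trace interpolation $\|f\|_{L^2(\partial B_R)}^2\le \eta\|\nabla f\|_{L^2}^2+C_\eta\|f\|_{L^2}^2$ legitimately absorbs the boundary term, since the resulting constants need only be independent of $t$ (dependence on the fixed $\varepsilon$ through $V_r(R)\sim\varepsilon^{-1}$ is harmless, and the paper's own constants also depend on $\varepsilon$ via $\|V_r\|_{L^\infty}$). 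One small point: you invoke $V\in C^2(\overline{B_R})$ to bound $\int f^2\Delta V$; the paper only states $W\in C^1(\overline\Omega)\cap C^\infty(\Omega)$, but boundedness of $\Delta V$ up to the boundary follows directly from the steady equation $\varepsilon\Delta V=U-\varepsilon|\nabla V|^2$ (or from elliptic regularity up to the smooth boundary), so this is easily repaired. In comparison, the paper's $\phi$-based argument buys a boundary-term-free identity and stays within the radial framework it needs later, while your argument is more direct and does not rely on the anti-derivative reformulation for this particular step; both close the estimate identically via Lemma \ref{L2}.
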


\begin{proof}
Using \eqref{3.12}, we write the first equation of \eqref{perturbation} as
\begin{equation}\label{phi}
\begin{cases}
\phi_{t}=(u-U)_r
-\chi V_r(u-U)-\chi U\psi_{r}-\chi(u-U)\psi_{r},& r\in(0,R),t>0,\\
\phi(0,t)=\phi(R,t)=0, & t>0,
\end{cases}
\end{equation}
with the following identity
\begin{eqnarray*}
\begin{aligned}
\phi_tr^{n-1}(u-U)_r &=(r^{n-1}\phi_t(u-U))_r-(u-U)(r^{n-1}\phi)_{tr}\\
&=(r^{n-1}(u-U)\phi_t)_r-\frac{1}{2}(r^{n-1}(u-U)^2)_t.
\end{aligned}
\end{eqnarray*}
Then multiplying $\eqref{phi}$ by $r^{n-1}(u-U)_r$ and integrating the result by parts along with Young's inequality,
we have
\begin{equation}\label{3.6}
\begin{split}&\frac{1}{2}\frac{d}{dt}\int_0^R r^{n-1}(u-U)^2+\int_0^Rr^{n-1} \left|(u-U)_r\right|^{2}\\
&=\chi\int_0^R r^{n-1}V_r(u-U)(u-U)_r
+\chi\int_0^R r^{n-1}U\psi_r(u-U)_r\\&\quad+\chi\int_0^Rr^{n-1}\psi_r(u-U)(u-U)_r
\\&\leq\frac{1}{2}\int_0^Rr^{n-1} \left|(u-U)_r\right|^{2}+C\int_0^R r^{n-1}[V_r^2(u-U)^2+U^2 \psi_r^2]\\&
\quad+C\int_0^R r^{n-1}(u-U)^2|\psi_r|^2.
\end{split}\end{equation}
Using \eqref{sobolev}, we have
\begin{equation}\label{eqn3.22}
\|u(\cdot,t)-U(\cdot)\|_{L^\infty}\leq CN(t).
\end{equation}
and hence get
\begin{equation}\label{eqn3.21}
\int_0^Rr^{n-1}(u-U)^2|\psi_r|^2\leq \|u-U\|^2_{L^\infty}\int_0^Rr^{n-1}|\psi_r|^2 \leq CN(t)^2\int_0^Rr^{n-1}|\psi_r|^2.
\end{equation}
Substituting \eqref{eqn3.21} into \eqref{3.6}, since $\|\nabla V\|_{L^\infty}=\|V_r\|_{L^\infty}\leq C$ and $\|U\|_{L^\infty}\leq C$, we  have
\begin{equation}\label{eqn3.15}
\begin{split}\frac{d}{dt}\|u(\cdot,t)-U(\cdot)\|_{L^2}^2
+\|\nabla(u(\cdot,t)-U(\cdot))\|_{L^2}^2\leq C\|u(\cdot,t)-U(\cdot)\|_{L^2}^2
+C\|\psi(\cdot,t)\|_{H^1}^2.
\end{split}\end{equation}
Now, multiplying \eqref{eqn3.15} by $e^{\mu t}$ and integrating the resulting inequality in $t$, from Lemma \ref{L2}, we obtain the desired estimate \eqref{3.16}.
\end{proof}

We proceed to derive the $H^1$ estimate for $\psi$.
\begin{lemma}\label{H1-psi}Let the assumptions of Proposition \ref{aprioriestimate} hold.
If $N(T)\ll1$, then it holds
\begin{equation}\label{3.8}
e^{\mu t}\|\nabla\psi(\cdot,t)\|_{L^2}^2+\int_0^te^{\mu \tau}\|\psi(\cdot,\tau)\|_{H^2}^2\leq C(\|\phi_0\|_{L^2}^2+\|\psi_0\|_{H^1}^2),\end{equation} for any $t\in[0,T]$, where $C>0$ is a constant  independent of $t$.

\end{lemma}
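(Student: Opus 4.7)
The natural energy quantity for an $H^1$ estimate of $\psi$ is obtained by testing the second equation of \eqref{perturbation} against $-\Delta \psi$, which in the radial setting means multiplying by $-(r^{n-1}\psi_r)_r$ and integrating over $(0,R)$. Since $\psi_r(0,t)=0$ and $\psi(R,t)=0$ (hence $\psi_t(R,t)=0$), integrating by parts in $r$ eliminates the boundary terms, and the left-hand side produces
\[
\frac{1}{2}\frac{d}{dt}\int_0^R r^{n-1}\psi_r^{2}\,dr+\varepsilon\int_0^R\frac{|(r^{n-1}\psi_r)_r|^2}{r^{n-1}}\,dr,
\]
which, via $\int_0^R r^{n-1}\psi_r^{2}\,dr=\omega_n^{-1}\|\nabla\psi\|_{L^2}^2$ and $\int_0^R|(r^{n-1}\psi_r)_r|^2 r^{-(n-1)}\,dr=\omega_n^{-1}\|\Delta\psi\|_{L^2}^2$, is the desired dissipative quantity. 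The three remaining terms coming from $2\varepsilon V_r\psi_r$, $\varepsilon\psi_r^2$ and $-(u-U)=-(r^{n-1}\phi)_r/r^{n-1}$ must be estimated on the right-hand side.

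The first and third perturbative terms are routine: by Cauchy–Schwarz and Young's inequality, using the bound $\|V_r\|_{L^\infty}\le C$ (from smoothness of the steady state),
\[
\Bigl|2\varepsilon\int_0^R V_r\psi_r(r^{n-1}\psi_r)_r\Bigr|+\Bigl|\int_0^R (u-U)(r^{n-1}\psi_r)_r\Bigr|\le \tfrac{\varepsilon}{2\omega_n}\|\Delta\psi\|_{L^2}^2+C\|\nabla\psi\|_{L^2}^2+C\|u-U\|_{L^2}^2,
\]
with the first summand absorbed on the left. The nonlinear term $\varepsilon\int_0^R\psi_r^2(r^{n-1}\psi_r)_r$ is where the restriction $n\le 3$ is used: after Young's inequality one is reduced to controlling $\int_0^R r^{n-1}\psi_r^4\,dr\simeq\|\nabla\psi\|_{L^4(B_R)}^4$. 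Here I would apply the Gagliardo–Nirenberg/Sobolev interpolation $\|\nabla\psi\|_{L^4}^4\le C\|\nabla\psi\|_{L^2}^2\|\nabla\psi\|_{H^1}^2$ (valid for $n\le 3$), and then bound $\|\nabla\psi\|_{L^2}^2\le CN(t)^2$. The smallness $N(T)\ll 1$ then allows absorbing both the resulting $N(t)^2\|\Delta\psi\|_{L^2}^2$ into the left and the $CN(t)^2\|\nabla\psi\|_{L^2}^2$ into a harmless lower-order term.

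Collecting, I arrive at a differential inequality of the form
\[
\frac{d}{dt}\|\nabla\psi\|_{L^2}^2+\tfrac{\varepsilon}{2}\|\Delta\psi\|_{L^2}^2\le C\|\nabla\psi\|_{L^2}^2+C\|u-U\|_{L^2}^2.
\]
Multiplying by $e^{\mu t}$, rewriting $e^{\mu t}(d/dt)\|\nabla\psi\|_{L^2}^2=(d/dt)(e^{\mu t}\|\nabla\psi\|_{L^2}^2)-\mu e^{\mu t}\|\nabla\psi\|_{L^2}^2$, and integrating on $[0,t]$, the two weighted time integrals on the right are controlled by $C(\|\phi_0\|_{L^2}^2+\|\psi_0\|_{L^2}^2)$ directly from Lemma \ref{L2} (which bounds both $\int_0^t e^{\mu\tau}\|u-U\|_{L^2}^2\,d\tau$ and $\int_0^t e^{\mu\tau}\|\psi\|_{H^1}^2\,d\tau$). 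Finally, elliptic regularity for the Dirichlet problem on $B_R$ gives $\|\psi\|_{H^2}^2\le C(\|\Delta\psi\|_{L^2}^2+\|\psi\|_{L^2}^2)$, and the $L^2$ piece is again absorbed by Lemma \ref{L2}; this upgrades the dissipation term to $\int_0^t e^{\mu\tau}\|\psi\|_{H^2}^2\,d\tau$, yielding \eqref{3.8}.

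The main technical obstacle I anticipate is the nonlinear term $\varepsilon\psi_r^2$ in the equation for $\psi$: unlike the analogous one-dimensional setting of \cite{Carrillo}, in the multi-dimensional radial problem this term is genuinely quadratic in $\nabla\psi$ and does not have an obvious sign, so its $L^2$-pairing against $\Delta\psi$ produces a super-quadratic quantity $\|\nabla\psi\|_{L^4}^4$. The restriction $n\le 3$ enters precisely here, both to make Sobolev/Gagliardo–Nirenberg available for $L^4$ control and to ensure that the extra factor of $N(t)$ can be squeezed out to close the estimate by smallness. All remaining estimates are standard weighted energy arguments, relying only on the bounded positive steady state $(U,V)$ constructed in Lemma \ref{Thm-steady} and the a priori bound $\|V_r\|_{L^\infty}\le C$.
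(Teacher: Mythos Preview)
Your argument is correct and follows essentially the same route as the paper: test the $\psi$-equation against $-\Delta\psi$, absorb the linear terms by Young's inequality, control the cubic contribution through $\|\nabla\psi\|_{L^4}^4$ and the smallness of $N(t)$, invoke elliptic regularity for the Dirichlet Laplacian to upgrade $\|\Delta\psi\|_{L^2}$ to $\|\psi\|_{H^2}$, and close via the weighted Gronwall argument using Lemma~\ref{L2}. One small slip: the interpolation $\|\nabla\psi\|_{L^4}^4\le C\|\nabla\psi\|_{L^2}^2\|\nabla\psi\|_{H^1}^2$ holds for $n=2$ but not with those exponents for $n=3$; the paper instead uses the Sobolev embedding $H^2\hookrightarrow W^{1,4}$ to write $\|\nabla\psi\|_{L^4}^4\le C\|\psi\|_{H^2}^4\le CN(t)^2\|\psi\|_{H^2}^2$, which works uniformly for $n\le 3$ and yields the same conclusion.
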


\begin{proof}  We write the second equation of \eqref{perturbation}, using the Cartesian coordinates,  as
\begin{equation}\label{3.10}
\begin{cases}
\psi_{t}=\varepsilon\Delta\psi+2 \varepsilon\nabla V\nabla\psi+\varepsilon|\nabla\psi|^{2}-(u-U),& x\in B_R,\ t>0\\
\psi=0,& x\in \partial B_R,\ t>0.
\end{cases}
\end{equation}
Multiplying $\eqref{3.10}$ by $\Delta\psi$ and integrating the resultant equation by parts alongside Young's inequality, we have
\begin{equation}\label{3.5}
\begin{split}
&\frac{1}{2}\frac{d}{dt}\int_{B_R}|\nabla\psi|^{2}+\varepsilon\int_{B_R} |\Delta\psi|^2\\&
=-2\varepsilon\int_{B_R} \nabla V\nabla\psi\Delta\psi-\varepsilon\int_{B_R}|\nabla\psi|^2\Delta\psi
+\int_{B_R}(u-U)\Delta\psi\\
&\leq\frac{3\varepsilon}{4}\int_{B_R}|\Delta\psi|^2+4\varepsilon\int_{B_R}|\nabla V|^2|\nabla\psi|^2
+\frac{1}{\varepsilon}\int_{B_R}|u-U|^2+\varepsilon\int_{B_R}|\nabla\psi|^4.
\end{split}
\end{equation} By the Sobolev imbedding theorem for $n=2,3$, we get
\begin{equation}\label{3.7}
\int_{B_R}|\nabla\psi|^4\leq C\|\psi\|_{H^2}^4\leq CN(t)^2\|\psi\|_{H^2}^2,\end{equation}
where we have used the fact $\|\psi(\cdot,t)\|_{H^2}\leq N(t)$ from \eqref{NT}. Employing the standard regularity theory for the Poisson equation:
\[\begin{cases}
\Delta\psi=f,& x\in B_R, \\
\psi=0,& x\in \partial B_R,
\end{cases}
\]
we have
\begin{equation}\label{ell-regular}
\|\psi\|_{H^2(B_R)}\leq C_1\|f\|_{L^2(B_R)}=C_1\|\Delta\psi\|_{L^2(B_R)}.
\end{equation}
It then follows from \eqref{3.5}-\eqref{ell-regular} that
\begin{equation*}
\begin{split}
\frac{1}{2}\frac{d}{dt}\int_{B_R}|\nabla\psi|^2+\left(\frac{\varepsilon}{4C_1^2}-CN(t)^2\right)\|\psi(\cdot,t)\|_{H^2}^2\leq C\varepsilon\int_{B_R}|\nabla\psi|^2
+\frac{1}{\varepsilon}\int_{B_R}|u-U|^2.
\end{split}
\end{equation*}
Thus, if $N(t)\ll1$ such that $\frac{\varepsilon}{4C_1^2}-CN(t)^2>\frac{\varepsilon}{8C_1^2}$, we have
\begin{equation*}
\begin{split}
\frac{d}{dt}\int_{B_R}|\nabla\psi|^2+\frac{\varepsilon}{4C_1^2}\|\psi(\cdot,t)\|_{H^2}^2\leq C\int_{B_R}|\nabla\psi|^2
+C\int_{B_R}|u-U|^2.
\end{split}
\end{equation*}
Now, multiplying this inequality by $e^{\mu t}$, integrating the resulting equation in $t$, and using Lemma \ref{L2}, we obtain \eqref{3.8}.
\end{proof}

We also need the $H^1$ estimate for $(u-U)$.
\begin{lemma}\label{H1-u}Let the assumptions of Proposition \ref{aprioriestimate} hold.
Assume that $N(T)\ll1$. Then it holds that
\[\begin{split}
e^{\mu t}\|\nabla(u(\cdot,t)-U(\cdot))\|_{L^2}^2+
\int_0^te^{\mu \tau}\|(u-U)_\tau(\cdot,\tau)\|_{L^2}^2\leq C(\|\phi_0\|_{L^2}^2+
\|u_0-U\|_{H^1}^2+\|\psi_0\|_{H^1}^2),\end{split}\]
for any $t\in[0,T]$, where $C>0$ is a constant  independent of $t$.
\end{lemma}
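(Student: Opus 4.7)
The plan is to perform an energy estimate on the equation for $w := u - U$ by multiplying by $w_t$, relying crucially on the fact that the boundary contributions cancel thanks to the mixed boundary condition. Writing $\psi := v - V$ and subtracting the steady-state equation from the first equation of \eqref{uv}, we obtain
\[
w_t=\Delta w-\chi\nabla\cdot(w\nabla V+u\nabla\psi)\quad\text{in }B_R,
\]
together with the boundary condition $w_r=\chi(w V_r+u\psi_r)$ on $\partial B_R$, obtained by subtracting $U_r=\chi U V_r$ from $u_r=\chi u v_r$.

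First I would multiply this equation by $w_t$ and integrate over $B_R$. Integration by parts applied to $\int \Delta w\,w_t\,dx$ and to $\chi\int\nabla\cdot(w\nabla V + u\nabla\psi)\,w_t\,dx$ produces boundary contributions whose sum equals $\int_{\partial B_R}[w_r-\chi(wV_r+u\psi_r)]w_t\,dS$, which vanishes by the boundary condition. This yields
\[
\|w_t\|_{L^2}^2+\tfrac12\tfrac{d}{dt}\|\nabla w\|_{L^2}^2=\chi\int_{B_R}(w\nabla V+u\nabla\psi)\cdot\nabla w_t\,dx.
\]
Then, to remove the $\nabla w_t$ on the right, I apply the product rule $A\cdot\nabla w_t=\partial_t(A\cdot\nabla w)-A_t\cdot\nabla w$ with $A=w\nabla V$ and $A=u\nabla\psi$, using $V_t=0$ and $u_t=w_t$, to arrive at
\[
\|w_t\|_{L^2}^2+\tfrac{d}{dt}\mathcal E(t)=-\chi\int_{B_R}[w_t\nabla V+w_t\nabla\psi+u\nabla\psi_t]\cdot\nabla w\,dx,
\]
with the modified energy $\mathcal E(t):=\tfrac12\|\nabla w\|_{L^2}^2-\chi\int_{B_R}(w\nabla V+u\nabla\psi)\cdot\nabla w\,dx$. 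Young's inequality together with the boundedness of $\nabla V$ and $u$ gives the equivalence $\mathcal E(t)\sim\|\nabla w(\cdot,t)\|_{L^2}^2$ modulo lower-order terms controlled by $\|w\|_{L^2}^2$ and $\|\nabla\psi\|_{L^2}^2$.

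The main obstacle is the term $\chi\int u\nabla\psi_t\cdot\nabla w$. Since $\psi|_{\partial B_R}=0$ forces $\psi_t|_{\partial B_R}=0$, integration by parts in space yields
\[
\chi\int_{B_R}u\nabla\psi_t\cdot\nabla w\,dx=-\chi\int_{B_R}(\nabla u\cdot\nabla w+u\Delta w)\psi_t\,dx,
\]
and the $\psi$-equation in \eqref{trans} directly gives $\|\psi_t\|_{L^2}\leq C(\|\psi\|_{H^2}+\|w\|_{L^2})$. The $\nabla u\cdot\nabla w$ piece is handled by H\"older with $L^4\times L^4\times L^2$ and the Sobolev embedding $H^1\hookrightarrow L^4$ valid for $n\leq 3$, together with the \emph{a priori} bound $\|w\|_{H^2}\leq N(t)$. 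For the genuinely second-order piece $u\Delta w\,\psi_t$, I would substitute $\Delta w=w_t+\chi\nabla\cdot(w\nabla V+u\nabla\psi)$ from the PDE itself and apply Young's inequality: the $\|w_t\|_{L^2}^2$ contribution is absorbed into the left-hand side, while the $\|\nabla\cdot(w\nabla V+u\nabla\psi)\|_{L^2}^2$ contribution is treated exactly as in the proof of Lemma \ref{H1-psi}. The remaining two terms $\chi\int w_t\nabla V\cdot\nabla w$ and $\chi\int w_t\nabla\psi\cdot\nabla w$ are bounded by $\tfrac14\|w_t\|^2+C\|\nabla w\|^2$ and $\tfrac14\|w_t\|^2+CN(t)^2\|\psi\|_{H^2}^2$, respectively, via Young and H\"older-Sobolev.

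Collecting everything, I obtain the differential inequality
\[
\tfrac12\|w_t\|_{L^2}^2+\tfrac{d}{dt}\mathcal E(t)\leq C\bigl(\|w\|_{L^2}^2+\|\nabla w\|_{L^2}^2+\|\psi\|_{H^2}^2\bigr).
\]
To close the estimate, I multiply by $e^{\mu t}$ and integrate over $[0,t]$, using the previously established bounds $\int_0^t e^{\mu\tau}\|\nabla w\|_{L^2}^2\,d\tau\leq C(\|\phi_0\|_{L^2}^2+\|u_0-U\|_{L^2}^2+\|\psi_0\|_{L^2}^2)$ from Lemma \ref{L2-u} and $\int_0^t e^{\mu\tau}\|\psi\|_{H^2}^2\,d\tau\leq C(\|\phi_0\|_{L^2}^2+\|\psi_0\|_{H^1}^2)$ from Lemma \ref{H1-psi}, together with $\mathcal E(t)\sim\|\nabla w(\cdot,t)\|_{L^2}^2$ and the pointwise-in-time bounds on $\|w\|_{L^2}^2$ and $\|\nabla\psi\|_{L^2}^2$ from the same lemmas, to arrive at the desired estimate.
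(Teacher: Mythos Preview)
Your argument is correct and complete, but it takes a genuinely different route from the paper's proof. The paper does not multiply the $w$-equation by $w_t$; instead it stays in the anti-derivative formulation and differentiates the $\phi$-equation \eqref{phi} in time, then tests against $r^{n-1}\phi_t$. The key advantage of that choice is that $\phi_t$ vanishes at both $r=0$ and $r=R$, so every integration by parts is boundary-free, and the identity $(r^{n-1}\phi)_{rt}=r^{n-1}(u-U)_t$ converts the top-order term directly into the desired dissipation $\|(u-U)_t\|_{L^2}^2$. The troublesome $\psi_{tr}$ term is handled by a single integration by parts in $r$ that produces only $(u-U)_t\psi_t$ and $(u-U)_r\psi_t\phi_t$, both lower order; no substitution of the PDE for $\Delta w$ is needed. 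Finally, $e^{\mu t}\|\nabla(u-U)\|_{L^2}^2$ is recovered algebraically from the $\phi$-equation once $e^{\mu t}\|\phi_t\|_{L^2}^2$ is controlled.

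Your approach is the more classical parabolic energy estimate: test the perturbation equation by its time derivative and rely on the exact cancellation of the flux boundary terms. The price is the $u\nabla\psi_t\cdot\nabla w$ term, which you handle correctly by integrating by parts (using $\psi_t|_{\partial B_R}=0$) and then substituting $\Delta w=w_t+\chi\nabla\cdot(w\nabla V+u\nabla\psi)$ to avoid any genuine second-order quantity on the right. This is a valid and slightly more transparent strategy, and it does not rely on the radial anti-derivative structure as heavily; the paper's route, on the other hand, keeps every integration by parts completely clean and never needs to re-invoke the equation for $\Delta w$.
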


\begin{proof} First the equation \eqref{phi} gives rise to
\begin{equation}\label{eqn3.20}
r^{n-1}\phi_{t}^2\leq Cr^{n-1}(|(u-U)_r|^2+V_r^2(u-U)^2+U^2\psi_{r}^2+(u-U)^2\psi_{r}^2).\end{equation}
Integrating this inequality multiplied by $e^{\mu t}$ and using the boundedness of $\nabla V$ and $U$, we obtain
\begin{equation}\label{3.20}
\begin{split}
\int_0^te^{\mu \tau}\|\phi_{t}\|^2_{L^2}
&\leq C\int_0^te^{\mu \tau}
(\|\nabla(u-U)\|^2_{L^2}+\|u-U\|^2_{L^2}+\|\nabla\psi\|^2_{L^2}+\|u-U\|_{L^\infty}^2\|\nabla\psi\|^2_{L^2})\\
&\leq C(\|u_0-U\|_{L^2}^2+\|\phi_0\|_{L^2}^2+\|\psi_0\|_{L^2}^2),
\end{split}
\end{equation}
where in the second inequality we have used \eqref{eqn3.22} and Lemmas \ref{L2}-\ref{L2-u}.
Similarly, using the equation \eqref{3.10}, the inequality \eqref{3.7}, Lemma \ref{L2} and Lemma \ref{H1-psi}, we get
\begin{equation}\label{3.21}
\begin{split}
\int_0^te^{\mu \tau}\|\psi_{t}\|_{L^2}^2&\leq C\int_0^te^{\mu \tau}
(\|\psi\|_{H^2}^2+\|\nabla\psi\|_{L^4}^4+\|u-U\|^2_{L^2})\leq C( \|\phi_0\|_{L^2}^2+\|\psi_0\|_{H^1}^2).
\end{split}
\end{equation}

We next estimate $\|\phi_t(\cdot,t)\|_{L^2}$. Differentiating the equation \eqref{phi} with respect to $t$ gives
\begin{equation}\label{3.22}
\begin{cases}
\phi_{tt}=(u-U)_{tr}
-\chi V_r(u-U)_t-\chi(u-U)_t\psi_{r}-\chi [U+(u-U)]\psi_{tr},& r\in(0,R),\\
\phi_t(0,t)=\phi_t(R,t)=0.
\end{cases}
\end{equation}
Note that
\[\begin{split}(u-U)_{tr}r^{n-1}\phi_t&=((u-U)_tr^{n-1}\phi_t)_r-(u-U)_t(r^{n-1}\phi)_{rt}\\&
=((u-U)_tr^{n-1}\phi_t)_r-r^{n-1}|(u-U)_t|^2,\end{split}\]
and
\[\begin{split}-\chi[U+(u-U)]\psi_{tr}r^{n-1}\phi_t=&-\chi([U+(u-U)]\psi_tr^{n-1}\phi_t)_r
+\chi[U+(u-U)]_r\psi_tr^{n-1}\phi_t\\&+\chi[U+(u-U)]\psi_tr^{n-1}(u-U)_t,\end{split}\]
where \eqref{3.12} has been used.
Multiplying the equation \eqref{3.22} by $r^{n-1}\phi_t$ and integrating  by parts along with the boundary conditions in \eqref{3.22},
we have
\begin{equation}\label{3.23}
\begin{split}&\frac{1}{2}\frac{d}{dt}\int_0^R r^{n-1}\phi_t^2+\int_0^Rr^{n-1} \left|(u-U)_t\right|^{2}\\
&=-\chi\int_0^R V_r\phi_tr^{n-1}(u-U)_t-\chi\int_0^R \psi_r\phi_tr^{n-1}(u-U)_t
\\&\quad+\chi\int_0^R [U+(u-U)]_r\psi_tr^{n-1}\phi_t+\chi\int_0^R[U+(u-U)]\psi_tr^{n-1}(u-U)_t
\\&\leq\frac{1}{2}\int_0^Rr^{n-1} \left|(u-U)_t\right|^{2}+C\int_0^R r^{n-1}[\phi_t^2+\psi_r^2\phi_t^2+\psi_t^2+|(u-U)_r|^2\phi_t^2+(u-U)^2\psi_t^2].
\end{split}\end{equation}
By H\"{o}lder's inequality and the Sobolev inequality for $n=2,3$ (cf. \cite[Theorem 7.10]{gilbarg1977elliptic}), we get
\begin{equation}\label{3.24}
\begin{split}\omega_n\int_0^R r^{n-1}\psi_r^2\phi_t^2
=\int_{B_R}|\nabla\psi|^2\phi_t^2&\leq\left(\int_{B_R}|\nabla\psi|^4\right)^{\frac{1}{2}}
\left(\int_{B_R}|\phi_t|^4\right)^{\frac{1}{2}}\\&
\leq C \|\psi\|_{H^2}^2\int_{B_R}|\nabla\phi_t|^2\\&\leq CN(t)^2\int_{B_R}|\nabla\phi_t|^2,
\end{split}\end{equation}
where we have used $\|\psi(\cdot,t)\|_{H^2}\leq N(t)$ again from \eqref{NT}. Similarly, noting $\|(u-U)(\cdot,t)\|_{H^2}\leq N(t)$, it holds
\[\omega_n\int_0^Rr^{n-1}|(u-U)_r|^2\phi_t^2=\int_{B_R}|\nabla(u-U)|^2\phi_t^2\leq C\|u-U\|_{H^2}^2\int_{B_R}|\nabla\phi_t|^2\leq CN(t)^2\int_{B_R}|\nabla\phi_t|^2.\]
By \eqref{eqn3.22}, we get
\[\int_0^R r^{n-1}(u-U)^2\psi_t^2\leq CN(t)^2\int_0^R r^{n-1}\psi_t^2.\]
Observe that the second term of \eqref{3.23} satisfies
\begin{equation}\label{3.26}
\begin{split}
\int_0^Rr^{n-1} \left|(u-U)_t\right|^{2}&=\int_0^Rr^{n-1}\left|\frac{(r^{n-1}\phi_t)_r}{r^{n-1}}\right|^{2}\\
&=\int_0^R\left(r^{n-1}\phi_{tr}^2+2(n-1)\phi_t\phi_{tr}r^{n-2}+(n-1)^2r^{n-3}\phi_t^2\right)\\
&=\int_0^Rr^{n-1}\phi_{tr}^2+(n-1)\int_0^Rr^{n-3}\phi_t^2+(n-1)\phi_t^2(R,t)R^{n-2}\\
&\geq\frac{1}{\omega_n}\int_{B_R}|\nabla\phi_t|^2.
\end{split}\end{equation} It then follows from \eqref{3.23} that
\begin{equation*}
\begin{split}\frac{d}{dt}\int_0^R r^{n-1}\phi_t^2+\frac{1}{2}\int_0^Rr^{n-1} \left|(u-U)_t\right|^{2}+\left(\frac{1}{2\omega_n}-CN(t)^2\right)\int_{B_R}|\nabla\phi_t|^2\leq C\int_0^R r^{n-1}(\phi_t^2+\psi_t^2).
\end{split}\end{equation*}
If $N(t)\ll1$  such that $\frac{1}{2\omega_n}>CN(t)^2$, then
\begin{equation}\label{3.27}
\begin{split}\frac{d}{dt}\int_0^R r^{n-1}\phi_t^2+\frac{1}{2}\int_0^Rr^{n-1} \left|(u-U)_t\right|^{2}\leq C\int_0^R r^{n-1}(\phi_t^2+\psi_t^2).
\end{split}\end{equation}
Now multiplying \eqref{3.27} by $e^{\mu t}$ and integrating the resulting equation in $t$, by the estimates \eqref{3.20}-\eqref{3.21}, we have
\begin{equation}\label{3.29}
e^{\mu t}\int_{B_R}\phi_t^2+\int_0^te^{\mu \tau}\int_{B_R}\left|(u-U)_t\right|^{2}\leq C(\|\phi_0\|_{L^2}^2+
\|u_0-U\|_{H^1}^2+\|\psi_0\|_{H^1}^2),
\end{equation}
where we have used $\phi_t^2(\cdot,0)\leq C(|(u_0-U)_r|^2+(u_0-U)^2+\psi_{0r}^2+\psi_{0r}^2)$ owing to \eqref{eqn3.20}.
Using the equation \eqref{phi} again, by \eqref{3.29} and Lemmas \ref{L2-u}-\ref{H1-psi}, we have
\begin{equation}\label{3.30}
\begin{split}
e^{\mu t}\int_{B_R}|\nabla(u-U)|^2&\leq Ce^{\mu t}\int_{B_R}(\phi_t^2+(u-U)^2+|\nabla\psi|^2)\\&\leq C(\|\phi_0\|_{L^2}^2+
\|u_0-U\|_{H^1}^2+\|\psi_0\|_{H^1}^2).
\end{split}\end{equation}
The desired estimate follows from \eqref{3.29} and \eqref{3.30}.
\end{proof}

The $H^2$ estimate for $\psi$ is as follows.

\begin{lemma}\label{H2-psi}Let the assumptions of Proposition \ref{aprioriestimate} hold.
Assume that $N(T)\ll1$, then it holds
\begin{equation}\label{eqn3.38}
\begin{split}
e^{\mu t}\|\psi(\cdot,t)\|_{H^2}^2+
\int_0^te^{\mu \tau}\|\psi(\cdot,\tau)\|_{H^3}^2\leq C(\|\phi_0\|_{L^2}^2+
\|u_0-U\|_{H^1}^2+\|\psi_0\|_{H^2}^2),\end{split}\end{equation}
for any $t\in[0,T]$, where $C>0$ is a constant  independent of $t$.
\end{lemma}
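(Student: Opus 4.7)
The plan is to exploit \eqref{3.10} as a Poisson-type equation for $\psi$ at each time slice. Since $\psi|_{\partial B_R}=0$, standard $H^2$ and $H^3$ elliptic regularity yield $\|\psi(\cdot,t)\|_{H^2}\le C\|\Delta\psi(\cdot,t)\|_{L^2}$ and $\|\psi(\cdot,t)\|_{H^3}\le C\|\Delta\psi(\cdot,t)\|_{H^1}$, so it suffices to control $\Delta\psi$ in $L^2$ pointwise in $t$ and in $H^1$ in the time-integrated weighted sense. Rearranging \eqref{3.10} into the form
\[
\e\Delta\psi=\psi_t-2\e\nabla V\cdot\nabla\psi-\e|\nabla\psi|^2+(u-U),
\]
the quantities $\|\nabla V\|_{C^1}$, $\|\psi\|_{H^1}$, $\|u-U\|_{H^1}$ are already controlled at the required level by Lemmas \ref{L2}--\ref{H1-u}, and the quadratic term is handled by Sobolev absorption once $N(T)\ll1$. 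Thus the heart of the argument is to obtain good weighted $L^2$ bounds for $\psi_t$ (pointwise in $t$) and $\nabla\psi_t$ (integrated in $t$).

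To produce these, I differentiate \eqref{3.10} in $t$, obtaining
\[
\psi_{tt}=\e\Delta\psi_t+2\e\nabla V\cdot\nabla\psi_t+2\e\nabla\psi\cdot\nabla\psi_t-(u-U)_t,\qquad \psi_t|_{\partial B_R}=0,
\]
and test against $\psi_t$. After integration by parts and Young's inequality this yields
\[
\tfrac{1}{2}\tfrac{d}{dt}\|\psi_t\|_{L^2}^2+\e\|\nabla\psi_t\|_{L^2}^2\le C\|\psi_t\|_{L^2}^2+C\|(u-U)_t\|_{L^2}^2+2\e\Bigl|\int_{B_R}(\nabla\psi\cdot\nabla\psi_t)\psi_t\,dx\Bigr|.
\]
The last term is the delicate one: by H\"{o}lder's inequality, $\|\nabla\psi\|_{L^4}\le C\|\psi\|_{H^2}\le CN(t)$ (Sobolev embedding for $n\le3$) and $\|\psi_t\|_{L^4}\le C\|\psi_t\|_{H^1}\le C\|\nabla\psi_t\|_{L^2}$ (Sobolev plus Poincar\'{e}, since $\psi_t$ vanishes on $\partial B_R$), so the cross term is bounded by $CN(t)\|\nabla\psi_t\|_{L^2}^2$ and absorbed into the dissipation once $N(T)\ll1$. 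Multiplying by $e^{\mu t}$ for small $\mu$, using Poincar\'{e} to absorb $C\|\psi_t\|_{L^2}^2$ into the dissipation, integrating in time, and bounding $\|\psi_t(\cdot,0)\|_{L^2}\le C(\|\psi_0\|_{H^2}+\|u_0-U\|_{L^2})$ by evaluating \eqref{3.10} at $t=0$ (the $|\nabla\psi_0|^2$ contribution being handled via $N(0)\ll1$), and finally controlling the source $\int_0^t e^{\mu\tau}\|(u-U)_\tau\|_{L^2}^2\,d\tau$ via Lemma \ref{H1-u}, I obtain the pointwise weighted bound on $\|\psi_t\|_{L^2}^2$ together with the time-integrated weighted bound on $\|\nabla\psi_\tau\|_{L^2}^2$.

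With those in hand, the pointwise $H^2$-bound in \eqref{eqn3.38} follows by taking $L^2$ norms in the rewritten equation and estimating the quadratic term as $\||\nabla\psi|^2\|_{L^2}=\|\nabla\psi\|_{L^4}^2\le CN(t)\|\psi\|_{H^2}$, which is absorbed into $\|\psi\|_{H^2}\le C\|\Delta\psi\|_{L^2}$. For the time-integrated $H^3$ piece, I apply $\nabla$ to the rewritten equation, take $L^2$ norms, and bound the worst new term as $\|\nabla|\nabla\psi|^2\|_{L^2}\le C\|\nabla\psi\|_{L^4}\|D^2\psi\|_{L^4}\le CN(t)\|\psi\|_{H^3}$ (Sobolev in $n\le3$), then absorb and integrate against $e^{\mu\tau}$; the remaining right-hand side is controlled by the $\nabla\psi_\tau$ bound from the previous paragraph and the $H^2$-of-$\psi$ and $H^1$-of-$(u-U)$ bounds from Lemmas \ref{H1-psi}--\ref{H1-u}. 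The main obstacle throughout is the trilinear cross term in the time-differentiated energy identity; it only closes thanks to the dimensional restriction $n\le3$ providing the embeddings $H^2\hookrightarrow W^{1,4}$ and $H^1\hookrightarrow L^4$, together with the fact that $\psi_t$ has zero Dirichlet trace so Poincar\'{e} is at our disposal.
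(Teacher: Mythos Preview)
Your strategy is exactly the paper's: differentiate \eqref{3.10} in $t$, test the resulting equation with $\psi_t$ to obtain weighted control of $\|\psi_t\|_{L^2}^2$ and $\int_0^t e^{\mu\tau}\|\nabla\psi_\tau\|_{L^2}^2$, and then recover the $H^2$ and $H^3$ bounds on $\psi$ from elliptic regularity applied to \eqref{3.10} viewed as a Poisson problem, absorbing the quadratic term via $H^2\hookrightarrow W^{1,4}$ in $n\le 3$.

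There is one step that does not close as written. After testing with $\psi_t$ you arrive at
\[
\tfrac12\tfrac{d}{dt}\|\psi_t\|_{L^2}^2+\e\|\nabla\psi_t\|_{L^2}^2\le C\|\psi_t\|_{L^2}^2+C\|(u-U)_t\|_{L^2}^2+\cdots
\]
and propose to absorb $C\|\psi_t\|_{L^2}^2$ into the dissipation $\e\|\nabla\psi_t\|_{L^2}^2$ via Poincar\'e. That would require $CC_P<\e$, but the constant $C$ here carries a contribution $\e\|\nabla V\|_{L^\infty}^2$ from the term $2\e\int\nabla V\cdot\nabla\psi_t\,\psi_t$, and there is no reason for this product of fixed constants to be smaller than $\e/C_P$ (nor can one rebalance the Young splitting on this particular term to make it so). The paper avoids this by not absorbing at all: it instead uses the already established time-integrated bound
\[
\int_0^t e^{\mu\tau}\|\psi_\tau\|_{L^2}^2\,d\tau\le C\big(\|\phi_0\|_{L^2}^2+\|\psi_0\|_{H^1}^2\big),
\]
namely \eqref{3.21}, derived in the proof of Lemma~\ref{H1-u} directly from \eqref{3.10} and Lemma~\ref{H1-psi}. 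Once you substitute this in place of the Poincar\'e absorption, the rest of your argument (initial-time control of $\psi_t$, the $H^2$ and $H^3$ elliptic steps, and the Sobolev handling of $\|\nabla|\nabla\psi|^2\|_{L^2}$) matches the paper line for line.
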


\begin{proof}Differentiating the equation \eqref{3.10} in $t$ yields
\begin{equation}\label{3.32}
\begin{cases}
\psi_{tt}=\varepsilon\Delta\psi_t+2\varepsilon \nabla V\nabla\psi_t+2\varepsilon\nabla\psi\nabla\psi_t-(u-U)_t,& x\in B_R,\ t>0\\
\psi_t=0,& x\in \partial B_R,\ t>0.
\end{cases}
\end{equation}
Multiplying \eqref{3.32} by $\psi_t$ and integrating by parts, we get
\begin{equation}\label{3.33}
\begin{split}
&\frac{1}{2}\frac{d}{dt}\int_{B_R}\psi_t^2+\varepsilon\int_{B_R}|\nabla\psi_t|^2\\&
=2\varepsilon\int_{B_R} \nabla V\nabla\psi_t\psi_t+2\varepsilon\int_{B_R}\nabla\psi\nabla\psi_t\psi_t-\int_{B_R}(u-U)_t\psi_t \\
&\leq\frac{\varepsilon}{4}\int_{B_R}|\nabla\psi_t|^2+C\int_{B_R}(|\nabla V|^2|\psi_t|^2+|(u-U)_t|^2+|\psi_t|^2)
+C\int_{B_R}|\nabla\psi|^2|\psi_t|^2.
\end{split}
\end{equation}
As in \eqref{3.24}, by the H\"{o}lder's inequality and the Sobolev inequality, we get
\begin{equation*}%
\begin{split}
\int_{B_R}|\nabla\psi|^2|\psi_t|^2
\leq\left(\int_{B_R}|\nabla\psi|^4\right)^{\frac{1}{2}}
\left(\int_{B_R}|\psi_t|^4\right)^{\frac{1}{2}}
\leq C\|\psi\|_{H^2}^2\int_{B_R}|\nabla\psi_t|^2\leq CN(t)^2\int_{B_R}|\nabla\psi_t|^2.
\end{split}\end{equation*}
If $N(t)\ll1$, it then follows from \eqref{3.33} that
\begin{equation*}
\frac{d}{dt}\int_{B_R}\psi_t^2+\varepsilon\int_{B_R}|\nabla\psi_t|^2\leq C\int_{B_R}(|\psi_t|^2+|(u-U)_t|^2).
\end{equation*}
Multiplying this inequality by $e^{\mu t}$ and integrating in $t$, by \eqref{3.21} and Lemma \ref{H1-u}, we have
\begin{equation}\label{3.35}\begin{split}
e^{\mu t}\int_{B_R}\psi_t^2+
\varepsilon\int_0^te^{\mu \tau}\int_{B_R}|\nabla\psi_t|^2\leq C(\|\phi_0\|_{L^2}^2+
\|u_0-U\|_{H^1}^2+\|\psi_0\|_{H^2}^2),\end{split}\end{equation}
where we have used $\|\psi_t(\cdot,0)\|_{L^2}^2\leq C(\|\psi_0\|_{H^2}^2+\|u_0-U\|_{L^2}^2)$.
Write \eqref{3.10} as an elliptic equation,
\begin{equation}\label{3.47}
\begin{cases}
-\varepsilon\Delta\psi=-\psi_{t}+2 \varepsilon\nabla V\nabla\psi+\varepsilon|\nabla\psi|^{2}-(u-U),& x\in B_R,\\
\psi=0,& x\in \partial B_R.
\end{cases}
\end{equation}
Using \eqref{3.7} and \eqref{ell-regular}, we have
\begin{equation}\label{3.36}\begin{split}
e^{\mu t}\|\psi\|_{H^2}^2&\leq Ce^{\mu t}(\|\psi_t\|_{L^2}^2+\|\psi\|_{H^1}^2+\|u-U\|_{L^2}^2) \\&\leq C(\|\phi_0\|_{L^2}^2+
\|u_0-U\|_{H^1}^2+\|\psi_0\|_{H^2}^2),\end{split}\end{equation} where we have used \eqref{3.35} and Lemmas \ref{L2-u}-\ref{H1-psi}.
Applying the regularity theory of elliptic equation for \eqref{3.47}, (cf. \cite[Theorem 8.13]{gilbarg1977elliptic}), we get
\begin{equation}\label{new-3.49}
\|\psi\|_{H^3}^2\leq C(\|\psi_t\|_{H^1}^2+\|\psi\|_{H^2}^2+\||\nabla\psi|^2\|_{H^1}^2+\|u-U\|_{H^1}^2).\end{equation}
By \eqref{3.7} and the Sobolev imbedding theorem,
\begin{equation*}\begin{split}
\||\nabla\psi|^2\|_{H^1}^2&\leq C\int_{B_R}|\nabla\psi|^4+C\sum_{i,j=1}^n\int_{B_R}|\nabla\psi|^2|\partial_{x_ix_j}^2\psi|^2\\
&\leq CN(t)^2\|\psi\|_{H^2}^2+C\sum_{i,j=1}^n\left(\int_{B_R}|\nabla\psi|^4\right)^{\frac{1}{2}}
\left(\int_{B_R}|\partial_{x_ix_j}^2\psi|^4\right)^{\frac{1}{2}}\\&
\leq C\|\psi\|_{H^2}^2+C\|\psi\|_{H^2}^2\|\psi\|_{H^3}^2\\&\leq C\|\psi\|_{H^2}^2+CN(t)^2\|\psi\|_{H^3}^2.\end{split}\end{equation*}
Substituting this inequality into \eqref{new-3.49}, when $N(t)\ll1$, we get
\begin{equation}\label{3.49}
\|\psi\|_{H^3}^2\leq C(\|\psi_t\|_{H^1}^2+\|\psi\|_{H^2}^2+\|u-U\|_{H^1}^2).\end{equation}
Multiplying \eqref{3.49} by $e^{\mu t}$ and integrating in $t$,  by \eqref{3.21},
\eqref{3.35} and Lemmas \ref{L2}-\ref{H1-psi}, we obtain
\begin{equation}\label{3.44}
\begin{split}
\int_0^te^{\mu \tau}\|\psi(\cdot,\tau)\|_{H^3}^2\leq C(\|\phi_0\|_{L^2}^2+
\|u_0-U\|_{H^1}^2+\|\psi_0\|_{H^2}^2).\end{split}\end{equation}
The desired estimate \eqref{eqn3.38} follows from \eqref{3.36} and \eqref{3.44}.
\end{proof}

Finally, we establish the $H^2$ estimate for $(u-U)$.

\begin{lemma}\label{H2-u} Let the assumptions of Proposition \ref{aprioriestimate} hold.
Assume that $N(T)\ll1$, then it holds
\begin{equation}\label{3.45}
\begin{split}
e^{\mu t}\|(u(\cdot,t)-U(\cdot))\|_{H^2}^2+
\int_0^te^{\mu \tau}\|(u-U)(\cdot,\tau)\|_{H^3}^2\leq C(\|u_0-U\|_{H^2}^2+\|\psi_0\|_{H^2}^2),\end{split}\end{equation}
for any $t\in[0,T]$, where $C>0$ is a constant  independent of $t$.
\end{lemma}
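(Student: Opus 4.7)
The strategy is the $u-U$ analogue of Lemma \ref{H2-psi}: I will obtain enough $t$-derivative control on the perturbation and then invoke elliptic regularity on the spatial equation for $u-U$ itself. Working with the radial mass distribution function $\phi$ is convenient because its zero boundary values make the $t$-derivative energy estimates clean; the exchange with spatial derivatives of $u-U$ is then recovered algebraically from the $\phi$-equation and \eqref{3.22}.

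First I would differentiate \eqref{3.22} once more in $t$, producing an evolution equation for $\phi_{ttt}$ with the inherited Dirichlet data $\phi_{tt}(0,t)=\phi_{tt}(R,t)=0$. Multiplying by $r^{n-1}\phi_{tt}$ and integrating by parts on $(0,R)$ in the manner of \eqref{3.23}-\eqref{3.26} (with $\phi_t\mapsto\phi_{tt}$ and $(u-U)_t\mapsto(u-U)_{tt}$), I obtain a differential inequality of the form
\begin{equation*}
\frac{d}{dt}\|\phi_{tt}\|_{L^2}^2 + \|(u-U)_t\|_{H^1}^2 \leq C\|\phi_{tt}\|_{L^2}^2 + C\|\psi_{tt}\|_{L^2}^2 + \text{(already controlled remainders)},
\end{equation*}
after the cubic nonlinear terms are absorbed using smallness of $N(t)$ and the Sobolev embedding valid in $n\leq 3$ (the same mechanism as in \eqref{3.24}). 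Multiplying by $e^{\mu t}$ and integrating in time yields the pointwise bound $e^{\mu t}\|\phi_{tt}\|_{L^2}^2\leq C$ together with $\int_0^t e^{\mu\tau}\|(u-U)_\tau\|_{H^1}^2\,d\tau\leq C$, where the gradient-recovery identity \eqref{3.26} is applied one $t$-order higher to identify $\|(u-U)_{tt}\|_{L^2}^2$ with a bound on $\|\nabla\phi_{tt}\|_{L^2}^2$.

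Reading \eqref{3.22} as an algebraic identity and solving for $(u-U)_{rt}$, together with the relation $(u-U)_t=\phi_{rt}+\frac{n-1}{r}\phi_t$, upgrades the previous estimate to the pointwise bound $e^{\mu t}\|(u-U)_t\|_{H^1}^2\leq C$. I would then rewrite the perturbation equation for $u-U$ as the elliptic problem
\begin{equation*}
\Delta(u-U)=(u-U)_t+\chi\,\nabla\cdot\!\left[(u-U)(\nabla V+\nabla\psi)+U\nabla\psi\right]\quad\text{in }B_R,
\end{equation*}
with the Robin-type boundary condition $\partial_r(u-U)=\chi(u-U)(V_r+\psi_r)+\chi U\psi_r$ at $r=R$ and $\partial_r(u-U)=0$ at $r=0$ from radial symmetry. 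Standard elliptic regularity for this Neumann/Robin problem, combined with the trace control $\|\psi_r\|_{H^{1/2}(\partial B_R)}\lesssim\|\psi\|_{H^2}$ furnished by Lemma \ref{H2-psi}, yields
\begin{equation*}
\|u-U\|_{H^2}^2 \leq C\bigl(\|(u-U)_t\|_{L^2}^2+\|\psi\|_{H^2}^2+\|u-U\|_{H^1}^2\bigr)+CN(t)^2\|u-U\|_{H^2}^2,
\end{equation*}
whose last term is absorbed by smallness of $N(t)$; this gives the pointwise $H^2$ bound. The time-integrated $H^3$ bound follows by the same argument at one higher order, using the $L^2_t H^1_x$ bound on $(u-U)_t$ obtained above and the $\int_0^t e^{\mu\tau}\|\psi\|_{H^3}^2$ bound from Lemma \ref{H2-psi}.

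The main obstacle I anticipate is handling the inhomogeneous Robin boundary condition on $u-U$: unlike the clean homogeneous Dirichlet condition enjoyed by $\psi$ in Lemma \ref{H2-psi}, the boundary trace of $u-U$ couples its regularity to that of $\psi$, so the trace-theorem bookkeeping has to be pushed through both the $H^2$ and the $H^3$ elliptic estimates. A secondary difficulty is controlling the $H^2$-norm of the quadratic product $(u-U)\nabla\psi$ that appears in the $H^3$ step, where a careful product-rule expansion together with the Sobolev embedding for $n\leq 3$ and the smallness of $N(t)$ are needed to absorb top-order contributions back into the left-hand side.
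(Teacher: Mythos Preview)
Your overall strategy---control $t$-derivatives first, then recover spatial regularity via elliptic estimates---matches the paper's, but both halves of your execution diverge from it in ways that matter.

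\medskip
\textbf{The $\phi_{ttt}$ step has a gap.} Differentiating \eqref{3.22} once more in $t$ and testing against $r^{n-1}\phi_{tt}$, the term $-\chi u\,\psi_{ttr}$ forces, after integration by parts, contributions of the form $\int \psi_{tt}\,(u-U)_{tt}$ and $\int u_r\,\psi_{tt}\,\phi_{tt}$. These put $\int_0^t e^{\mu\tau}\|\psi_{tt}\|_{L^2}^2$ on the right-hand side. But from \eqref{3.32}, $\psi_{tt}=\varepsilon\Delta\psi_t+\cdots$, so this requires $\psi_t\in L^2_tH^2_x$; the previous lemmas only give $\psi_t\in L^\infty_tL^2_x\cap L^2_tH^1_x$ (see \eqref{3.35}). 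Closing that would need a further estimate whose initial data involves $\|\nabla\psi_t(\cdot,0)\|_{L^2}$, i.e.\ $\psi_0\in H^3$---strictly more than the hypothesis $\psi_0\in H^2$. So as written your step~1 does not close. The paper avoids this by abandoning the $\phi$-formulation here: it writes $(u-U)_t=\nabla\cdot(j-J)$ with $j=\nabla u-\chi u\nabla v$ and the \emph{homogeneous} flux condition $(j-J)\cdot\nu=0$, differentiates once in $t$, and tests against $(u-U)_t$. The only $\psi$-input needed is $\int_0^t e^{\mu\tau}\|\nabla\psi_t\|_{L^2}^2$, which is already available from \eqref{3.35}. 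This yields $e^{\mu t}\|(u-U)_t\|_{L^2}^2+\int_0^t e^{\mu\tau}\|\nabla(u-U)_t\|_{L^2}^2\le C$ directly, without ever touching $\psi_{tt}$ and without the extra $t$-derivative you propose.

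\medskip
\textbf{The elliptic step: the paper sidesteps the Robin issue entirely.} You correctly flag the inhomogeneous Robin boundary condition on $u-U$ as the main obstacle. Rather than push trace estimates through, the paper introduces $h:=ue^{-\chi v}$ and $H:=Ue^{-\chi V}$; a direct computation shows $\partial_\nu h=0$ on $\partial B_R$, so $h-H$ satisfies an elliptic equation with \emph{homogeneous} Neumann data. Standard Poisson regularity then gives $\|h-H\|_{H^2}$ and $\|h-H\|_{H^3}$ cleanly, and the algebraic identity $u-U=e^{\chi(V+\psi)}(h-H)-U(1-e^{-\chi\psi})e^{\chi\psi}$ transfers these back to $u-U$. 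This substitution is the key device you are missing; it makes the boundary bookkeeping you worry about disappear.
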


\begin{proof} To save notation, we set $j:=\nabla u-\chi u\nabla v$ and $J:=\nabla U-\chi U\nabla V$, then $(u-U)$ satisfies
\begin{equation*}
\begin{cases}
(u-U)_{t}=\nabla\cdot(j-J),& x\in B_R,t>0,\\
(j-J)\cdot\nu=0,& x\in \partial B_R,t>0.
\end{cases}
\end{equation*}
Differentiating this equation in $t$ yields
\begin{equation}\label{3.37}
\begin{cases}
(u-U)_{tt}=\nabla\cdot(j-J)_t,& x\in B_R,t>0,\\
(j-J)_t\cdot\nu=0,& x\in \partial B_R,t>0.
\end{cases}
\end{equation}
Multiplying \eqref{3.37} by $(u-U)_t$ and integrating by parts, we have
\begin{equation}\label{3.46}
\begin{split}
\frac{1}{2}\frac{d}{dt}\int_{B_R}(u-U)_t^2=-\int_{B_R}(j-J)_t\nabla(u-U)_t.
\end{split}
\end{equation}
A direct calculation gives
\begin{equation*}
(j-J)_t=\nabla(u-U)_t-\chi (u-U)_t\nabla V-\chi (u-U)_t\nabla\psi-\chi (u-U)\nabla\psi_t-\chi U\nabla\psi_t.
\end{equation*}
It then follows from \eqref{3.46} that
\begin{equation}\label{3.38}
\begin{split}
&\frac{d}{dt}\int_{B_R}(u-U)_t^2+2\int_{B_R}|\nabla(u-U)_t|^2\\
&\leq C\int_{B_R}(|(u-U)_t|^2|\nabla V|^2
+|u-U|^2|\nabla\psi_t|^2+U^2|\nabla\psi_t|^2)\\&
\quad+C\int_{B_R}|\nabla\psi|^2|(u-U)_t|^2+\frac{1}{2}\int_{B_R}|\nabla(u-U)_t|^2.
\end{split}
\end{equation}
Noting $\int_{B_R}(u-U)_tdx=\frac{d}{dt}\int_{B_R}(u-U)dx=0$, when $n=2,3$, it follows from Poincar\'{e}'s inequality that
\[\|(u-U)_t\|_{L^4}\leq C\|\nabla(u-U)_t\|_{L^2},\]
which implies
\[\int_{B_R}|\nabla\psi|^2|(u-U)_t|^2\leq\left(\int_{B_R}|\nabla\psi|^4\right)^{\frac{1}{2}}
\left(\int_{B_R}|(u-U)_t|^4\right)^{\frac{1}{2}}\leq CN(t)^2\int_{B_R}|\nabla(u-U)_t|^2,\]
where we have used $\|\nabla\psi(\cdot,t)\|_{L^4}\leq C\|\psi(\cdot,t)\|_{H^2}\leq CN(t)$. Thus if $N(t)\ll1$,  using the boundedness of $\nabla V$ and $U$, by \eqref{sobolev}, we get from \eqref{3.38} that
\begin{equation*}
\begin{split}
\frac{d}{dt}\int_{B_R}(u-U)_t^2+\int_{B_R}|\nabla(u-U)_t|^2
\leq &C\int_{B_R}(|(u-U)_t|^2
+|\nabla\psi_t|^2).
\end{split}
\end{equation*}
Multiplying this inequality by $e^{\mu t}$ and integrating in $t$, by \eqref{3.35} and Lemma \ref{H1-u}, we have
\begin{equation}\label{3.40}\begin{split}
&e^{\mu t}\int_{B_R}(u-U)_t^2+
\int_0^te^{\mu \tau}\int_{B_R}|\nabla(u-U)_t|^2\\&\leq C(\|\phi_0\|_{L^2}^2+
\|u_0-U\|_{H^1}^2+\|\psi_0\|_{H^2}^2+\|(u-U)_t(\cdot,0)\|_{L^2}^2)\\&\leq C(
\|u_0-U\|_{H^2}^2+\|\psi_0\|_{H^2}^2),\end{split}\end{equation}
where we have used $$\|\phi_0\|_{L^2}^2\leq C\|u_0-U\|_{L^\infty}^2\leq C\|u_0-U\|_{H^2}^2,$$ and
$$\|(u-U)_t(\cdot,0)\|_{L^2}^2\leq C(\|u_0-U\|_{H^2}^2+\|\psi_0\|_{H^2}^2).$$

We next estimate $\|(u-U)(\cdot,t)\|_{H^2}$. Set $h:= ue^{-\chi v}$ and $H:= Ue^{-\chi V}$, then by the first equation of \eqref{trans}, $h$ satisfies
\begin{eqnarray*}
\left\{
\begin{array}{lll}
h_{t}=\Delta h-\chi hv_t+\chi\nabla h\nabla v,& x\in  B_R,\ t>0, \\[1mm]
\frac{\partial h}{\partial \nu}=0,& x\in \partial B_R,\ t>0,\\[1mm]
h(x,0)=u_0(x)e^{-\chi v_0(x)}, & x\in  B_R,
\end{array}
\right.
\end{eqnarray*}
and $(h-H)$ satisfies
\begin{eqnarray}\label{3.54}
\left\{
\begin{array}{lll}
-\Delta (h-H)=-h_{t}-\chi h\psi_t+\chi\nabla (h-H)\nabla V+\chi\nabla (h-H)\nabla\psi+\chi\nabla H\nabla\psi,& x\in  B_R,\\[1mm]
\frac{\partial (h-H)}{\partial \nu}=0,& x\in \partial B_R.
\end{array}
\right.
\end{eqnarray}
Thus,
\begin{equation}\label{h-H}
\int_{B_R}|\Delta(h-H)|^2\leq C\int_{B_R}(h_t^2+h^2\psi_t^2+(|\nabla V|^2+|\nabla \psi|^2)|\nabla (h-H)|^2+|\nabla H|^2|\nabla \psi|^2).
\end{equation}
A direct calculation gives
\begin{equation}\label{3.42}
h-H=(u-U)e^{-\chi(V+\psi)}+Ue^{-\chi V}(e^{-\chi\psi}-1),
\end{equation}
which implies
\[\|h\|_{L^\infty}\leq \|h-H\|_{L^\infty}+\|H\|_{L^\infty}\leq C,\ \|\nabla(h-H)\|_{L^2}^2\leq C(\|u-U\|_{H^1}^2+\|\psi\|_{H^1}^2),\]
and
\[\|h_t\|_{L^2}^2\leq C(\|(u-U)_t\|_{L^2}^2+\|\psi_t\|_{L^2}^2).\]
By H\"{o}lder's inequality and the Sobolev imbedding theorem for $n=2,3$, we have
\[\int_{B_R}|\nabla \psi|^2|\nabla (h-H)|^2\leq \|\nabla\psi\|_{L^4}^2\|\nabla(h-H)\|_{L^4}^2\leq C\|\psi\|_{H^2}^2\|h-H\|_{H^2}^2\leq CN(t)^2\|h-H\|_{H^2}^2.\]
It then follows from \eqref{h-H} that
\begin{equation}\label{3.48}
\|\Delta(h-H)\|_{L^2}^2\leq C(\|(u-U)_t\|_{L^2}^2+\|\psi_t\|_{L^2}^2+\|u-U\|_{H^1}^2+\|\psi\|_{H^1}^2)+CN(t)^2\|h-H\|_{H^2}^2.\end{equation}
As in \eqref{ell-regular}, by the standard regularity theory for the Poisson equation, we get from \eqref{3.48} and \eqref{3.42} that
\[\begin{split}
\|h-H\|_{H^2}^2&\leq C(\|\Delta(h-H)\|_{L^2}^2+\|h-H\|_{L^2}^2)\\&\leq C(\|(u-U)_t\|_{L^2}^2+\|\psi_t\|_{L^2}^2+\|u-U\|_{H^1}^2+\|\psi\|_{H^1}^2)\\&\quad+CN(t)^2\|h-H\|_{H^2}^2+C\|h-H\|_{L^2}^2\\
&\leq C(\|(u-U)_t\|_{L^2}^2+\|\psi_t\|_{L^2}^2+\|u-U\|_{H^1}^2+\|\psi\|_{H^1}^2+N(t)^2\|h-H\|_{H^2}^2).\end{split}\]
Thus, if $N(t)\ll1$, then we have
\begin{equation}\label{3.43}
\|h-H\|_{H^2}^2\leq C(\|(u-U)_t\|_{L^2}^2+\|\psi_t\|_{L^2}^2+\|u-U\|_{H^1}^2+\|\psi\|_{H^1}^2).\end{equation}
By \eqref{3.42}, we also get
\begin{equation}\label{3.59}
u-U=e^{\chi(V+\psi)}(h-H)-Ue^{\chi \psi}(e^{-\chi\psi}-1),
\end{equation}
which in combination with \eqref{3.43} leads to
\[\|u-U\|_{H^2}^2\leq C(\|h-H\|_{H^2}^2+\|\psi\|_{H^2}^2)\leq C(\|(u-U)_t\|_{L^2}^2+\|\psi_t\|_{L^2}^2+\|u-U\|_{H^1}^2+\|\psi\|_{H^2}^2).\]
It then follows from \eqref{3.40}, \eqref{3.35} and Lemmas \ref{L2-u} and  \ref{H1-u}-\ref{H2-psi} that
\begin{equation}\label{3.50}
e^{\mu t}\|u-U\|_{H^2}^2\leq C(\|u_0-U\|_{H^2}^2+\|\psi_0\|_{H^2}^2).\end{equation}
As in \eqref{3.49}, by the regularity theory of elliptic equation for \eqref{3.54}, we are led to
\begin{equation*}
\int_0^te^{\mu \tau}\|(h-H)(\cdot,\tau)\|_{H^3}^2\leq C\int_0^te^{\mu \tau}(\|h_t\|_{H^1}^2+\|\psi_t\|_{H^1}^2+\|h-H\|_{H^2}^2+\|\psi\|_{H^3}^2).\end{equation*}
Thanks to \eqref{3.59}, we further have
\begin{equation}\label{3.60}
\begin{split}
\int_0^te^{\mu \tau}\|(u-U)(\cdot,\tau)\|_{H^3}^2&\leq C\int_0^te^{\mu \tau}(\|(u-U)_t\|_{H^1}^2+\|\psi_t\|_{H^1}^2+\|u-U\|_{H^2}^2+\|\psi\|_{H^3}^2)\\
&\leq C(\|u_0-U\|_{H^2}^2+\|\psi_0\|_{H^2}^2),\end{split}\end{equation}
where we have used \eqref{3.21}, \eqref{3.35}, \eqref{3.40} and Lemmas \ref{L2}-\ref{H2-psi}. The desired estimate \eqref{3.45} follows from \eqref{3.50} and \eqref{3.60}.
\end{proof}


\begin{proof}[Proof of Theorem \ref{v-stability}.]
First note that Proposition \ref{aprioriestimate} is a direct consequence of Lemmas \ref{H2-psi}-\ref{H2-u}. The \emph{a priori} estimate \eqref{eqn3.4} guarantees that if $\|(u_0-U,v_0-V)\|_{H^2}$ is small enough, then $N(t)$ is small for all $t>0$. Therefore, applying the standard extension argument, we obtain the global well-posedness of the system \eqref{trans} in $\C([0,+\infty);H^2(B_R))$. Moreover, the estimate \eqref{eqn3.4} also implies the exponential stability of the steady state $(U,V)$ in $H^2(B_R)$. By the Sobolev imbedding theorem, we further have
\[\|(u-U)(\cdot,t)\|_{L^\infty}\leq C\|(u-U)(\cdot,t)\|_{H^2}\leq C\|u_0-U\|_{H^2}.\] Thus,
\[u(x,t)\geq U(x)-C\|u_0-U\|_{H^2}\geq U_{\min}-C\|u_0-U\|_{H^2}>0,\] provided that $\|u_0-U\|_{H^2}<\delta_2:=\frac{U_{\min}}{C}$. This completes the proof.
\end{proof}
\medskip

\begin{proof}[Proof of Theorem \ref{Thm-stability}.] We complete the proof in two steps.

\emph{Step 1.} We first show the uniqueness of solutions for the system \eqref{KS} in the space $Y_T$ defined by
\[Y_T:=\{(u,w)(x,t)\ | \ u(x,t)>0, w(x,t)>0, (u,w)\in \C([0,T];H^2(B_R))\}.\]
Without loss of generality, we only consider the $n=3$ case. Suppose that in $Y_T$ the system \eqref{KS} has two solutions $(u_1,w_1)$ and $(u_2,w_2)$  with the same initial data $(u_0,w_0)(x)$. Set $\Phi:=u_1-u_2$ and $\Psi:=w_1-w_2$. Then a direct calculation yields the equations of $(\Phi,\Psi)$
\begin{eqnarray}\label{unique}
\left\{
\begin{array}{lll}
\Phi_{t}=\Delta\Phi-p\nabla\cdot\left(\frac{u_2}{w_1}\nabla \Psi-\frac{u_2\nabla w_2}{w_1w_2}\Psi+\frac{\nabla w_1}{w_1}\Phi\right),& x\in  B_R,\ t>0, \\[1mm]
\Psi_{t}=\varepsilon\Delta \Psi-w_1\Phi-u_2\Psi,& x\in  B_R,\ t>0, \\[1mm]
\Phi(x,0)=\Psi(x,0)=0, & x\in  B_R, \\[1mm]
\frac{\partial \Phi}{\partial \nu}-p (\frac{u_2}{w_1}\frac{\partial \Psi}{\partial\nu}-\frac{u_2\Psi}{w_1w_2}\frac{\partial w_2}{\partial\nu}+\frac{\Phi}{w_1}\frac{\partial w_1}{\partial\nu})=0, \ \Psi=0, & x\in \partial B_R,\ t>0.
\end{array}
\right.
\end{eqnarray}
Multiplying the first equation of \eqref{unique} by $\Phi$ and integrating the equation on $B_R$, we get
\begin{equation}\label{F1}
\begin{split}
&\frac{1}{2}\frac{d}{dt}\int_{B_R}|\Phi|^{2}+\int_{B_R} |\nabla\Phi|^2\\&
=p\int_{B_R} \frac{u_2}{w_1}\nabla \Psi\nabla\Phi-p\int_{B_R}\frac{u_2\nabla w_2}{w_1w_2}\Psi\nabla\Phi+p
\int_{B_R}\frac{\nabla w_1}{w_1}\Phi\nabla\Phi\\
&\leq\frac{1}{8}\int_{B_R} |\nabla\Phi|^2+Cp^2\int_{B_R} \frac{u_2^2}{w_1^2}|\nabla \Psi|^2+Cp^2\int_{B_R}\frac{u_2^2|\nabla w_2|^2}{w_1^2w_2^2}\Psi^2+Cp^2\int_{B_R}\frac{|\nabla w_1|^2}{w_1^2}\Phi^2.
\end{split}
\end{equation} For convenience, set $Q_T:=B_R\times(0,T)$, $A_i(T):=\underset{(x,t)\in Q_T}{\min}w_i^2$  and $C_i(T):=\|(u_i,w_i)\|_{C([0,T];H^2)}^2$ for $i=1,2$.
By the H\"{o}lder's inequality and the Sobolev inequality, we get
\[\int_{B_R} \frac{u_2^2}{w_1^2}|\nabla \Psi|^2\leq\frac{\|u_2\|_{L^\infty(Q_T)}^2}{\underset{(x,t)\in Q_T}{\min}w_1^2}\int_{B_R} |\nabla \Psi|^2\leq\frac{CC_2(T)}{A_1(T)}\int_{B_R} |\nabla \Psi|^2.\]
Similarly,
\[\begin{split}
\int_{B_R}\frac{u_2^2|\nabla w_2|^2}{w_1^2w_2^2}\Psi^2&\leq\frac{CC_2(T)}{A_1(T)A_2(T)}\left(\int_{B_R}|\nabla w_2|^4\right)^{\frac{1}{2}}
\left(\int_{B_R}|\Psi|^4\right)^{\frac{1}{2}}\\
&\leq\frac{CC_2(T)}{A_1(T)A_2(T)}\|w_2\|_{H^2}^2\int_{B_R} |\nabla\Psi|^2\\
&\leq\frac{CC_2^2(T)}{A_1(T)A_2(T)}\int_{B_R} |\nabla\Psi|^2.\end{split}\]
By the interpolation inequality, one has
\[\begin{split}
Cp^2\int_{B_R}\frac{|\nabla w_1|^2}{w_1^2}\Phi^2&\leq\frac{Cp^2}{A_1(T)}\left(\int_{B_R}|\nabla w_1|^4\right)^{\frac{1}{2}}
\left(\int_{B_R}|\Phi|^4\right)^{\frac{1}{2}}\\
&\leq\frac{Cp^2}{A_1(T)}\|w_1\|_{H^2}^2\|\Phi\|_{L^2}^{\frac{1}{2}}\|\Phi\|_{L^6}^{\frac{3}{2}}\\
&\leq\frac{Cp^2C_1(T)}{A_1(T)}\|\Phi\|_{L^2}^{\frac{1}{2}}\|\Phi\|_{H^1}^{\frac{3}{2}}\\
&\leq\frac{3}{4}\int_{B_R} |\nabla\Phi|^2+\left(\frac{Cp^8C_1^4(T)}{A_1^4(T)}+\frac{Cp^2C_1(T)}{A_1(T)}\right)\int_{B_R}|\Phi|^{2}.\end{split}\]
It hence follows from \eqref{F1} that
\begin{equation}\label{F2}
\begin{split}
\frac{d}{dt}\int_{B_R}|\Phi|^{2}+\frac{1}{4}\int_{B_R} |\nabla\Phi|^2\leq C(T)\int_{B_R} |\nabla\Psi|^2+C(T)\int_{B_R}\Phi^2.
\end{split}
\end{equation}
Multiplying the second equation of \eqref{unique} by $2\Psi$ and integrating the equation on $B_R$, we get
\begin{equation}\label{F3}
\begin{split}
\frac{d}{dt}\int_{B_R}|\Psi|^{2}+2\varepsilon\int_{B_R} |\nabla\Psi|^2&=-2\int_{B_R}w_1\Phi\Psi-2\int_{B_R}u_2\Psi^2\\&
\leq\|w_1\|_{L^\infty(Q_T)}\int_{B_R} |\Phi|^2+\|w_1\|_{L^\infty(Q_T)}\int_{B_R} |\Psi|^2\\
&\leq CC_1(T)\int_{B_R} (|\Phi|^2+ |\Psi|^2).
\end{split}
\end{equation}
Multiplying \eqref{F3} by a large constant $K$ and combining the equation with \eqref{F2}, we obtain
\[\frac{d}{dt}\int_{B_R}(|\Phi|^2+ K|\Psi|^2)\leq C(T)\int_{B_R}(|\Phi|^2+ K|\Psi|^2).\]
Noting $(\Phi,\Psi)(x,0)=(0,0)$, it follows from the Gronwall's inequality that
\[\int_{B_R}(|\Phi|^2+ K|\Psi|^2)=0 \text{ for any } t\in[0,T],\] which implies
\[(\Phi,\Psi)(x,t)\equiv(0,0) \text{ on } Q_T,\]
and hence $(u_1,w_1)(x,t)\equiv(u_2,w_2)(x,t)$ on $Q_T$.

\emph{Step 2.} We next construct a global solution for the system \eqref{KS} in $Y_T$. Recall that $v_0$ and $w_0$ satisfy
\begin{equation}\label{F4}
v_0-V=\ln\left(1+\frac{w_0-W}{W}\right).\end{equation}
By the Sobolev imbedding theorem, it holds that $\|w_0-W\|_{L^\infty}\leq C\|w_0-W\|_{H^2}$. Then with $\underset{x\in \overline{B_R}}{\min}W(x)>0$, when $\|w_0-W\|_{H^2}\ll1$, by Taylor's formula, we get from \eqref{F4} that
\[\|v_0-V\|_{H^2}\leq C_0\|w_0-W\|_{H^2}.\]
Now we take $\delta_1=\frac{\delta_2}{1+C_0}$, where $\delta_2$ is the constant obtained in Theorem \ref{v-stability}. If $(u_0,w_0)$ satisfies
\[\|(u_0-U,w_0-W)\|_{H^2}\leq\delta_1,\]
then the initial value of the system \eqref{trans}  satisfies
\[\|(u_0-U,v_0-V)\|_{H^2}\leq(1+C_0)\|(u_0-U,w_0-W)\|_{H^2}\leq (1+C_0)\delta_1=\delta_2.\]
Thus, by Theorem \ref{v-stability}, the system \eqref{trans} has a unique global radial solution $(u,v)$ satisfying $u(x,t)>0$, $(u,v)\in\C([0,+\infty);H^2(B_R))$ and the exponential convergence \eqref{v-convergence}. Now we define $w(x,t)=e^{v(x,t)}$, then it is easy to verify that $(u,w)\in\C([0,+\infty);H^2(B_R))$ is a solution of
the original system \eqref{KS}. Moreover, since $w-W=e^v-e^V=e^V(e^\psi-1)$ due to \eqref{anti}, we further have by the Taylor's formula again
\[\|w-W\|_{H^2}^2\leq C\|\psi\|_{H^2}^2\leq Ce^{-\mu t}(\|u_0-U\|_{H^2}^2+\|\psi_0\|_{H^2}^2),\]
which, along with the convergence of $(u-U)$ in Theorem \ref{v-stability}, gives the estimate \eqref{3.1}. We complete the proof.
\end{proof}

\vspace{4mm}

 \section*{Acknowledgement} 
JAC was supported by the Advanced Grant Nonlocal-CPD (Nonlocal PDEs for Complex Particle Dynamics: Phase Transitions, Patterns and Synchronization) of the European Research Council Executive Agency (ERC) under the European Union’s Horizon 2020 research and innovation programme (grant agreement No. 883363).
JAC was also partially supported by the EPSRC grant number EP/V051121/1 and by the “Maria de Maeztu” Excellence Unit IMAG, reference CEX2020-001105-M, funded by MCIN/AEI/10.13039/501100011033/.
The research of J. Li was supported by the National Science Foundation of China (No. 12371216) and the Natural Science Foundation of Jilin Province (No. 20210101144JC).  The research of Z.A. Wang was supported in part by the Hong Kong RGC GRF grant No. 15306121 and an internal grant ZZPY.  The research of W. Yang was supported by National Key R\&D Program of China 2022YFA1006800, NSFC No.12171456, NSFC No.12271369, FDCT No.0070/2024/RIA1, No. MYRG-GRG2024-00082-FST and No. SRG2023-00067-FST.

\bibliographystyle{plain}

\end{document}